\newtheorem{theorem}{Theorem}[section]
\newtheorem{lemma}[theorem]{Lemma}
\newtheorem{question}[theorem]{Question}
\newtheorem{definition}[theorem]{Definition}
\newtheorem{conjecture}[theorem]{Conjecture}
\newtheorem{corollary}[theorem]{Corollary}
\newtheorem{proposition}[theorem]{Proposition}
\def\eps{\varepsilon}
\def\om{\omega}
\def\GL{\mathrm{GL}}
\def\SL{\mathrm{SL}}
\def\PSL{\mathrm{PSL}}
\def\C{\mathbf{C}}
\def\Z{\mathbf{Z}}
\def\kk{\mathbf{k}}
\def\F{\mathbf{F}}
\def\Q{\mathbf{Q}}
\def\C{\mathbf{C}}
\def\R{\mathbf{R}}
\def\un{\mathbf{1}}
\def\End{\mathrm{End}}
\def\Ker{\mathrm{Ker}\,}
\def\rk{\mathrm{rk}}
\def\Imm{\mathrm{Im}}
\def\Hom{\mathrm{Hom}}
\def\onto{\twoheadrightarrow}
\def\Id{\mathrm{Id}\,}
\def\ii{\mathrm{i}}
\def\la{\lambda}
\date{November 14, 2016.}
\newcounter{braid}
\newcounter{strands}
\def\cross{%
  \@ifnextchar^{\message{Got sup}\cross@sup}{\cross@sub}}
\def\cross@sup^#1_#2{\render@cross{#2}{#1}}
\def\cross@sub_#1{\@ifnextchar^{\cross@@sub{#1}}{\render@cross{#1}{1}}}
\def\cross@@sub#1^#2{\render@cross{#1}{#2}}
\def\render@cross#1#2{
  \def\strand{#1}
  \def\crossing{#2}
  \pgfmathsetmacro{\cross@y}{-\value{braid}*\braid@h}
  \pgfmathtruncatemacro{\nextstrand}{#1+1}
  \foreach \thread in {1,...,\value{strands}}
  {
    \pgfmathsetmacro{\strand@x}{\thread * \braid@w}
    \ifnum\thread=\strand
    \pgfmathsetmacro{\over@x}{\strand * \braid@w + .5*(1 - \crossing) * \braid@w}
    \pgfmathsetmacro{\under@x}{\strand * \braid@w + .5*(1 + \crossing) * \braid@w}
    \draw[braid] \pgfkeysvalueof{/tikz/braid start} +(\under@x pt,\cross@y pt) to[out=-90,in=90] +(\over@x pt,\cross@y pt -\braid@h);
    \draw[braid] \pgfkeysvalueof{/tikz/braid start} +(\over@x pt,\cross@y pt) to[out=-90,in=90] +(\under@x pt,\cross@y pt -\braid@h);
    \else
    \ifnum\thread=\nextstrand
    \else
     \draw[braid] \pgfkeysvalueof{/tikz/braid start} ++(\strand@x pt,\cross@y pt) -- ++(0,-\braid@h);
    \fi
   \fi
  }
  \stepcounter{braid}
}
\tikzset{braid/.style={double=\pgfkeysvalueof{/tikz/braid colour},double distance=1pt,line width=2pt,white}}
\newcommand{\braid}[2][]{%
  \begingroup
  \pgfkeys{/tikz/strands=2}
  \tikzset{#1}
  \pgfkeysgetvalue{/tikz/braid width}{\braid@w}
  \pgfkeysgetvalue{/tikz/braid height}{\braid@h}
  \setcounter{braid}{0}
  \let\dsigma=\cross
  #2
  \endgroup
}
\begin{document}
\centerline{}

\title{Artin groups and Yokonuma-Hecke algebras}
\author[I.~Marin]{Ivan Marin}
\address{LAMFA, Universit\'e de Picardie-Jules Verne, Amiens, France}
\email{ivan.marin@u-picardie.fr}
\medskip

\begin{abstract} 
We attach to every Coxeter system $(W,S)$ 
an extension $\mathcal{C}_W$ of the corresponding Iwahori-Hecke
algebra. 
We construct a 1-parameter family of (generically surjective) morphisms from
the group algebra of the corresponding Artin group onto $\mathcal{C}_W$. When $W$ is finite,
we prove that this algebra is a free module of finite rank which is generically semisimple. When $W$ is the Weyl group of a Chevalley group, $\mathcal{C}_W$ naturally maps to the associated Yokonuma-Hecke algebra.  When $W = \mathfrak{S}_n$ this algebra can be identified with a diagram algebra called the algebra of `braids and ties'. The image of the usual braid group in this case is investigated. Finally,
we generalize our construction to finite complex reflection groups, thus extending the Brou\'e-Malle-Rouquier construction of a generalized Hecke algebra attached to these groups. \\
{\bf MSC 2010:} 20F36; 20F55; 20C08.
\end{abstract}

\maketitle

\tableofcontents

\section{Introduction}

Let $(W,S)$ be a Coxeter system, as in \cite{BOURB456}, and $m_{st} \in \Z_{\geq 2} \cup \{ \infty\}$ denote the order of $st$ for $s,t \in S$. Three objects are classically attached
to it : another group, the Artin group $B$ defined by a presentation made of `braid relations',
$$
\langle S \ | \underbrace{sts\dots}_{m_{st}} = \underbrace{tst\dots}_{m_{st}}\ \ \ \forall  s,t \in S \rangle,
$$
a monoid $B^+$ of positive braids defined by the same presentation, and
an algebra, called the Iwahori-Hecke algebra. This algebra $H_W$ 
is defined over a ring $\kk$ containing elements $u_s,s\in S$ subject to the condition $u_s = u_t$ if $s,t$ both lie in the same
conjugacy class, as the quotient of the monoid algebra $\kk B^+$ by the relations
$(s-1)(s+u_s) = 0$ for $s \in S$. It is a deformation of the group algebra of $W$, obtained by the
specialization at $u_s=1$. When $W$ is the Weyl group of some reductive group, $H_W$
admits a natural interpretation as a convolution algebra. The specialization at $u_s = -1$ of $H_W$
admits a natural central extension which is also a quotient of $\kk B$, recently defined in \cite{TRBMW}.

In this paper we define another natural object,
a $\kk$-algebra $\mathcal{C}_W$ which is an
extension of $H_W$, and admits a 1-parameter family of morphisms $B \to \mathcal{C}_W$.
This algebra admits generators $g_s,e_s, s \in S$ and is defined by generators  and relations in section
\ref{sect:genconstruction}.
We prove (see theorem \ref{thm:main}) that it is a free $\kk$-module.
When $W$ is finite, we show that $\mathcal{C}_W$
has rank $|W|.Bell(W)$, where $Bell(W)$ is a natural generalization of the Bell number
$Bell_n$ of partitions of a set of $n$ elements, namely the number of reflection subgroups of $W$.
Precisely, in the general case a basis of $\mathcal{C}_W$ is naturally indexed by couples $(w,W_0)$ for $w \in W$ and $W_0$ a finitely-generated reflection subgroup of $W$.

The original motivation for this algebra comes from an analysis of the so-called Yokonuma-Hecke
algebra associated to a Chevalley group $G$ and its unipotent radical $\mathfrak{U}$,
namely the Hecke convolution ring $\mathcal{H}(G,\mathfrak{U})$, defined by Yokonuma in \cite{YOKO}. Assume $W$ is the Weyl group of $G$,
with generating set $S$. Part of the natural generators of this algebra
are directly connected to the structure of the torus, while the other ones
are in 1-1 correspondence with $S$ and satisfy braid relations, together with a quadratic relation also involving elements of the torus.
In \cite{JUYUNOUVEAUX}, using a Fourier transform construction, J. Juyumaya introduced other natural `braid' generators $g_s, s \in S$, for which
the quadratic relation now involves some idempotent $e_s$ (in which is `hidden' a linear
combinations of elements of the torus). Therefore, there is a natural subalgebra
generated by the $g_s,e_s$, and a natural question is to find a presentation for this
subalgebra, at least when the field of definition of $G$ is generic enough.
The algebra $\mathcal{C}_W$ that we introduce provides an answer to that question. More precisely,
a better answer is a natural quotient $\mathcal{C}_W^R$ of $\mathcal{C}_W$ where reflection subgroups,
in natural 1-1 correspondence with root subsystems, are identified if they
have the same closure (see section \ref{sect:meaning}).

Although one is, at least since Tits's classical article \cite{TITS}, somewhat accustomed to such a phenomenon, it remains
surprising that once again such an object arising from reductive groups admits a natural generalization
to arbitrary Coxeter groups. This algebra $\mathcal{C}_W$ can be viewed as a deformation
of the semidirect product $\mathcal{C}_W(1)$ of the group algebra of $W$ with a commutative algebra spanned by the collection of finitely generated reflection subgroups of $W$. We show in theorem \ref{thm:semisimple} that,
when $W$ is finite and under obvious conditions on the characteristic, this algebra $\mathcal{C}_W(1)$ is semisimple, and therefore $\mathcal{C}_W$ is generically semisimple. For $W = \mathfrak{S}_n$ this generalizes and provides a more
direct proof of a result of \cite{BANJO}. Actually, we show that in the case $W = \mathfrak{S}_n$ and in characteristic $0$, the algebra $\mathcal{C}_W(1)$ is \emph{split} semisimple. 
The question about a similar statement for other Weyl groups raises new problems on the normalizers of reflection and parabolic subgroups in finite Weyl groups
(see section  \ref{sect:spectss}).

In section \ref{sect:braidimage} we introduce a family of morphisms $\Psi_{\underline{\la}} : \kk B \to \mathcal{C}_W(\underline{u})$ and
we exhibit an unexpected connection between the quotient of the group algebra of the braid group appearing inside the Yokonuma-Hecke algebra of type $A$ and (a specialization of) the one connected
with the Links-Gould polynomial invariant of knots and links. We are then able to deduce from Ishii's work on the Links-Gould invariant a new relation inside the Yokonoma-Hecke algebra. Amusingly enough, we notice that Ishii's work and Juyumaya's work on these previously unrelated topics appeared following each other in the same issue of the same
journal (see \cite{JUYU,ISHII}).

A natural question is 
whether the natural map $B \to \mathcal{C}_W(\underline{u})$
is injective. 
Since there is a natural (surjective) map $\mathcal{C}_W(\underline{u})
\to H_W(\underline{u})$, this would be the case if the induced map $B \to
 H_W(\underline{u})$ was itself injective. 
 Right know, this is an open question,
 settled (positively) only in rank 2, by work of Squier \cite{SQUIER},
and an alternative proof can be found in \cite{LEHXI}. 
Our question
of whether $B \to \mathcal{C}_W(\underline{u})$ is injective therefore may or may not be a consequence of the solution of this one. 
A possibly easier question is whether the (restriction to $B$ of the)  maps $\Psi_{\underline{\la}}$ are injective
for generic $\underline{\la}$. We show in section \ref{monoidrepresentation} that a simpleminded application of the existing methods does not suffice to conclude on this point. They however incite to define and look
at a new \emph{monoid} representation $B^+ \to \mathcal{C}_W$ with positive coefficients.

In the last section, we show that the natural quotient $\mathcal{C}_W^p$ of $\mathcal{C}_W$,
where reflection subgroups are identified if they have the same parabolic closure,
can be generalized to the setting where $W$ is a finite complex reflection group, in such
a way that $\mathcal{C}_W^p$ is a natural extension of the generalized Hecke algebra
$H_W$ associated to $W$ by Brou\'e, Malle and Rouquier in \cite{BMR}. The main conjecture
on $H_W$, that $H_W$ is a free module of finite rank, is naturally extended to
an a priori stronger conjecture on $\mathcal{C}_W^p$, that we prove to be true for
a couple of cases. In particular we prove this conjecture for $W$ the complex reflection group of
monomial $n \times n$ matrices with coefficients $d$-th roots of $1$, which provides
a natural extension of the so-called Ariki-Koike algebra.

As a conclusion, we wonder whether other classical objects attached to Iwahori-Hecke algebras, like Kazhdan-Lusztig bases and Soergel bimodules,
can be naturally extended to this setting. In particular it would be interesting to construct an extension of Lusztig's isomorphism of \cite{LUSZ} to $\mathcal{C}_W$.
We also consider very likely that the whole machinery
of Cherednik algebras, including the so-called KZ functor, can be generalized in a natural way to our `extended'
setting. We leave this to future work.

\medskip

{\bf Acknowledgements.} I thank R. Abdellatif, S. Bouc, C. Cornut, T. Gobet, K. Sorlin, R. Stancu and
especially F. Digne and J.-Y. Hée for discussions on root systems and Coxeter groups. I thank A. Esterle
for a careful reading of a first draft. 
\section{Preliminaries}

\subsection{Yokonuma-Hecke algebra}

\label{sect:YHvintage}
Following Yokonuma's original paper \cite{YOKO}, we use Chevalley's notation as in \cite{CHEVALLEY}.
Let $G$ be the Chevalley group associated to a semi-simple complex
Lie algebra $\mathfrak{g}$ and to a finite field $K=\F_q$ and $\mathfrak{H},\mathfrak{W}
,\mathfrak{U}\subset G$ as in \cite{CHEVALLEY}. 
In modern terms, $G$ is a split simple Lie group of adjoint type over $\F_q$,
$\mathfrak{H}$ is a fixed maximal torus, $\mathfrak{U}$ the unipotent
radical of a fixed Borel subgroup containing $\mathfrak{H}$, and $\mathfrak{W}$ is the normalizer of $\mathfrak{H}$ in $G$. In general, the correspondence with modern notations is explained in \cite{CARTER,STEINBERG}. 

To each root $\alpha$ of $\mathfrak{g}$
we let $\varphi_{\alpha} : SL_2(K) \to G$ denote the associated morphism, and
$$
h_{\alpha,t} = \varphi_{\alpha} \left(\begin{array}{cc} t & 0 \\ 0 & t^{-1} \end{array}\right)\ \ 
\om_{\alpha} = \varphi_{\alpha} \left(\begin{array}{cc} 0 & 1 \\ -1 & 0\end{array} \right)\ \ 
$$
Choosing a system $\alpha_1,\dots, \alpha_l$ of simple roots,
we let $\om_i = \om_{\alpha_i}$. There is a short exact sequence $1 \to \mathfrak{H}
\to \mathfrak{W} \to W \to 1$, where $W$ is the corresponding Weyl group.
Each $\om_{\alpha}$ is mapped in $W$ to the reflection $s_{\alpha}$ associated to $\alpha$. The Weyl group admits a presentation as Coxeter system $(W,S)$ with $S = \{ s_1,\dots,
s_l\}$ in 1-1 correspondence with the set of simple roots under $s_i =s_{\alpha_i}\leftrightarrow \alpha_i$. 
The subgroup $\mathfrak{H}$ is generated by the $h_{\alpha,t}$. For short, we let
$h_{i,t} = h_{\alpha_i,t}$. In \cite{CHEVALLEY}, Chevalley denotes $h_{\alpha}$ the
coroot $\check{\alpha}$ associated to $\alpha$. In order to facilitate cross-references
between \cite{CHEVALLEY} and \cite{BOURB456} we will use both notations : $h_{\alpha} = \check{\alpha}$.
The maximal torus $\mathfrak{H}$ is described in \cite{CHEVALLEY}
as the image of $\Hom(L,K^{\times})$, where $L$ is the root lattice,
under the map $\chi \mapsto h(\chi)$ where $h(\chi)$ is an automorphism of the associated complex Lie algebra $\mathfrak{g}$ acting trivially on the Cartan subalgebra and by
$h(\chi)X_r = \chi(r)X_r$ on the generator associated to the root $r$. With these notations,
$h_{\alpha,t} = h(\chi_{\alpha,t})$ where $\chi_{\alpha,t}(r) = t^{r(h_{\alpha})} = t^{ r(\check{\alpha})}$.

In
\cite{YOKO}, th\'eor\`eme 3, T. Yokonuma proves that the Hecke ring $\mathcal{H}(G,\mathfrak{U})$ over $\Z$
admits a presentation by generators $a(h), h \in \mathfrak{H}$,
$a_1,\dots,a_l$ and relations
\begin{enumerate}
\item $a(h_1) a(h_2) = a(h_1h_2)$ for all $h_1,h_2 \in \mathfrak{H}$
\item $a_i a(h) = a(h') a_i$, where $h' = \om_i h \om_i^{-1}$
\item $a_i^2 = q a(h_i) + \sum_{t \in K^{\times}} a(h_{i,t}) a_i$ where $h_i = \om_i^2$
\item $\underbrace{a_i a_j a_i \dots }_{m_{ij}} =\underbrace{a_j a_i a_j \dots }_{m_{ij}}$ for $1 \leq i,j \leq l$
\end{enumerate}

Let $\tilde{e}_i = \sum_{t \in K^{\times}} a(h_{i,t})$ and, in general,
$\tilde{e}_{\alpha} = \sum_{t \in K^{\times}} a(h_{\alpha,t})$. 

The following proposition
is crucial for us. Parts (1) and (2) are standard, parts (3) and (4) appear to be new, at least in
the general case.

\begin{proposition} {\ } \label{prop:relsidsHGU}
\begin{enumerate}
\item For every root $\alpha$, we have $\tilde{e}_{\alpha}^2 = (q-1) \tilde{e}_{\alpha}$,
and $\tilde{e}_{-\alpha} = \tilde{e}_{\alpha}$
\item For every two roots $\alpha, \beta$, we have $\tilde{e}_{\alpha} \tilde{e}_{\beta} = \tilde{e}_{\beta} \tilde{e}_{\alpha}$
\item For every two roots $\alpha, \beta$, we have $\tilde{e}_{\alpha} \tilde{e}_{\beta} = \tilde{e}_{\alpha} \tilde{e}_{s_{\alpha}(\beta)}$
\item For every two roots $\alpha,\beta$, if $\gamma$ is a root such that $\check{\gamma} =  \check{\alpha} + \check{\beta}$, then $\tilde{e}_{\alpha}\tilde{e}_{\gamma} = \tilde{e}_{\alpha} \tilde{e}_{\beta}$.
\end{enumerate}
\end{proposition}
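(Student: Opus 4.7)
The plan is to use throughout the two key facts from Chevalley's description: $a(h_1)a(h_2)=a(h_1h_2)$ (so $\tilde e_\alpha$ behaves like a ``character sum'' on the torus), together with the multiplicative formula $h_{\alpha,t}=h(\chi_{\alpha,t})$ where $\chi_{\alpha,t}(r)=t^{r(\check\alpha)}$. The latter gives the multiplicativity in the index, namely $h_{\alpha,t}h_{\alpha,t'}=h_{\alpha,tt'}$, and more generally allows one to translate relations in the coroot lattice into relations among the $h_{\alpha,t}$.

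For (1), I expand $\tilde e_\alpha^2=\sum_{s,t\in K^\times}a(h_{\alpha,st})$ and collect by the value $u=st$: for each $u\in K^\times$ there are exactly $q-1$ pairs $(s,t)$ with $st=u$, giving $(q-1)\tilde e_\alpha$. For $\tilde e_{-\alpha}=\tilde e_\alpha$ I use $\check{(-\alpha)}=-\check\alpha$, which yields $h_{-\alpha,t}=h_{\alpha,t^{-1}}$, and then reindex by $t\mapsto t^{-1}$ in the sum defining $\tilde e_{-\alpha}$. Part (2) is immediate, since $\mathfrak H$ is abelian and the elements $a(h)$ multiply via the group law on $\mathfrak H$.

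For (3), the point is the identity $\check{s_\alpha(\beta)}=\check\beta-m\check\alpha$ where $m=\langle\alpha,\check\beta\rangle\in\Z$ (standard, $s_\alpha$ acts on the coroot lattice by the usual reflection formula). Feeding this into Chevalley's $\chi$-description gives
\[
h_{s_\alpha(\beta),u}=h_{\beta,u}\,h_{\alpha,u^{-m}}.
\]
Then I compute
\[
\tilde e_\alpha\tilde e_{s_\alpha(\beta)}
=\sum_{t,u\in K^\times}a\bigl(h_{\alpha,t}h_{\beta,u}h_{\alpha,u^{-m}}\bigr)
=\sum_{t,u\in K^\times}a\bigl(h_{\alpha,tu^{-m}}h_{\beta,u}\bigr),
\]
and for each fixed $u$ the substitution $t'=tu^{-m}$ is a bijection of $K^\times$, so the sum equals $\tilde e_\alpha\tilde e_\beta$. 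Part (4) follows the same template: from $\check\gamma=\check\alpha+\check\beta$ and the $\chi$-formula one gets $h_{\gamma,u}=h_{\alpha,u}h_{\beta,u}$, so
\[
\tilde e_\alpha\tilde e_\gamma
=\sum_{t,u}a\bigl(h_{\alpha,tu}h_{\beta,u}\bigr),
\]
and for each fixed $u$ the substitution $t'=tu$ is a bijection $K^\times\to K^\times$, turning the sum into $\tilde e_\alpha\tilde e_\beta$.

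The only point where I expect anything to require care is the coroot bookkeeping in (3): one must be sure that $s_\alpha$ acts on the coroot lattice by the same formula as on the root lattice (up to the swap of $\alpha$ and $\check\alpha$), so that $\check{s_\alpha(\beta)}=s_\alpha(\check\beta)$ with $m=\langle\alpha,\check\beta\rangle$ an integer. Once this is set up, everything reduces to multiplicativity in $\mathfrak H$ and a bijective reindexing of $K^\times$, and (3) and (4) become almost the same computation.
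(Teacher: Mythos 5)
Your proof is correct and follows essentially the same route as the paper: everything is reduced, via the description $h_{\alpha,t}=h(\chi_{\alpha,t})$ with $\chi_{\alpha,t}(r)=t^{r(\check\alpha)}$, to linearity in the coroot lattice plus a bijective reindexing of $K^\times$ (or $(K^\times)^2$), exactly as in the paper's treatment of (1), (3) and (4). The only (harmless) variation is in showing $\tilde e_{-\alpha}=\tilde e_\alpha$, where the paper conjugates by $\omega_\alpha$ and computes $\omega_\alpha h_{\alpha,t}\omega_\alpha^{-1}=h_{\alpha,-t^{-1}}$, while you use $\check{(-\alpha)}=-\check\alpha$ directly to get $h_{-\alpha,t}=h_{\alpha,t^{-1}}$; both are bijective reindexings of the same sum.
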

\begin{proof}
We have $\tilde{e}_{-\alpha} = \sum_{t \in K^{\times}} a(h_{-\alpha,t})
= \sum_{t \in K^{\times}} a(h_{s_{\alpha}(\alpha),t})$.
Since, after \cite{CHEVALLEY},
$\{ h_{s_{\alpha}(\alpha),t} ; t \in K^{\times} \}  = \{ \om_{\alpha} h_{\alpha,t} \om_{\alpha}^{-1} ;  t \in K^{\times} \}$
we have $\tilde{e}_{-\alpha} =\sum_{t \in K^{\times}} a(\om_{\alpha} h_{\alpha,t} \om_{\alpha}^{-1})$. From the definitions we have $\om_{\alpha} h_{\alpha,t} \om_{\alpha}^{-1}= \varphi_{\alpha}\left(\begin{array}{cc} -t^{-1} & 0 \\ 0 & -t \end{array}\right)= h_{\alpha,-t^{-1}}$. Since $t \mapsto -t^{-1}$ is a bijection from $K^{\times}$ to $K^{\times}$ this proves $\tilde{e}_{-\alpha} = \tilde{e}_{\alpha}$. Now,
$$
\tilde{e}_{\alpha}^2 = \sum_{t,u \in K^{\times}} a(h_{\alpha,t} h_{\alpha,u})
= \sum_{t,u \in K^{\times}} a(h_{\alpha,tu}) = \sum_{v \in J^{\times}} (\# \{ (t,u) \in K^{\times} ; tu = v\}) a(h_{\alpha,v}) = (q-1) \tilde{e}_{\alpha}.
$$
and this proves (1). Since $\mathfrak{H}$ is commutative (2) is obvious. We now prove (3),
considering two roots $\alpha,\beta$. If $\beta \in \{\alpha,-\alpha \}$  we get the conclusion from $\tilde{e}_{-\alpha} = \tilde{e}_{\alpha}$. Otherwise, $\beta$ and $\alpha$
are linearly independent. 
Then, with obvious notations, $s_{\check{\alpha}}(\check{\beta})$ is the coroot associated to $s_{\alpha}(\beta)$
in the dual root system. By the elementary properties of root systems we
have $s_{\check{\alpha}}(\check{\beta}) = \check{\beta} + m \check{\alpha}$
for some $m \in \Z$. Then, $\chi_{\alpha,t} : x \mapsto t^{x(h_{s_{\alpha}(\beta)})} =
t^{x(h_{\beta})} t^{m x(h_{\alpha})}$ hence

$$
\tilde{e}_{\alpha} \tilde{e}_{s_{\alpha}(\beta)} = \sum_{t,u \in K^{\times}} a(h_{\alpha,t}
h_{s_{\alpha}(\beta),u})
$$
and by definition (see \cite{CHEVALLEY}) $h_{\alpha,t}h_{s_{\alpha}(\beta),u}$ corresponds 
to the element of $\Hom_{\Z}(L,K^{\times})$
which is given by 
$x \mapsto t^{x(h_{\alpha})} u^{x(h_{\beta})+mx(h_{\alpha})} = (tu^m)^{x(h_{\alpha})} u^{x(h_{\beta})}$.
Therefore $h_{\alpha,t}h_{s_{\alpha}(\beta),u} = h_{\alpha,(tu)^m} h_{\beta,u}$
and

$$
\tilde{e}_{\alpha} \tilde{e}_{s_{\alpha}(\beta)} = \sum_{t,u \in K^{\times}} a(h_{\alpha,(tu)^m})
a(h_{\beta,u} ) = \sum_{t,u \in K^{\times}} a(h_{\alpha,t})
a(h_{\beta,u} ) = \tilde{e}_{\alpha} \tilde{e}_{\beta}
$$
since $(t,u) \mapsto (tu^m,u)$ is a bijection from $(K^{\times})^2$ to itself. This proves (3).
The proof of (4) is similar : we get $\tilde{e}_{\alpha} \tilde{e}_{\gamma}
= \sum_{t,u \in K^{\times} } a(h_{\alpha,t} h_{\gamma,u})$ and
$h_{\alpha,t} h_{\gamma,u}$ corrresponds to $x \mapsto t^{x(h_{\alpha})} u^{x(h_{\alpha}+
h_{\beta})} = (tu)^{x(h_{\alpha})} u^{h_{\beta}}$ and we conclude as before. This
proves the claim.
\end{proof}

The maximal torus $\mathfrak{H}$ can be identified with $(K^{\times})^{l}$
through the identification with
$\Hom(L,K^{\times}) = \Hom(\bigoplus_{i=1}^{l} \Z \alpha_i,K^{\times})
= \prod_{i=1}^l \Hom(\Z\alpha_i,K^{\times}) \simeq (K^{\times})^{l}$.
If $\beta_1,\dots,\beta_k$ are roots, and $t_1,\dots,t_k \in K^{\times}$,then
$a(h_{\beta_1,t_1})a(h_{\beta_2,t_2})\dots a(h_{\beta_k,t_k}) \in \mathfrak{H}$
is identified with the $l$-tuple
$$(t_1^{\alpha_i(\check{\beta}_1)}t_2^{\alpha_i(\check{\beta}_2)}
\dots
t_k^{\alpha_i(\check{\beta}_k)})_{1 \leq i \leq l} \in (K^{\times})^l.
$$
Choosing a generator $\zeta$ of $K^{\times}$, and therefore an isomorphism $K^{\times}
\simeq \Z/(q-1)\Z$, it is identified with the $l$-tuple
$$
\left(\zeta^{\alpha_i(m_1 \check{\beta}_1+\dots + m_k  \check{\beta}_k)}\right)_{1 \leq i \leq l} \in (K^{\times})^l
$$
where $t_j = \zeta^{m_j}, m_j \in \Z/(q-1)\Z$,
and therefore with the $l$-tuple $(\alpha_i(m_1  \check{\beta}_1+\dots + m_k  \check{\beta}_k))_{1 \leq i \leq l} \in (\Z/(q-1)\Z)^l$. Let us now assume that $\beta_1,\dots,\beta_k$ forms
a basis of a root subsystem. Then
$\tilde{e}_{\beta_1}\dots \tilde{e}_{\beta_k}$ is mapped inside $\Z \mathfrak{H} \simeq \Z[(\Z/(q-1)\Z)^l]$
to $\sum_{m_1,\dots m_k \in \Z/(q-1)\Z}  [\alpha_i(m_1  \check{\beta}_1+\dots + m_k  \check{\beta}_k)]_{1\leq
i \leq l} $.
We consider the map $\Phi : (\Z/(q-1)\Z)^{k} \to (\Z/(q-1)\Z)^{l}$
given by $(m_1,\dots, m_k) \mapsto [\alpha_i(m_1  \check{\beta}_1+\dots + m_k  \check{\beta}_k)]_{1\leq
i \leq l}$. It is a $\Z$-module homomorphism, with kernel
the set of $m_1,\dots,m_k$ such that $m_1 \check{\beta}_1 + \dots + m_k  \check{\beta}_k$
lies in the kernel of all $\alpha_i$'s modulo $q-1$. 

Therefore $\tilde{e}_{\beta_1}\dots \tilde{e}_{\beta_k}$
is mapped to 
$$
(\# \Ker \Phi) \sum_{\underline{v} \in \Imm(\Phi)} \underline{v}.
$$
Let $F$ denote the sub-lattice of the co-root lattice spanned by
$\check{\beta}_1,\dots, \check{\beta}_k$, and $C$ the Cartan matrix
of the root system. The values obtained as $\underline{v} \in \Imm(\Phi)$
are exactly the image of $F$ under $C$ modulo $q-1$, and $\Ker \Phi$
depends only on $q-1$, $F$ and $C$. Let $r$ be a prime dividing $q-1$ and
not dividing $\det(C)$. We let $\Phi_r : \F_r^k \to \F_r^{l}$ denote
the reduction of $\Phi$ modulo $r$. Then, under the map $\Z[(\Z/(q-1)\Z)^{l}]\to
\Z[\F_r^{l}]$, $\tilde{e}_{\beta_1}\dots \tilde{e}_{\beta_k}$ is mapped
to
$$
(\# \Ker \Phi_r) \sum_{\underline{v} \in \Imm(\Phi_r)} [\alpha_i(\underline{v})]_{1 \leq i \leq l}.
$$
Since $C$ is invertible modulo $r$, the image $\Imm \Phi_r$ of the lattice $F \mod rL$ under $C$
determines $F \mod r$. Since there is a finite number of possible lattices $F$, there
exists $r_0$ such that, for all prime $r \geq r_0$, the knowledge of $F \mod r L$
determines $F$. Let us choose such a prime number. By the Dirichlet prime number theorem there exists a prime $p=q$ such that $p \equiv 1 \mod r$,
that is $r |q-1$. Therefore, the subalgebra generated by the $\tilde{e}_{\alpha}$
is `generically' freely spanned by a family indexed by the collection of all \emph{closed} symmetric
subsystems of (the dual of) our original subsystem. Recall that there exists
reduced root systems with proper closed symmetric subsystems of the same rank,
for instance the long roots in type $G_2$ form a subsystem of type $A_2$ with
this property.

\subsection{Juyumaya's generators}

In \cite{JUYUNOUVEAUX}, Juyumaya introduced new generators $L_i$'s of $\mathcal{H}(G,U)$ in replacement of the $a_i$'s, keeping the $a(h)$ as they are. Choosing a non trivial additive character $\psi$ of $(K,+)$,
and using some kind of Fourier transform,
he defines for every root $\alpha$ the element
$\psi_{\alpha} = \sum_{r \in K^{\times}} \psi(r) h_{\alpha,r}$. Then, letting
$L_{i} = q^{-1}(\tilde{e}_{\alpha_i} + a_{i} \psi_{\alpha_i})$ he shows,
in collaboration with S. Kannan (\cite{JUYUKANNAN}, theorem 2) that
$\mathcal{H}(G,U)$ admits a presentation with generators $L_1,\dots,L_{l}$,
$a(h), h \in \mathfrak{H}$ and relations
\begin{enumerate}
\item $a(h_1) a(h_2) = a(h_1h_2)$ for all $h_1,h_2 \in \mathfrak{H}$
\item $L_i a(h) = a(h')L_i$ where $h' = \omega_i h \omega_i^{-1}$
\item $L_i^2 = 1 - q^{-1}(\tilde{e}_{\alpha_i} - L_i \tilde{e}_{\alpha_i})$
\item $\underbrace{L_i L_j L_i \dots }_{m_{ij}} =\underbrace{L_j L_i L_j \dots }_{m_{ij}}$ for $1 \leq i,j \leq l$
\end{enumerate}
Then, letting $u = q^{-1}$, $e_{\alpha} = (q-1)^{-1} \tilde{e}_{\alpha}$, $e_i = e_{\alpha_i}$
and $g_i = -L_i$, this presentation becomes the following one :
\begin{enumerate}
\item $a(h_1) a(h_2) = a(h_1h_2)$ for all $h_1,h_2 \in \mathfrak{H}$
\item $g_i a(h) = a(h')g_i$ where $h' = \omega_i h \omega_i^{-1}$
\item $g_i^2 = 1 +(u-1) e_i(1+g_i)$
\item $\underbrace{g_i g_j g_i \dots }_{m_{ij}} =\underbrace{g_j g_i g_j \dots }_{m_{ij}}$ for $1 \leq i,j \leq l$
\end{enumerate}

\subsection{Yokonuma-Hecke algebras of type $A$}

A particularly studied variation of the above construction
mimics the situation above for the (non-semisimple !) reductive group $\GL_n(K)$ with
$K$ a `field of order $d+1$'. Let us fix
a commutative ring $\kk$  (with $1$), $u \in \kk$, $d \in \Z_{>0}$. We assume that $d$ and $u$ are invertible in $\kk$. 
The literature on the subject, see e.g. \cite{CHLOULAMB},  denotes $Y_{d,n}(u)$ and calls the Yokonuma-Hecke algebra of type $A$ the $\kk$-algebra generated with generators
$g_1,\dots,g_{n-1}$, $t_1,\dots,t_n$ and relations
\begin{enumerate}
\item $g_i g_{i+1} g_i = g_{i+1} g_i g_{i+1}$, $g_i g_j = g_j g_i$ if $|j-i| \geq 2$ (braid relations), 
\item $t_i t_j = t_j t_j$, $g_i t_j = t_{s_i(j)} g_i$ for all $i,j$, where $s_i$ is the transposition $(i,i+1)$;
\item $t_i^d = 1$ for all $i$,
\item $g_i^2 = 1 + (u-1)e_i(1+g_i)$
\end{enumerate}
where, by definition $e_i = e_{i,i+1}$ with
$$
e_{i,j} = \frac{1}{d} \sum_{s=0}^{d-1} t_i^st_{j}^{-s}.
$$
whenever $i \neq j$ and $1 \leq i,j \leq n$.
The elements $g_i$ are invertible, with inverse $g_i^{-1} = g_i + (u^{-1}-1)e_i + (u^{-1}-1)e_i g_i$.
It can be easily proved
 that
the following relations hold :
\begin{enumerate}[resume]
\item $e_{ij} = e_{ji}$ for all $i \neq j$
\item $e_{i,j} e_{k,l} = e_{k,l} e_{i,j}$ for all $i\neq j$, $k \neq l$
\item $g_i e_{j,k}  = e_{s_i(j),s_i(k)} g_i$ for all $i,j,k$ with $k \neq j$
\item $e_{ij}^2= e_{ij}$ for all $i\neq j$.
\end{enumerate}

The subalgebra of $Y_{d,n}(u)$ generated by the $g_i$'s and $e_i$'s
has been investigated in the past years. J. Juyumaya and F. Aicardi have introduced
a diagram algebra $\mathcal{E}_n(u)$ called the algebra of braids and ties,
such that this subalgebra is an homomorphic image of $\mathcal{E}_n(u)$,
this morphism being generically injective (actually already for $d \geq n$, see \cite{ERH}).
A Markov trace was subsequently constructed on this algebra of braids and ties, see
\cite{AICARDIJUYU}. This algebra is efficiently studied in \cite{RYOMHANSEN}, where S. Ryom-Hansen
provides a faithful module for it, and uses it to show that the algebra has dimension $n! Bell_n$, where $Bell_n$ is the $n$-th Bell number. Theorem \ref{thm:main} below generalizes this last statement.

Now we notice that, in \cite{CHOULPDA1}, M. Chlouveraki and L. Poulain d'Andecy introduce
other generators $g'_i = g_i + (v^{-1}-1)e_i g_i$, with $u = v^2$. The relation between $g'_i$
and $e_i$ is then $(g'_i)^2 = 1 + (v-v^{-1})e_i g'_i$.
They notice that these generators also satisfy the braid relations. We will
give a general explanation for this phenomenon in section \ref{sect:braidmorphisms}.
 
\section{Construction of the algebra $\mathcal{C}_W$}

\subsection{General construction}
\label{sect:genconstruction}

Here $\kk$ is a commutative ring (with $1$).
Let $W$ denote
 a 
 Coxeter group, with generating set $S$. We let $\mathcal{R} \supset S$
denote its set of reflections. If $W$ is finite this set can be defined as the
geometric reflections of $W$ in its natural representation, and in the general
case this is the set of conjugates of $S$. We denote $\mathcal{P}_f(\mathcal{R})$
the set of all finite subsets of $\mathcal{R}$, and by $\mathcal{P}(\mathcal{R})$
the set of all its subsets. We recall that a reflection subgroup of $W$ is a subgroup
generated by a subset of $\mathcal{R}$. 

We also recall that a Coxeter group $W$ given by the Coxeter system $(W,S)$ is finitely
generated as a group if and only if $S$ is finite. Indeed, if $W = \langle x_1,\dots, x_n \rangle$
for some $x_1,\dots, x_n$, we can write the $x_i$'s as a product of a finite number of elements of
$S$, hence $W$ is equal to its standard parabolic subgroup $(W_X,X)$ for some finite $X \subset S$.
Since $W_X \cap S = X$ (\cite{BOURB456}, IV \S 1 No. 8, corollaire 2) this proves that $S=X$ is finite.

Finally, we recall from Dyer's thesis
the following basic fact, extending a well-known property of finite Coxeter groups to general ones :
\begin{proposition} \label{prop:dyer} (Dyer, PhD thesis, theorem 1.8; see also \cite{DYERART} corollary 3.11 (ii) and Deodhar \cite{DEODHAR}) Let $W_0$ be a reflection subgroup of $W$. Then $W_0$
is a Coxeter group $(W_0,S_0)$ with $S_0 \subset \mathcal{R}$
and $W_0 \cap \mathcal{R} = \mathcal{R}_0$, with $\mathcal{R}_0$
the set of reflections of $(W_0,S_0)$. Moreover, if $W_0$ is generated by $J \subset \mathcal{R}$, then every
element of $\mathcal{R}_0$ is a conjugate inside $W_0$ of an element of $J$.
\end{proposition}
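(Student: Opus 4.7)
The plan is to use the Tits geometric representation of $(W,S)$ and its associated root system $\Phi = \Phi^+ \sqcup \Phi^-$, together with the canonical $W$-equivariant bijection $\Phi^+ \leftrightarrow \mathcal{R}$, $\alpha \mapsto s_\alpha$. Given a reflection subgroup $W_0 \leq W$, the first step is to set $\Phi_0^+ = \{\, \alpha \in \Phi^+ : s_\alpha \in W_0 \,\}$ and to single out the canonical candidate generating set
\[
S_0 = \{\, s_\alpha : \alpha \in \Phi_0^+ \text{ and } s_\alpha(\Phi_0^+ \setminus \{\alpha\}) \subset \Phi^+ \,\}.
\]
Intuitively, $s_\alpha \in S_0$ means that $\alpha$ is the only element of $\Phi_0^+$ sent to a negative root by $s_\alpha$; these should be thought of as the intrinsic \emph{simple reflections} of $W_0$.

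The core step is to prove that $(W_0, S_0)$ is a Coxeter system with $\langle S_0 \rangle = W_0$. I would introduce the length function $\ell_0 \colon W_0 \to \Z_{\geq 0}$ defined by $\ell_0(w) = \#\{\alpha \in \Phi_0^+ : w\alpha \in \Phi^-\}$, verify that $\ell_0(w s_\alpha) = \ell_0(w) \pm 1$ for $s_\alpha \in S_0$, and derive the \emph{strong exchange condition}: if $s_{i_1} \cdots s_{i_k}$ is a word in $S_0$ and $t \in W_0 \cap \mathcal{R}$ satisfies $\ell_0(t s_{i_1} \cdots s_{i_k}) < \ell_0(s_{i_1} \cdots s_{i_k})$, then $t s_{i_1} \cdots s_{i_k} = s_{i_1} \cdots \widehat{s_{i_j}} \cdots s_{i_k}$ for some $j$. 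From this, a standard Bourbaki/Matsumoto-type criterion gives that $(W_0, S_0)$ is a Coxeter system, while a minimality argument on $\ell_0$ shows that $S_0$ in fact generates $W_0$. This combinatorial analysis of $\Phi_0^+$ is the principal obstacle and constitutes the technical heart of Dyer's and Deodhar's work.

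Granted this, the equality $W_0 \cap \mathcal{R} = \mathcal{R}_0$ is straightforward: the inclusion $\supset$ holds because $S_0 \subset \mathcal{R}$ is stable under $W$-conjugation, and conversely any $s_\alpha \in W_0 \cap \mathcal{R}$ has $\alpha \in \Phi_0^+$ by definition, so is $W_0$-conjugate to some element of $S_0$ by the standard theory of the Coxeter system $(W_0, S_0)$, hence lies in $\mathcal{R}_0$. For the moreover, let $J \subset \mathcal{R}$ with $\langle J \rangle = W_0$ and let $t \in \mathcal{R}_0$. Since $t$ is already $W_0$-conjugate to some $s \in S_0$, it suffices to show each $s \in S_0$ is a $W_0$-conjugate of some $j \in J$. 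Writing $s = j_1 \cdots j_k$ with $j_i \in J$ and $k$ minimal, the involution identity $s = s^{-1}$ combined with the strong exchange condition (applied after expanding each $j_i$ as a product of elements of $S_0$) forces an index $i$ such that $s = (j_1 \cdots j_{i-1}) j_i (j_1 \cdots j_{i-1})^{-1}$, which yields the required conjugacy inside $W_0$.
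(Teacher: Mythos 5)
This proposition is not proved in the paper at all: it is imported verbatim from the literature (Dyer's thesis, Theorem~1.8; Dyer's paper, Corollary~3.11(ii); Deodhar), so there is no in-paper argument to compare yours against. What you have written is an accurate roadmap of the standard Dyer--Deodhar proof --- the canonical simple system $S_0$ cut out of $\Phi_0^+$ by the condition $s_\alpha(\Phi_0^+\setminus\{\alpha\})\subset\Phi^+$, the auxiliary length function $\ell_0$, and the strong exchange condition --- but it is a roadmap, not a proof. You say yourself that the verification that $(W_0,S_0)$ is a Coxeter system generated by $S_0$ ``constitutes the technical heart of Dyer's and Deodhar's work'' and you do not carry it out. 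Since that verification \emph{is} the content of the proposition, the proposal as written establishes nothing beyond what the citation already does; if the intent is to rely on the literature, the honest move is to cite it (as the paper does), and if the intent is to reprove it, the inductive analysis of $\ell_0$ and the exchange condition must actually be done.

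Two further points are loose even granting the core step. First, your argument that $W_0\cap\mathcal{R}\subseteq\mathcal{R}_0$ invokes ``the standard theory of the Coxeter system $(W_0,S_0)$'' to conjugate $s_\alpha$ into $S_0$; what that standard theory gives you is that the reflections of $(W_0,S_0)$ are the $W_0$-conjugates of $S_0$, whereas what you need is that every $s_\alpha$ with $\alpha\in\Phi_0^+$ is such a conjugate --- this is exactly the strong exchange statement for arbitrary $t\in W_0\cap\mathcal{R}$ applied to a reduced $S_0$-word for $t$ itself, and should be said that way rather than by appeal to generalities. Second, the final ``moreover'' step does not work as described: strong exchange deletes a single letter from an $S_0$-word, so after ``expanding each $j_i$ as a product of elements of $S_0$'' the deletion exhibits $s$ as a $W_0$-conjugate of an element of $S_0$, not of $J$, and the claimed identity $s=(j_1\cdots j_{i-1})\,j_i\,(j_1\cdots j_{i-1})^{-1}$ does not follow. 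The correct argument (Dyer's Corollary~3.11(ii)) runs through the reflection cocycle $N(xy)=N(x)\mathbin{\triangle}xN(y)x^{-1}$ restricted to $W_0\cap\mathcal{R}$, or equivalently an induction on depth of roots in $\Phi_0^+$; some such mechanism is needed to land on an element of $J$ rather than of $S_0$.
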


 For every $s \in S$, we choose $u_s \in \kk$
such that $s_1 \sim s_2 \Rightarrow u_{s_1} = u_{s_2}$,
where $a \sim b$ means that $a,b \in S$ lie in the same conjugacy class.
We set $\underline{u} = (u_s)_{s \in S}$ and define
$\mathcal{C}_W(\underline{u})$ to be the associative unital $\kk$-algebra defined by generators
$g_s,s \in S$, $e_{t}, t \in \mathcal{R}$, and relations
\begin{enumerate}
\item $\underbrace{g_s g_t g_s \dots }_{m_{st}} =\underbrace{g_t g_s g_t \dots }_{m_{st}}$ for $s,t \in S$
\item $e_t^2 = e_t$ for all $t \in \mathcal{R}$
\item $e_{t_1}e_{t_2} = e_{t_2}e_{t_1}$ for all $t_1,t_2 \in \mathcal{R}$
\item $e_{t} e_{t_1} = e_t e_{t t_1t^{-1}}$ for all $t,t_1,t_2 \in \mathcal{R}$
\item $g_s e_t = e_{sts} g_s$ for all $s \in S$, $t \in \mathcal{R}$
\item $g_s^2 = 1 + (u_s -1)e_s(1+g_s)$ for all $s \in S$.
\end{enumerate}

Note that $\mathcal{C}_W(\underline{u})$ is actually finitely generated as soon as $S$ is finite,
by the following elementary proposition.
\begin{proposition} The algebra $\mathcal{C}_W(\underline{u})$ is generated by the
$g_s,e_s$ for $s \in S$.
\end{proposition}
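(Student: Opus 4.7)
The plan is to show by induction on $k$ that every $e_t$ for $t \in \mathcal{R}$ lies in the subalgebra $\mathcal{A} := \langle g_s, e_s \mid s \in S \rangle$, where $k$ is the length of a word $w = s_{i_1}\cdots s_{i_k} \in W$ such that $t = wsw^{-1}$ for some $s \in S$. Such a $w$ exists for every $t \in \mathcal{R}$ by definition of $\mathcal{R}$ as the set of $W$-conjugates of $S$, and the base case $k = 0$ is trivial since then $t = s \in S$.

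For the inductive step, write $t = s't's'$ with $s' = s_{i_1}$ and $t' = s_{i_2}\cdots s_{i_k} s s_{i_k}\cdots s_{i_2}$, so that $e_{t'} \in \mathcal{A}$ by induction. First note that relation (5) applied with both arguments equal to $s'$ yields $g_{s'} e_{s'} = e_{s'} g_{s'}$, so $g_{s'}$ and $e_{s'}$ commute. Applying (5) to the pair $(s', t')$ gives $g_{s'} e_{t'} = e_t g_{s'}$; right-multiplying by $g_{s'}$ and substituting (6) produces
\[
g_{s'} e_{t'} g_{s'} = e_t g_{s'}^2 = e_t + (u_{s'}-1)\,e_t e_{s'} + (u_{s'}-1)\,e_t e_{s'} g_{s'}.
\]

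The crux is the identity $e_t e_{s'} = e_{s'} e_{t'}$, which lets us eliminate the unknown $e_t$ from the right-hand side. This follows by first applying (3) and then (4) with $(t, t_1) = (s', s't's')$: one gets $e_t e_{s'} = e_{s'} e_{s't's'} = e_{s'} e_{s' \cdot s't's' \cdot s'} = e_{s'} e_{t'}$, using $(s')^2 = 1$. Substituting back yields
\[
e_t = g_{s'} e_{t'} g_{s'} - (u_{s'}-1)\,e_{s'} e_{t'} - (u_{s'}-1)\,e_{s'} e_{t'} g_{s'},
\]
which lies in $\mathcal{A}$ by the inductive hypothesis, completing the induction. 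The main observation is that this identity obviates any need to invert $g_{s'}$; without it, one would try to recover $e_t$ as the conjugate $g_{s'} e_{t'} g_{s'}^{-1}$, which would force an invertibility assumption on $u_{s'}$ (compare the Yokonuma-Hecke-type formula $g_s^{-1} = g_s + (u_s^{-1}-1)e_s(1+g_s)$ mentioned earlier). Once the identity $e_t e_{s'} = e_{s'} e_{t'}$ is recognized, the remaining work is routine algebraic manipulation, so no serious obstacle is anticipated; the only delicate point is the careful bookkeeping of conjugations when applying (4).
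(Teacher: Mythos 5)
Your proof is correct, and it follows the same overall induction as the paper's: both write $t=w s_0 w^{-1}$, peel off one simple generator at a time, and reduce to showing that $e_{t'}\in\mathcal{A}$ implies $e_{s't's'}\in\mathcal{A}$. The difference lies entirely in that last step. The paper concludes via the conjugation formula $g_{s'}\,e_{t'}\,g_{s'}^{-1}=e_{s't's'}$, which tacitly requires $g_{s'}$ to be invertible; since there is a one-dimensional representation sending $g_s\mapsto u_s$ and $e_t\mapsto 1$, the element $g_{s'}$ is invertible in $\mathcal{C}_W(\underline{u})$ only when $u_{s'}\in\kk^{\times}$, an assumption the general construction of Section 3.1 does not make (there $\kk$ is an arbitrary commutative ring). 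You instead right-multiply $g_{s'}e_{t'}=e_t g_{s'}$ by $g_{s'}$, expand $g_{s'}^2$ via relation (6), and use the identity $e_t e_{s'}=e_{s'}e_{t'}$ (a correct consequence of relations (3) and (4), since $s'ts'=t'$) to solve for $e_t$ as an explicit $\kk$-linear combination of $g_{s'}e_{t'}g_{s'}$, $e_{s'}e_{t'}$ and $e_{s'}e_{t'}g_{s'}$, all of which lie in $\mathcal{A}$ by induction. This buys a proof valid over any commutative ring with no invertibility hypothesis on the parameters, at the cost of one extra identity; the paper's version is shorter but, read literally, only covers the case where the $u_s$ are units.
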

\begin{proof}
Let $A$ be the subalgebra of $\mathcal{C}_W(\underline{u})$ generated by the $g_s,e_s$
for $s \in S$. It is sufficient to show that $e_t \in A$ for all $t \in \mathcal{R}$. By definition such a $t$
can be written as $w^{-1}s_0w$ for some $s_0 \in S$ and $w \in W$. Writing $w = s_1\dots s_r$
with $s_1,\dots, s_r \in S$, we need to prove $e_{s_rs_{r-1}\dots s_1 s_0 s_1 \dots s_r} \in A$
for all $s_0,s_1,\dots,s_r \in S$. By induction on $r$ this results from
the relation $g_{s_r} e_{s_{r-1}\dots s_1 s_0 s_1 \dots s_{r-1}} g_{s_r}^{-1} = e_{s_rs_{r-1}\dots s_1 s_0 s_1 \dots s_{r-1}s_r}$.
\end{proof}

For $w \in W$, we let $g_w = g_{s_1}\dots g_{s_r}$ if $s_{1}\dots s_{r}$
is a reduced expression of $w$. Since the $g_s$'s satisfy the
braid relations this does not depend on the
chosen expression by Iwahori-Matsumoto's theorem.

For $J \in \mathcal{P}_f( \mathcal{R})$, we set $e_J = \prod_{t \in J} e_t$. In order to
study these elements we define an equivalence relation $J \sim K$ on $\mathcal{P}_f(\mathcal{R})$
as the equivalence relation generated by the couples $(J, K) \in \mathcal{P}_f(\mathcal{R}) \times \mathcal{P}_f(\mathcal{R})$
such that $J$ contains some $\{ s,t\}$ and $K
= J \cup \{sts \}$. By definition this is the smallest equivalence relation containing such couples.

This equivalence relation can be restated as follows.

\begin{proposition}
Let $J,K \in \mathcal{P}_f(\mathcal{R})$. Then, $J \sim K$ if and only if $\langle J \rangle = \langle K \rangle$.
\end{proposition}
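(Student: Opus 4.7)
The plan is to prove the two implications separately, with the forward direction being immediate and the reverse one being the substantive step.

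For the easy direction ($\Rightarrow$), I would observe that every elementary move preserves the generated subgroup: if $\{s,t\} \subseteq J$, then $sts \in \langle s,t\rangle \subseteq \langle J\rangle$, so $\langle J\cup\{sts\}\rangle = \langle J\rangle$. Since $\sim$ is generated by such moves, transitivity yields $\langle J\rangle = \langle K\rangle$ whenever $J \sim K$.

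For the hard direction ($\Leftarrow$), the key lemma I would aim for is: \emph{if $r\in\mathcal{R}$ lies in $\langle J\rangle$, then $J\sim J\cup\{r\}$.} Granting this, the proposition follows easily: if $\langle J\rangle = \langle K\rangle$, then since $K$ is finite and each of its elements lies in $\langle J\rangle$, iterated application of the lemma gives $J\sim J\cup K$; symmetrically $K\sim J\cup K$; then transitivity gives $J\sim K$.

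To prove the key lemma I invoke Dyer's theorem (Proposition \ref{prop:dyer}) applied to the reflection subgroup $W_0 = \langle J\rangle$: the set of reflections of $(W_0,S_0)$ is $\mathcal{R}\cap W_0$, and every element of $\mathcal{R}\cap W_0$ is a $W_0$-conjugate of some element of $J$. So I may write $r = w s_0 w^{-1}$ with $s_0 \in J$ and $w = s_1 s_2 \cdots s_k$ a product of elements of $J$ (using that reflections are involutions, no inverses are needed). Define the sequence $r^{(0)} = s_0$ and $r^{(j)} = s_{k-j+1}\, r^{(j-1)}\, s_{k-j+1}$, so that $r^{(k)} = r$; each $r^{(j)}$ lies in $\mathcal{R}$. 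Starting from $J$, the pair $\{s_k, s_0\} \subseteq J$ lets me add $r^{(1)}$; then $\{s_{k-1}, r^{(1)}\}$ is present, letting me add $r^{(2)}$; continuing, I obtain $J \sim J \cup \{r^{(1)},\dots,r^{(k-1)}, r\}$.

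The delicate point, and the step I expect to require the most care, is stripping off the intermediate reflections to conclude $J\sim J\cup\{r\}$. The trick is to run exactly the same sequence of moves starting from $J\cup\{r\}$ instead of $J$: this is legal since the moves only use the $s_i$ and previously-added $r^{(j)}$, which are present in both starting sets. This shows $J\cup\{r\} \sim J\cup\{r^{(1)},\dots,r^{(k-1)},r\}$. Combining with the previous chain via transitivity of $\sim$ yields $J\sim J\cup\{r\}$, completing the proof.
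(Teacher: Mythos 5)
Your proof is correct and follows essentially the same route as the paper: both hinge on Dyer's result (Proposition \ref{prop:dyer}) that every reflection of $\langle J\rangle$ is a $\langle J\rangle$-conjugate of an element of $J$, realized inside $\sim$ by a chain of elementary moves through the intermediate conjugates $r^{(j)}$. Your packaging --- isolating the lemma $J\sim J\cup\{r\}$ and then meeting in the middle at $J\cup K$, with the `run the same moves starting from $J\cup\{r\}$' trick to discard the intermediate reflections --- is a slightly tidier organization of what the paper does via its reduction to the case $K\subset J$ and induction on $|J\setminus K|$, but the mathematical content is the same.
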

\begin{proof}

It is easy to prove that, if $J \sim K$, then $\langle J \rangle = \langle K \rangle$.
Indeed, the relation $J_1 \equiv J_2$ defined by $\langle J_1 \rangle = \langle J_2 \rangle$ is obviously an equivalence relation,
and, if $K = J \cup \{ sts \}$ with $\{ s,t \} \subset J$,  we have $\langle J \rangle = \langle K \rangle$, that is $J \equiv K$. It follows that
the relation $\equiv$ contains all such couples $(J,K)$, hence $J \sim K \Rightarrow J \equiv K$.

Conversely, let us assume $\langle J \rangle = \langle K \rangle$.
If $J= \emptyset$ or $K = \emptyset$ then $\langle J \rangle  = \{ 1 \} = \langle K \rangle$ implies $J = K = \emptyset$ hence $J \sim K$.
Otherwise, let us
set $W_0 = \langle J \rangle = \langle K \rangle$. 
By proposition \ref{prop:dyer} the group $W_0$ is a Coxeter group with generating set $S_0 \subset \mathcal{R}$,
and $W_0 \cap \mathcal{R} = \mathcal{R}_0$ is the set of all conjugates of elements of $S_0$. 
Moreover, since $W_0 = \langle J \rangle$, proposition \ref{prop:dyer} states that every element $\mathcal{R}_0$
is a conjugate (inside $W_0$) of an element of $J$. This applies in particular to the elements of $K \subset W_0 \cap \mathcal{R} = \mathcal{R}_0$.
Therefore, every $x \in K$ can be written as $m s_0 m^{-1}$ for some $s_0 \in J$ and $m \in W_0 = \langle J \rangle$.
Writing $m = s_r s_{r-1} \dots s_1$ for some $s_1,\dots,s_r \in J$ we get $x = 
s_r s_{r-1} \dots s_1 s_0 s_1 \dots s_{r-1} s_r$
for $s_0,\dots,s_r \in J$. By induction on $r$ one gets readily that $x \in J_x$, for some $J_x \in \mathcal{P}_f(\mathcal{R})$ with $J_x \sim J$ and $J \cup J_x$.
Since $\langle J_x \rangle = \langle J \rangle = \langle K \rangle$ and $K$ is finite, we can iterate this argument for all elements $x \in K$,
and this proves that $K \subset J'$ for some $J' \in \mathcal{P}_f(\mathcal{R})$ with $J \subset J'$ and $J \sim J'$.

Therefore, we can assume $K \subset J$. By the same argument, every
element of $J$ can be written as $s_r s_{r-1} \dots s_1 s_0 s_1 \dots s_{r-1} s_r$
for $s_0,\dots,s_r \in K \subset J$. By induction on $|J \setminus K|$ we get from this that $J \sim K$.
\end{proof}

Therefore, the set of equivalence classes is in natural bijection with the collection $\mathcal{W}$
of finitely generated reflection subgroups of $W$. In particular,
when $W$ is finite, the number of equivalence classes can be identified with the
number of reflection subgroups of $W$. Notice that, when $W$ is the Weyl group
of some root system $R$, then reflection subgroups are in 1-1
correspondence with root subsystems (in the sense of a subset of $R$ satisfying the
axioms of root systems, as in \cite{BOURB456}).

By relations (2) and (4) above, we have $e_s e_t = e_s e_t e_t = e_s e_{sts} e_t$
and thus $J \sim K$ implies $e_J = e_K$. Therefore, we can define $e_{W_0}$
for every finitely generated reflection subgroup $W_0$ of $W$, by letting $e_{W_0} = e_J$
for any $J \in \mathcal{P}_f(\mathcal{R})$ with $\langle J \rangle = W_0$. 
Notice that, when $W$ is finite, there is a distinguished representative of the class of $J \in
\mathcal{P}_f(\mathcal{R}) = \mathcal{P}(\mathcal{R})$, namely $\overline{J} := \langle J \rangle \cap \mathcal{R}$. In the general case, one can make a different choice, taking for $\overline{J}$ Dyer's canonical set of Coxeter generators for $\langle J \rangle$ (since such set can be infinite only
if the Coxeter group is not finitely generated). In the sequel, we will denote $\overline{J} \in \mathcal{P}_f(\mathcal{R})$ the chosen representative of the class of $J \in \mathcal{P}_f(\mathcal{R})$.

\subsection{Description as a module}

\begin{theorem} \label{thm:main} The algebra $\mathcal{C}_W(\underline{u})$ is a free $\kk$-module
with basis the $e_{\bar{J}} g_w$, for $w \in W$ and $J \in \mathcal{P}_f(\mathcal{R})$. In particular,
if $W$ is finite then it has for rank the order $|W|$ of $W$ multiplied by the number $|\mathcal{W}|$
of reflection subgroups of $W$.
\end{theorem}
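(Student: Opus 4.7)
The plan is to prove the theorem in two steps: first show that the elements $e_{\bar{J}} g_w$ span $\mathcal{C}_W(\underline{u})$, and then construct an explicit $\kk$-module on which these elements act as linearly independent vectors.

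For spanning, I would argue by reduction to a normal form. Any element of $\mathcal{C}_W(\underline{u})$ is a $\kk$-linear combination of words in the generators $g_s$ and $e_t$. Using relation (5) repeatedly, one can push every $e_t$-letter past every subsequent $g_s$-letter (at the cost of conjugating $t$ by $s$), collecting all the $e$'s on the left as some $e_J$, which equals $e_{\bar{J}}$ by the discussion following the definition of the equivalence relation. It then remains to rewrite the $g$-part as a $\kk$-combination of $g_w$'s for $w \in W$. When the remaining word $g_{s_{i_1}} \cdots g_{s_{i_n}}$ is not reduced, I would use the braid relations to bring it into a form containing a square $g_s g_s$ (a standard Matsumoto-style maneuver), then apply the quadratic relation (6): the substitution $g_s^2 = 1 + (u_s-1)e_s + (u_s-1)e_s g_s$ produces terms of strictly shorter $g$-length, and the new $e_s$ factors are pushed back to the left by the same procedure. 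An induction on the $g$-length closes the spanning argument.

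For linear independence, I would construct the free $\kk$-module $M = \bigoplus_{(w, W_0)} \kk\, v_{w, W_0}$ indexed by pairs with $w \in W$ and $W_0 \in \mathcal{W}$, and endow it with operators
$$ e_t \cdot v_{w, W_0} = v_{w, \langle W_0, t\rangle}, $$
$$ g_s \cdot v_{w, W_0} = \begin{cases} v_{sw,\, sW_0 s} & \text{if } \ell(sw) > \ell(w), \\ v_{sw,\, sW_0 s} + (u_s-1)\bigl(v_{sw,\, \langle s, W_0\rangle} + v_{w,\, \langle s, W_0\rangle}\bigr) & \text{otherwise.} \end{cases} $$
Relations (2), (3), (4) are immediate from the elementary fact that the subgroup $\langle W_0, T\rangle$ depends only on $W_0$ and the subgroup generated by $T \subset \mathcal{R}$. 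Relation (5) reduces to the identity $s\langle W_0, t\rangle s = \langle sW_0 s,\, sts\rangle$ together with the observation that the same correction terms appear on both sides in the descent case. Relation (6) is checked by direct calculation of $g_s^2 \cdot v_{w, W_0}$ against $(1 + (u_s-1)e_s(1+g_s))\cdot v_{w, W_0}$, splitting according to the sign of $\ell(sw) - \ell(w)$.

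The main obstacle is verifying the braid relations: for $s, t \in S$ with $m_{st} < \infty$, the two length-$m_{st}$ words $g_s g_t g_s \cdots$ and $g_t g_s g_t \cdots$ must act identically on every $v_{w, W_0}$. The $W$-coordinate of the action coincides with that of the classical Iwahori-Hecke algebra, where the braid relation is a consequence of Matsumoto's theorem; the new content is that the subgroup coordinate $W_0$ is modified by a sequence of conjugations and adjunctions involving only the letters $s$ and $t$, and a bookkeeping argument shows that both sides produce the same final subgroup (and the same correction terms) thanks to the symmetry of the braid word under the involution that exchanges the two sides. Once the module structure is verified, evaluating $e_{\bar{J}} g_w$ on the distinguished vector $v_{1, \{1\}}$ produces $v_{w, \langle J\rangle}$: indeed any reduced expression of $w$ read right-to-left only triggers ascending steps, so $g_w \cdot v_{1, \{1\}} = v_{w, \{1\}}$, and then $e_{\bar{J}}$ adjusts the subgroup coordinate to $\langle J\rangle$. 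Thus the $e_{\bar{J}} g_w$ are sent to pairwise distinct basis vectors of $M$, giving their $\kk$-linear independence. Combined with the spanning argument, this yields the basis, and the rank formula for finite $W$ follows by counting pairs $(w, W_0)$.
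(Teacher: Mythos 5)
Your overall strategy coincides with the paper's: span by the $e_{\bar J}g_w$ via rewriting, then separate them by an explicit module on pairs $(w,W_0)$ with exactly the operators you write down (your correction terms $\langle s,W_0\rangle$ agree with the paper's $sJs\cup\{s\}$ since $\langle sJs\cup\{s\}\rangle=\langle J\cup\{s\}\rangle$). The spanning argument and the verifications of relations (2)--(6) are fine. The problem is that the one step you yourself flag as ``the main obstacle'' --- the braid relations for the operators $G_s$ --- is precisely the heart of the proof, and your proposed resolution (``a bookkeeping argument\dots thanks to the symmetry of the braid word'') is not an argument. For a general basis vector $v_{w,W_0}$ the ascent/descent pattern of the successive prefixes of $\underbrace{sts\cdots}_{m_{st}}$ acting on $w$ is not symmetric between the two braid words, each descent spawns correction terms with two different $W$-coordinates and an enlarged subgroup coordinate, and these correction terms are themselves acted on by the remaining letters; the two resulting sums are not visibly equal term by term. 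Until the braid relations are verified, $M$ is not known to be a $\mathcal{C}_W(\underline{u})$-module, so the linear-independence conclusion does not follow.

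The paper closes exactly this gap with an idea absent from your proposal: it introduces \emph{right}-multiplication operators $G'_t$ and $E'_t$ (mimicking $x\mapsto x g_t$ and $x\mapsto x e_{wtw^{-1}}$ on $e_Jg_w$), and proves by a case analysis on the four sign patterns of $(\ell(sw)-\ell(w),\ell(wt)-\ell(w))$ that they commute with the left operators $G_s,E_s$. Writing $v_{J,w}=G'_{t_r}\cdots G'_{t_1}E_J\,v_{\emptyset,1}$ for a reduced word $w=t_1\cdots t_r$, the braid identity for the $G$'s on an arbitrary $v_{J,w}$ is then pulled back to the single vector $v_{\emptyset,1}$, where every prefix of $\underbrace{sts\cdots}_{m_{st}}$ is reduced, no correction terms appear, and both sides are trivially $v_{\emptyset,\,\underbrace{sts\cdots}_{m_{st}}}$. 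You would need to supply this device (or some genuine substitute for the direct verification) before your proof is complete.
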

We shall see in section \ref{sect:Bell} that $|\mathcal{W}|$ may be called the Bell number of type $W$.
\begin{proof}
We denote by $\ell$ the classical length function on the Coxeter group $W$.
To each $J \in \mathcal{P}_f(\mathcal{R})$ we associate $e_J = \prod_{t \in J} e_t$. Let us consider
$J \in \mathcal{P}_f(\mathcal{R})$, $w \in W$ and $s \in S$. Then $g_se_J g_w = e_{sJs^{-1}} g_s g_w$.
If $\ell(sw) = \ell(w)+1$ we have $g_s g_w = g_{sw}$
and we get $g_s. e_J g_w = e_{sJs^{-1}} g_{sw}$. Otherwise
$w$ can be written $w = s w'$ with $\ell(w') = \ell(w)-1$. Then $g_s g_w = 
g_s^2 g_{w'} = g_{w'} + (u_s-1) e_s (1+g_s) g_{w'} = 
g_{w'} + (u_s-1) e_s g_{w'} + (u_s-1) e_s g_s g_{w'}
= g_{w'} + (u_s-1) e_s g_{w'} + (u_s-1) e_s g_w$. It follows that
$g_s .e_J g_w = e_{sJs}g_{w'} + (u_s-1) e_{sJs}e_s g_{w'} + (u_s-1) e_{sJs}e_s g_w
= e_{sJs}g_{sw} + (u_s-1) e_{sJs\cup \{ s \}} g_{sw} + (u_s-1) e_{sJs \cup \{ s \}} g_w$.
Finally, in all cases we have $e_s. (e_J g_w) = e_{J \cup \{ s \}} g_w$.
Since $\mathcal{C}_W(\underline{u})$ is generated as a unital algebra by the $g_s$ and $e_s$, $s \in S$ this
proves that the set of the $e_J g_w$ for $J \in \mathcal{P}_f(\mathcal{R})$, $w \in W$,
and therefore of the $e_{\bar{J}} g_w$ for $J\in \mathcal{P}_f(\mathcal{R})$, $w \in W$, is
a spanning set for $\mathcal{C}_W(\underline{u})$.

We notice that $(e_J g_w) e_s = e_J e_{w s w^{-1}} g_w = e_{J \cup \{ w s w^{-1} \}} g_w$
and, if $\ell(ws) = \ell(w)+1$, then $(e_J g_w) g_s = e_J g_{ws}$. If $\ell(ws) = \ell(w)-1$,
then $e_J g_w g_s = e_J g_{ws} g_s^2 = e_Jg_{ws}(1+ (u_s-1) e_s(1+g_s))
= e_Jg_{ws} +(u_s-1) e_Jg_{ws} e_s + (u_s-1)e_Jg_{ws}e_s g_s
= e_Jg_{ws} +(u_s-1) e_Je_{ws.s.(ws)^{-1}}g_{ws}  + (u_s-1)e_Je_{ws.s.(ws)^{-1}}g_{ws} g_s
= e_Jg_{ws} +(u_s-1) e_Je_{wsw^{-1}}g_{ws}  + (u_s-1)e_Je_{wsw^{-1}}g_{w}
= e_Jg_{ws} +(u_s-1) e_{J \cup \{ wsw^{-1} \}}g_{ws}  + (u_s-1)e_{J \cup \{ wsw^{-1}\} }g_{w}$.

We now consider a free $\kk$-module $V$ with basis
$v_{J,w}$ for $J \in \mathcal{P}_f(\mathcal{R})$, $w \in W$, with the convention $v_{J,w} = v_{K,w}$ if $J \sim K$.
We introduce $\kk$-linear endomorphisms
$G_s,E_s, G'_s, E'_s  \in \End(V)$ defined by the formulas $E_s.v_{J,w} = v_{J \cup \{ s \}, w}$
and
$$
\begin{array}{lcll}
G_s .v_{J,w} &=&v_{sJs, sw} &\mbox{\ if\ }\ell(sw)  = \ell(w)+1 \\
 &=&v_{sJs,sw} + (u_s-1) v_{sJs\cup \{ s \},sw} + (u_s-1) v_{sJs \cup \{ s \},w} &\mbox{\ if\ }\ell(sw)  = \ell(w)-1 \\
E_s .v_{J,w} &=&v_{J\cup \{ s \}, w} \\
E'_s .v_{J,w} &=&v_{J\cup \{ wsw^{-1} \}, w} \\
G'_s .v_{J,w} &=&v_{J, ws} &\mbox{\ if\ }\ell(ws)  = \ell(w)+1 \\
&=&v_{J,ws} +(u_s-1) v_{J \cup \{ wsw^{-1} \},ws}  + (u_s-1)v_{J \cup \{ wsw^{-1}\} ,w}
&\mbox{\ if\ }\ell(ws)  = \ell(w)-1 \\
 \end{array}
$$
We easily check on these formulas that
$E_s^2 = E_s$, $(E'_s)^2 = E'_s$. Moreover
$$E_s E'_t . v_{J,w} = E_s . v_{J \cup \{ wtw^{-1}\},w} = 
 v_{J \cup \{ wtw^{-1}\} \cup \{ s \},w},$$
 while $E'_t E_s v_{J,w} = E'_t. v_{ J \cup \{s \},w}
 = v_{J \cup \{ s \} \cup \{ w tw^{-1} \},w}$. This proves $E_s  E'_t = E'_t E_s$ for all $s,t$.
 Similarly, if $\ell(wt) = \ell(w)+1$, we have $E_s G'_t . v_{J,w}= E_s v_{J, wt}
 = v_{J \cup \{ s \}, wt}$. Otherwise $G'_t E_s . v_{J,w} = G'_t.v_{J \cup \{ s \},w} 
 = v_{J \cup \{ s \},wt}$ ; if $\ell(wt) = \ell(w)-1$, we have 
 $E_s G'_t . v_{J,w}= E_s .(v_{J,wt} +(u_t-1) v_{J \cup \{ wtw^{-1} \},wt}  + (u_t-1)v_{J \cup \{ wtw^{-1}\} ,w})
= v_{J\cup \{ s \},wt} +(u_t-1) v_{J \cup \{ wtw^{-1} \}\cup \{ s \},wt}  + (u_t-1)v_{J \cup \{ wtw^{-1}\}\cup \{ s \} ,w}$
and $G'_tE_s v_{J,w} = G'_t v_{J \cup \{ s \},w}
= v_{J \cup \{ s \},wt} +(u_t-1) v_{J \cup \{ wtw^{-1} \} \cup \{ s \},wt}  + (u_t-1)v_{J \cup \{ s \} \cup \{ wtw^{-1}\} ,w}$, which proves $G'_t E_s = E_s G'_t$ for all $s,t$.
By a similar computation we get $G_t E'_s = E'_s G_t$ for all $s,t$.

We now want to check that $G_s G'_t = G'_t G_s$. We first recall the following classical fact, of which we recall a proof for the convenience of the reader:
\begin{lemma} Let $(W,S)$ be a Coxeter system. For $s,t \in S$ and $w \in W$, the
equalities $\ell(swt) = \ell(w) $ and $\ell(sw) = \ell(wt)$ imply $sw=wt$.
\end{lemma}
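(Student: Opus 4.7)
\medskip

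\noindent\emph{Proof plan.}
Since $s,t\in S$, the differences $\ell(sw)-\ell(w)$ and $\ell(wt)-\ell(w)$ each lie in $\{\pm 1\}$, and the equality $\ell(sw)=\ell(wt)$ forces them to coincide. So only two cases will arise, and the plan is to reduce the ``descending'' case $\ell(sw)=\ell(wt)=\ell(w)-1$ to the ``ascending'' case $\ell(sw)=\ell(wt)=\ell(w)+1$ by replacing $w$ by $w':=sw$. A direct check will show that $w'$ satisfies $\ell(sw')=\ell(w't)=\ell(w')+1$ and $\ell(sw't)=\ell(w')$, and that the conclusion $sw'=w't$ to be proved for $w'$ is equivalent, via the identities $sw'=w$ and $w't=swt$, to the original conclusion $sw=wt$.

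In the ascending case, I will exploit a reduced expression $w = s_1 s_2 \cdots s_k$: then $s\,s_1\,s_2\cdots s_k$ is a reduced expression for $sw$ of length $k+1$, and the hypothesis $\ell(swt)=k=\ell(sw)-1$ says that right multiplication of $sw$ by $t$ lowers its length. The right-sided exchange condition then provides an index $i\in\{0,1,\dots,k\}$ (with the convention that position $0$ is the initial $s$ and positions $1,\dots,k$ are $s_1,\dots,s_k$) such that $swt$ is obtained from $s\,s_1\cdots s_k$ by deleting the letter at position $i$.

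A case split on $i$ will finish the argument. For $i\ge 1$, one finds $swt = s\,s_1\cdots\widehat{s_i}\cdots s_k$, hence $wt = s_1\cdots\widehat{s_i}\cdots s_k$ would have length at most $k-1$, contradicting $\ell(wt)=k+1$. Therefore $i=0$, the deleted letter is the leading $s$, and the resulting identity $swt = s_1\cdots s_k = w$ rearranges to $sw=wt$. The only delicate point I anticipate is keeping signs and identifications straight in the initial reduction; once in the ascending case, the exchange condition makes everything immediate.
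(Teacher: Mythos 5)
Your proof is correct and follows essentially the same route as the paper: both reduce the descending case to the ascending one via $w'=sw$, and both then apply the exchange condition to delete one letter from a reduced word of length $\ell(w)+1$, ruling out deletion of an interior letter by the length hypothesis so that the deleted letter must be the extra generator, giving $swt=w$. The only (immaterial) difference is that you apply the right-sided exchange condition to $sw=s\,s_1\cdots s_k$ multiplied by $t$, while the paper applies the left-sided one to $wt=s_1\cdots s_n t$ multiplied by $s$.
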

\begin{proof} $\ell(swt) = \ell(w)$ implies that, either $\ell(sw) = \ell(w)+1$ and $\ell(swt) = \ell(sw)-1$,
or $\ell(sw) = \ell(w)-1$ and $\ell(swt) = \ell(sw)+1$. We start dealing with the first case. Let $n = \ell(w)$
and $s_1\dots s_n = w$ a reduced expression. Since $\ell(wt) = \ell(sw) = \ell(w)+1$ we
get that $wt = s_1\dots s_n t$ is again a reduced expression. Since $\ell(s.wt) < \ell(wt)$
we get from the exchange lemma that, either $swt = s_1 \dots s_{j-1} s_{j+1}\dots s_nt$ for some
$j \in \{1,\dots, n\}$, or $swt = s_1\dots s_n$. In the first case we would have $sw = s_1 \dots s_{j-1} s_{j+1}
\dots s_n$, contradicting $\ell(sw) = n+1$. Therefore $swt = s_1\dots s_n = w$ and $sw = wt$.

Now, if $\ell(sw) = \ell(w)-1$ and $\ell(swt) = \ell(sw)+1$, letting $w' = sw$ we can apply
the previous discussion and get $sw' = w't$, that is $sw = wt$.

\end{proof}

If $\ell(sw) = \ell(w)+1$ and $\ell(wt) = \ell(w)+1$ then, either $\ell(swt) = \ell(wt)+1 = 
\ell(sw)+1$, or $\ell(swt) = \ell(wt)-1 = \ell(w)$ in which case $sw = wt$. In the first case,
we have $G_s G'_t v_{J,w} = G_s v_{J ,wt} = v_{sJs,swt}$ and
$G'_t G_s v_{J,w} = G'_tv_{sJs,sw} = v_{sJs,swt}$ ; in the second case,
we have $G_s G'_t v_{J,w} = G_s v_{J,wt} = G_s v_{J,sw} = 
v_{sJs,w} + (u_s-1) 
v_{sJs \cup \{ s \},w} +
(u_s-1) v_{sJs \cup \{ s \},sw}$ and
$G'_t G_s v_{J,w} = G'_t v_{sJs,sw} = G'_t v_{sJs,wt}
= v_{sJs,w} + (u_t-1)v_{sJs \cup \{ w t w^{-1} \},w} + (u_t-1) v_{sJs \cup \{ w t w^{-1} \}, wt}$.
Since the condition $sw = wt$ implies $wtw^{-1} = s$ and in particular $s \sim t$, whence
$u_s = u_t$. Therefore, $G_s G'_t. v_{J,w}= G'_t G_s. v_{J,w}$.

If $\ell(sw) = \ell(w)+1$ and $\ell(wt) = \ell(w)-1$, then
we have $\ell(swt) = \ell(w)$, for otherwise $\ell(swt) = \ell(w)-2$ and $\ell(sw)< \ell(w)$. Then
$G_s G'_t v_{J,w} = 
G_s(v_{J,wt} +(u_t-1) v_{J \cup \{ wtw^{-1} \},wt}  + (u_t-1)v_{J \cup \{ wtw^{-1}\} ,w}) =
G_s(v_{J,wt} +(u_t-1) v_{J \cup \{ wtw^{-1} \},wt}  + (u_t-1)v_{J \cup \{ wtw^{-1}\} ,w}) =
v_{sJs,swt} +(u_t-1) v_{sJs \cup \{ swtw^{-1}s \},swt}  + (u_t-1)v_{sJs \cup \{ swtw^{-1}s\} ,sw}
$
while
$G'_t G_s v_{J,w} = G'_t v_{sJs,sw} = 
v_{sJs,swt} +(u_t-1) v_{sJs \cup \{ swtw^{-1} \}s,swt}  + (u_t-1)v_{sJs \cup \{ swtw^{-1}s\} ,sw}
$ hence $G'_t G_s v_{J,w} = G_s G'_t v_{J,w} $.

If $\ell(sw) = \ell(w)-1$ and $\ell(wt) = \ell(w)+1$, then we have $\ell(swt) = \ell(w)$
for the same reason as in the preceding case. Then
$G_s G'_t v_{J,w} = G_s v_{J,wt} = 
v_{sJs,swt} + (u_s-1) v_{sJs\cup \{ s \},swt} + (u_s-1) v_{sJs \cup \{ s \},wt}$
while 
$G'_t G_s v_{J,w} =
G'_t(v_{sJs,sw} + (u_s-1) v_{sJs\cup \{ s \},sw} + (u_s-1) v_{sJs \cup \{ s \},w})
=
v_{sJs,swt} 
+ (u_s-1) v_{sJs\cup \{ s \},swt} 
+ (u_s-1) v_{sJs \cup \{ s \},wt}
$
hence $G'_t G_s v_{J,w} = G_s G'_t v_{J,w} $.

If $\ell(sw) = \ell(w)-1 = \ell(wt)$, then
\begin{itemize}
\item either $\ell(swt) = \ell(wt)-1 = \ell(sw)-1$, in which case
$$
\begin{array}{lcl}
G_s G'_t v_{J,w} 
&=& G_s (
v_{J,wt} +(u_t-1) v_{J \cup \{ wtw^{-1} \},wt}  + (u_t-1)v_{J \cup \{ wtw^{-1}\} ,w}) \\
&=& 
 v_{sJs,swt} + (u_s-1) v_{sJs\cup \{ s \},swt} + (u_s-1) v_{sJs \cup \{ s \},wt} 
+(u_t-1)(v_{sJs \cup \{ swtw^{-1} s\} ,swt}\\ &+ &  (u_s-1) v_{sJs\cup \{ s \} \cup \{ swtw^{-1}s \},swt} + (u_s-1) v_{sJs \cup \{ s \} \cup \{ swtw^{-1}s \},wt})
\\&+ & (u_t-1)(
v_{sJs \cup \{ swtw^{-1}s \},sw} + (u_s-1) v_{sJs\cup \{ s \}\cup \{ swtw^{-1}s \},sw} \\ & +& (u_s-1) v_{sJs \cup \{ s \}\cup \{ swtw^{-1}s \},w})
\end{array}
$$
 and
$$
\begin{array}{lcl}
G'_tG_s v_{J,w} &=& G'_t(v_{sJs,sw} + (u_s-1) v_{sJs\cup \{ s \},sw} + (u_s-1) v_{sJs \cup \{ s \},w} ) \\
&=&
v_{sJs,swt} +(u_t-1) v_{sJs \cup \{ swtw^{-1}s \},swt}  + (u_t-1)v_{sJs \cup \{ swtw^{-1}s\} ,sw}
\\ & +&  (u_s-1) (
v_{sJs \cup \{ s \},swt} 
+(u_t-1) v_{sJs \cup \{ s \} \cup \{ swtw^{-1}s \},swt}  + (u_t-1)v_{sJs \cup \{ s \} \cup \{ swtw^{-1}s\} ,sw})
\\ &+ &  (u_s-1) (
v_{sJs \cup \{ s \},wt} +(u_t-1) v_{sJs \cup \{ s \} \cup \{ wtw^{-1} \},wt}  + (u_t-1)v_{sJs \cup \{ s\} \cup \{ wtw^{-1}\} ,w})
\end{array}
$$
Therefore, these terms are equal as soon as we have
$$
 v_{sJs \cup \{ s \} \cup \{ swtw^{-1}s \},wt}
 +  v_{sJs \cup \{ s \}\cup \{ swtw^{-1}s \},w}
 = 
  v_{sJs \cup \{ s \} \cup \{ wtw^{-1} \},wt}  + v_{sJs \cup \{ s\} \cup \{ wtw^{-1}\} ,w}.
  $$
Since $$\overline{sJs \cup \{ s \} \cup \{ swtw^{-1}s \}} = \overline{sJs \cup \{ s \} \cup \{ wtw^{-1} \}}$$
this holds true.
\item or $\ell(swt) = \ell(w)$. But since $\ell(sw) = \ell(wt)$ this implies $sw = wt$.
Then 
$$
\begin{array}{lcl}
G_s G'_t v_{J,w} &=& G_s(v_{J,wt} +(u_t-1) v_{J \cup \{ wtw^{-1} \},wt}  + (u_t-1)v_{J \cup \{ wtw^{-1}\} ,w}) \\ &=& 
v_{sJs,swt} +(u_t-1) v_{sJs \cup \{ swtw^{-1}s^{-1} \},swt}  + (u_t-1)(
v_{sJs\cup \{ swtw^{-1}s^{-1}\} ,sw}\\ & & + (u_s-1) v_{sJs\cup \{ s \}\cup \{ swtw^{-1}s^{-1}\} ,sw} + (u_s-1) v_{sJs \cup \{ s \}\cup \{ swtw^{-1}s^{-1}\} ,w}
)
\end{array}$$
and 
$$
\begin{array}{lcl}
G'_t G_s v_{J,w} &=& G'_t (v_{sJs,sw} + (u_s-1) v_{sJs\cup \{ s \},sw} + (u_s-1) v_{sJs \cup \{ s \},w} ) \\
&=& 
v_{sJs,swt} 
+ (u_s-1) v_{sJs\cup \{ s \},swt} 
+ (u_s-1) G'_t v_{sJs \cup \{ s \},w}  \\
&=&
v_{sJs,swt} 
+ (u_s-1) v_{sJs\cup \{ s \},swt} 
+ (u_s-1)(
v_{sJs \cup \{ s \},wt}\\ & & +(u_s-1) v_{sJs \cup \{ s \} \cup \{ wtw^{-1} \},wt}  + (u_t-1)v_{sJs \cup \{ s \} \cup \{ wtw^{-1}\} ,w}
)
\end{array}
$$
 Since $sw = wt$ implies $swt = w$, $s = wtw^{-1}$, $swtw^{-1} s^{-1} = s$
and $u_s = u_t$, these two expressions are equal.
\end{itemize}
We thus proved that the $G_s,E_s$ commute with the $G'_t$, $E'_t$ for $s,t \in S$.

We finally define, for $K \subset \mathcal{R}$, the endomorphism $E_K \in \End(V)$
by $E_K.v_{J,w} = v_{J \cup K,w}$. For $s \in S$ we have
$G_s E_K. v_{J,w} = G_s. v_{J \cup K,w} = v_{sJs \cup sKs,sw}$
and $E_{sKs} G_s. v_{J,w} =E_{sKs} v_{sJs,sw} = v_{sKs \cup sJs,sw}$ if $\ell(sw) = \ell(w)+1$,
while 
$G_s E_K. v_{J,w} = G_s. v_{J \cup K,w} =
v_{sJs \cup sKs,sw} + (u_s-1)v_{sJs\cup sKs \cup \{ s \},sw} + (u_s-1) v_{sJs \cup sKs \cup
\{ s \},w}$
and
$E_{sKs} G_s v_{J,w} = E_{sKs}(v_{sJs,w}+(u_s-1) v_{sJs\cup \{ s \},sw} + (u_s-1)
v_{sJs \cup \{s\},w}) =
v_{sJs \cup sKs,sw} + (u_s-1)v_{sJs\cup sKs \cup \{ s \},sw} + (u_s-1) v_{sJs \cup sKs \cup
\{ s \},w}$
 it $\ell(sw) = \ell(w)-1$. This proves $G_s E_K = E_{s(K)} G_s$
 for all $s \in S$.

Now, for $s,t \in S$, we denote $m_{st}$ the order of $st$ in $W$. We let
$$\omega 
= \underbrace{sts\dots}_{m_{st}} 
= \underbrace{tst\dots}_{m_{st}}  \in W.
$$
Then,
$$
\underbrace{G_sG_tG_s\dots}_{m_{st}} v_{\emptyset,1} = 
v_{\emptyset,\underbrace{sts\dots}_{m_{st}}} = 
v_{\emptyset,\underbrace{tst\dots}_{m_{st}}} = 
\underbrace{G_tG_sG_t\dots}_{m_{st}} v_{\emptyset,1} 
$$
hence, 
writing $w$ as $t_1\dots t_r$ with $t_i \in S$, 
we
have
$$
\begin{array}{lcl}
\underbrace{G_sG_tG_s\dots}_{m_{st}} v_{J,w} &=& 
\underbrace{G_sG_tG_s\dots}_{m_{st}} G'_{t_r}\dots G'_{t_2}G'_{t_1}v_{J,1} \\
&=& 
G'_{t_r}\dots G'_{t_2}G'_{t_1} \underbrace{G_sG_tG_s\dots}_{m_{st}} v_{J,1} \\
&=& 
G'_{t_r}\dots G'_{t_2}G'_{t_1} \underbrace{G_sG_tG_s\dots}_{m_{st}} E_J v_{J,1} \\
&=& 
G'_{t_r}\dots G'_{t_2}G'_{t_1} E_{\omega J\omega^{-1}} \underbrace{G_sG_tG_s\dots}_{m_{st}}  v_{\emptyset,1} \\
&=& 
E_{\omega J\omega^{-1}} G'_{t_r}\dots G'_{t_2}G'_{t_1} \underbrace{G_sG_tG_s\dots}_{m_{st}}  v_{\emptyset,1} \\
&=& 
E_{\omega J\omega^{-1}} G'_{t_r}\dots G'_{t_2}G'_{t_1} \underbrace{G_tG_sG_t\dots}_{m_{st}}  v_{\emptyset,1} \\
&=& 
E_{\omega J\omega^{-1}}  \underbrace{G_tG_sG_t\dots}_{m_{st}} G'_{t_r}\dots G'_{t_2}G'_{t_1} v_{\emptyset,1} \\
&=& 
E_{\omega J\omega^{-1}}  \underbrace{G_tG_sG_t\dots}_{m_{st}}  v_{\emptyset,w} \\
&=& 
\underbrace{G_tG_sG_t\dots}_{m_{st}} E_{J}    v_{\emptyset,w} \\
&=& 
\underbrace{G_tG_sG_t\dots}_{m_{st}} E_{J}    v_{J,w} 
\end{array}
 $$
From this we get that the map $g_s \mapsto G_s$, $e_J \mapsto E_J$
induces a $\kk$-algebra homomorphism $\mathcal{C}_W(\underline{u}) \to \End(V)$.
We let $A$ denote its image. Since the $e_{\overline{J}}g_w$ span $\mathcal{C}_W(\underline{u})$
and their image maps $v_{\emptyset,1}$ to $v_{\overline{J},w}$ we get that
this homomorphism is injective, and that its image surjects onto the free $\kk$-module $V$
under the map $a \mapsto a. v_{\emptyset,1}$. This proves the claim.
\end{proof}

\subsection{An extension of the Iwahori-Hecke algebra}

The algebra $\mathcal{C}_W(\underline{u})$ is an extension of the Iwahori-Hecke algebra $H_W(\underline{u})$.
We let $T_s,s \in S$ denote the natural generators of $H_W(\underline{u})$, and $T_w = T_{s_1}\dots T_{s_r}$
when $w = s_1\dots s_r$ is a reduced expression of $w \in W$.

\begin{proposition} \label{prop:exthecke} Let $(W,S)$ denote a Coxeter system.
\begin{enumerate}
\item The map $g_s \mapsto T_s$, $e_s \mapsto 1$ induces a surjective $\kk$-algebra morphism $\mathfrak{p} : \mathcal{C}_W(\underline{u}) \onto H_W(\underline{u})$.
For $w \in W$, it maps $g_w$ to $T_w$ and each $e_J$ to $1$. Its kernel is the two-sided ideal generated by the $e_s-1$, $s \in S$.
\item If $S$ is finite, then $\mathfrak{p}$ is split. A splitting is given by $T_w \mapsto g_w e_S$, with $e_S = e_W = \prod_{s \in S} e_s$.
\end{enumerate}
\end{proposition}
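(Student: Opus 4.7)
For part (1), my plan is to verify that the assignment $g_s \mapsto T_s$, $e_s \mapsto 1$ respects each of the defining relations of $\mathcal{C}_W(\underline{u})$. Relations (2)--(5) become trivial identities in $H_W$ once every $e_t$ is set to $1$; the braid relations (1) hold for the $T_s$'s by the definition of $H_W$; and the quadratic relation (6) collapses exactly to the Iwahori--Hecke quadratic relation. This yields a surjective $\kk$-algebra morphism $\mathfrak{p}$ with $\mathfrak{p}(g_w) = T_w$ and $\mathfrak{p}(e_J) = 1$ for every $w \in W$ and every $J \in \mathcal{P}_f(\mathcal{R})$.

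For the kernel, one direction $I \subseteq \ker\mathfrak{p}$ is immediate. For the reverse, Theorem~\ref{thm:main} furnishes the basis $\{e_{\bar{J}} g_w\}$ of $\mathcal{C}_W(\underline{u})$, on which $\mathfrak{p}$ acts by $e_{\bar{J}}g_w \mapsto T_w$; since $\{T_w\}$ is a basis of $H_W$, the kernel is the $\kk$-span of the differences $(e_{\bar{J}}-1)g_w$ (using $e_{\emptyset}=1$). For any reflection $t = wsw^{-1} \in \mathcal{R}$ with $s \in S$, iterating relation (5) together with invertibility of the $g_s$'s gives $e_t - 1 = g_w(e_s-1)g_w^{-1} \in I$, and the telescoping $e_{t_1}\cdots e_{t_k} - 1 = \sum_i e_{t_1}\cdots e_{t_{i-1}}(e_{t_i}-1)$ then places every $e_{\bar{J}}-1$ in $I$, giving $\ker\mathfrak{p} \subseteq I$.

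For part (2), the pivotal observation is that, when $S$ is finite, $e_S$ is a central idempotent of $\mathcal{C}_W(\underline{u})$. Idempotency follows at once from (2)--(3), and commutation with every $e_t$ is relation (3). Commutation with $g_s$ comes from (5): $g_s e_S = e_{sSs} g_s$, and since $\langle sSs \rangle = s\langle S\rangle s = W = \langle S\rangle$, the proposition characterizing $\sim$ by reflection subgroups yields $e_{sSs} = e_S$, whence $g_s e_S = e_S g_s$. Since the $g_s$ and $e_s$ generate $\mathcal{C}_W(\underline{u})$, this proves centrality.

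I then define $\sigma : H_W(\underline{u}) \to \mathcal{C}_W(\underline{u})$ by $T_s \mapsto g_s e_S$ and verify the Hecke relations on these images. Braid relations: $(g_{s_1} e_S)(g_{s_2} e_S)\cdots = (g_{s_1} g_{s_2}\cdots) e_S$ by centrality and idempotency of $e_S$, so they reduce to the braid relations already holding for the $g_s$. Quadratic relation: $(g_s e_S)^2 = g_s^2 e_S = (1 + (u_s-1)e_s(1+g_s))e_S = u_s e_S + (u_s-1)(g_s e_S)$, absorbing $e_s e_S = e_S$ (since $s \in S$) and using centrality; this is the Iwahori--Hecke quadratic relation inside the subalgebra $\mathcal{C}_W(\underline{u}) e_S$ whose unit is $e_S$. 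Hence $\sigma$ extends to a $\kk$-algebra morphism with $\sigma(T_w) = g_w e_S$, and $\mathfrak{p}\circ\sigma(T_s) = \mathfrak{p}(g_s e_S) = T_s$ gives $\mathfrak{p}\circ\sigma = \mathrm{id}_{H_W}$, so $\sigma$ is the desired splitting. The main obstacle is the centrality of $e_S$: without the identity $e_{sSs}=e_S$ provided by the classification of the equivalence classes of $\sim$, the map $\sigma$ would fail to be multiplicative.
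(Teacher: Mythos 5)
Your proof is correct and follows essentially the same route as the paper's: for part (1) you check the defining relations and reduce the kernel computation to showing $e_t-1$ lies in the ideal for every reflection $t$ by conjugating $e_s-1$ with the $g_w$'s, and for part (2) you verify that $e_S$ is a central idempotent so that $T_s\mapsto g_se_S$ satisfies the Hecke relations in the corner algebra $\mathcal{C}_W(\underline{u})e_S$. The only differences are cosmetic — you replace the paper's induction on the minimal number of generating reflections by a telescoping sum, and you spell out the centrality of $e_S$, which the paper merely asserts; note that both you and the paper invoke invertibility of the $g_s$ (which tacitly requires $u_s\in\kk^{\times}$), an assumption one can avoid by observing that relation (4) already forces $e_{st_1s}=e_{t_1}$ modulo the ideal generated by the $e_s-1$.
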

\begin{proof}
One gets that the map $g_s \mapsto T_s$, $e_s \mapsto 1$ induces a morphism of (unital) $\kk$-algebras $\mathfrak{p} : \mathcal{C}_W(\underline{u}) \to H_W(\underline{u})$,
by checking that the defining relations of $\mathcal{C}_W(\underline{u})$ hold inside $H_W(\underline{u})$. This is immediate for relations (1)-(5),
and (6) is mapped to the defining relation $T_s^2 = u_s+(u_s-1)T_s$ of $H_W(\underline{u})$. This morphism is surjective because the $T_s$'s generate $H_W(\underline{u})$
as a unital $\kk$-algebra. By definition of $g_w$ and $T_w$ it is clear that $\mathfrak{p}(g_w) = T_w$ for all $w \in W$,
and similarly that $\mathfrak{p}(e_J) = 1$ for all $J$'s.
By theorem \ref{thm:main} we know that $\mathcal{C}_W(\underline{u})$ is spanned by the $g_w e_J$, with $w \in W$ and $J \in \mathcal{P}_f(\mathcal{R})$.
An element $x \in \Ker \mathfrak{p}$ can be written $\sum_{w,J} a_{w,J} g_w e_J$ with $a_{w,J} \in \kk$ almost all zero,
such that $0 = \sum_{w,J} a_{w,J} T_w = \sum_w \left(\sum_J a_{w,J}\right)T_w$. Let us fix $w \in W$, and let $b_J =a_{w,J}$. We have
$\sum b_J =0$ since the $T_w$'s form a basis of $H_W(\underline{u})$, so it is sufficient to prove that every element in $x \in \mathfrak{p}$
of the form $\sum_J b_J e_J$  belongs to the ideal $\mathfrak{I}$ generated by the $e_s-1, s \in S$. 
This amounts to saying that $e_J -1 \in \mathfrak{I}$ for all $J$. Letting $r(W_0)$ denotes the minimal number of reflections needed for generating $W_0$,
we prove this by induction on $r(\langle J \rangle)$. The case $r(\langle J \rangle)=0$ is obvious, the case $r(\langle J \rangle) = 1$
is a consequence of $g_w (e_s-1)g_w^{-1} = e_{wsw^{-1}}-1$ for all $w \in W$ and $s \in S$. Now, if $r(\langle J \rangle)> 1$, there exists
$t \in J$ such that $r(\langle K \rangle) < r(\langle J \rangle)$, where $K = J \setminus \{ t \}$. Again because 
$g_w (e_J-1)g_w^{-1} = e_{wJw^{-1}}-1$, we can assume $s \in S$. Then, $e_J = e_K e_s$ and $e_J-1 = e_K(e_s-1)+e_K-1 \in e_K-1 + \mathfrak{I}$,
so we get $e_J-1 \in \mathfrak{I}$ by the induction assumption. This completes the proof of (1).

In order to prove (2), we first note that $e_W$ is central and idempotent.
We prove that $T_s \mapsto g_s e_W$, $1 \mapsto e_W$ induces an algebra morphism. Since $e_W$ is central, the braid relations $T_s T_t T_s \dots = T_tT_sT_t \dots$
are mapped to $e_W^{m_{st}}g_s g_t g_s \dots = e_W^{m_{st}}g_t g_s g_t \dots$
and this holds true inside $\mathcal{C}_W(\underline{u})$. The quadratic relation $T_s^2 = (u_s-1)T_s + u_s$
is mapped to $g_s^2 e_W = (u_s-1)g_s e_W + u_se_W$. We prove that this holds true, because the
relation $g_s^2 = 1 + (u_s -1)e_s(1+g_s)$ implies
$g_s^2 e_s = e_s + (u_s-1)e_s (1+g_s) = u_s e_s + (u_s-1)g_s$ and therefore,
since $e_se_W = e_W$, we get $g_s^2 e_W = u_s e_W + (u_s-1)g_s e_W$. Therefore
there exists a $\kk$-algebra morphism $\mathfrak{q} : H_W(\underline{u}) \to \mathcal{C}_W(\underline{u})$,
which maps $T_w$ to $g_w e_W$ as is readily checked by induction on $\ell(w)$. We have
$\mathfrak{p}(\mathfrak{q}(T_w)) = \mathfrak{p}(g_w e_W) = T_w$, and this proves (2).

\end{proof}

\subsection{Meaningful quotients}
\label{sect:meaning}

We recall that $\mathcal{W}$ denotes the collection of finitely generated reflection subgroups of $W$,
endowed with the conjugation action of $W$. If $J \in \mathcal{P}_f(\mathcal{R})$,
we let $e_{\langle J \rangle} = e_{\overline{J}} = e_J$.
The algebra $\mathcal{C}_W(\underline{u})$ is spanned by elements $e_{\langle J \rangle} g_w$
for $w \in W$ and $\langle J \rangle \in \mathcal{W}$. Let $\mathcal{F}$ be a $W$-set
and $p = \mathcal{W} \to \mathcal{F}$
be a surjective map which is $W$-equivariant. Such a map can be seen
as an equivalence relation on $\mathcal{W}$ compatible with the
action of $W$. We also assume that $p(\langle J\rangle) = p(\langle K\rangle)$
implies $p(\langle J,s\rangle) = p(\langle K,s\rangle)$ for all $s \in S$.

\begin{proposition} Let $p : \mathcal{W} \onto \mathcal{F}$
be as above,
and $I_p$ the ideal of $\mathcal{C}_W(\underline{u})$
generated by the $e_{J} - e_{K}$ for $p(J) = p(K)$.
The quotient algebra $\mathcal{C}_W^{\mathcal{F}}(\underline{u}) = \mathcal{C}_W(\underline{u})/I_p$ is a free module,
of rank $|W|.|\mathcal{F}|$ if $W$ is finite. The algebra morphism $\mathfrak{p} : \mathcal{C}_W(\underline{u}) \onto H_W(\underline{u})$
factorizes through the natural projection $\mathcal{C}_W(\underline{u}) \onto \mathcal{C}_W^{\mathcal{F}}(\underline{u})$.
\end{proposition}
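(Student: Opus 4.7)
The plan is to transfer the module-theoretic argument of Theorem~\ref{thm:main} to the quotient $\mathcal{C}_W^{\mathcal{F}}(\underline{u})$. I would first introduce the free $\kk$-module $V^{\mathcal{F}}$ with basis $(v_{f,w})_{(f,w)\in \mathcal{F}\times W}$, together with the natural projection $V\onto V^{\mathcal{F}}$, $v_{J,w}\mapsto v_{p(\langle J\rangle),w}$, where $V$ is the module built in the proof of Theorem~\ref{thm:main}. The first task is to check that the endomorphisms $G_s,E_s,G'_s,E'_s$ defined there descend to $\End(V^{\mathcal{F}})$. All these operators modify the first index only by conjugation by a reflection in $S$ (or in $wSw^{-1}$) and by union with one such reflection, so descent follows at once from the $W$-equivariance of $p$ together with the standing hypothesis that $p(\langle J\rangle)=p(\langle K\rangle)$ implies $p(\langle J\cup\{s\}\rangle)=p(\langle K\cup\{s\}\rangle)$ for every $s\in S$.

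Since all the relations verified in the proof of Theorem~\ref{thm:main} are consequences of identities in $\End(V)$ that survive the projection to $\End(V^{\mathcal{F}})$, the assignment $g_s\mapsto G_s$, $e_s\mapsto E_s$ still yields a $\kk$-algebra morphism $\mathcal{C}_W(\underline{u})\to\End(V^{\mathcal{F}})$. The core step is then to show that this morphism kills the ideal $I_p$: if $J_0\in\mathcal{P}_f(\mathcal{R})$ is any chosen representative of $f\in\mathcal{F}$, one has $E_J\cdot v_{f,w}=v_{p(\langle J_0\cup J\rangle),w}$, hence it suffices to prove $p(\langle J_0\cup J\rangle)=p(\langle J_0\cup K\rangle)$ whenever $p(\langle J\rangle)=p(\langle K\rangle)$. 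I would establish this by induction on $|J_0|$, adding one reflection $t=wsw^{-1}$ at a time; conjugation by $w$ reduces each inductive step to the hypothesis applied to the simple reflection $s$ via $W$-equivariance of $p$. This is the only genuinely new point and, accordingly, the main (though mild) obstacle.

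Once the action factors through $\mathcal{C}_W^{\mathcal{F}}(\underline{u})$, the rank statement is a direct consequence. By Theorem~\ref{thm:main}, $\mathcal{C}_W(\underline{u})$ is spanned by the $e_{\bar J}g_w$, so $\mathcal{C}_W^{\mathcal{F}}(\underline{u})$ is spanned by a family $\{e_f g_w\}_{(f,w)\in\mathcal{F}\times W}$, where $e_f$ denotes the common image of $e_{\bar J}$ for any $J$ with $p(\langle J\rangle)=f$. A direct check along the lines of the end of the proof of Theorem~\ref{thm:main} shows that $e_f g_w$ sends $v_{p(\{1\}),1}\in V^{\mathcal{F}}$ to $v_{f,w}$, so the $e_f g_w$ are linearly independent and form a basis; when $W$ is finite this yields rank $|W|\cdot|\mathcal{F}|$. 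Finally, the factorization of $\mathfrak{p}$ is immediate from Proposition~\ref{prop:exthecke}: since $\mathfrak{p}(e_J)=1$ for every $J$, each generator $e_J-e_K$ of $I_p$ lies in $\Ker\mathfrak{p}$, hence $I_p\subset\Ker\mathfrak{p}$ and $\mathfrak{p}$ factors through $\mathcal{C}_W(\underline{u})\onto\mathcal{C}_W^{\mathcal{F}}(\underline{u})$.
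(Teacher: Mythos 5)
Your proof is correct, but it follows a different mechanism from the paper's. You bound the quotient from below by descending the faithful module: you form $V^{\mathcal{F}} = V/\ker(v_{J,w}\mapsto v_{p(\langle J\rangle),w})$, check that the operators $G_s,E_s,G'_s,E'_s$ of Theorem \ref{thm:main} preserve the kernel (using equivariance of $p$ plus the hypothesis on adjoining a simple reflection, transported to arbitrary reflections $wsw^{-1}$ by conjugation), verify that the images of the generators $e_J-e_K$ of $I_p$ annihilate $V^{\mathcal{F}}$, and then read off linear independence of the $e_fg_w$ from their action on $v_{p(\{1\}),1}$. The paper instead bounds the ideal from above, entirely inside the algebra: it observes that the $\kk$-span $I'_p$ of the elements $(e_{\langle J\rangle}-e_{\langle K\rangle})g_w$ is already stable under left and right multiplication by the generators $e_s$ and $g_s$ (the same two hypotheses on $p$ enter at exactly this point, together with the quadratic relation to handle $I'_pg_s$), hence $I_p=I'_p$ is a span of differences of basis vectors of the free module of Theorem \ref{thm:main}, and the quotient is visibly free on the equivalence classes. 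The paper's route is shorter and yields as a by-product an explicit $\kk$-module description of $I_p$; yours reuses the representation-theoretic machinery already built and makes the quotient module concrete, at the cost of re-checking descent of all the operators. The inductive lemma you single out (that $p(\langle J\rangle)=p(\langle K\rangle)$ implies $p(\langle J_0\cup J\rangle)=p(\langle J_0\cup K\rangle)$, proved by adjoining one reflection $t=wsw^{-1}$ at a time and conjugating back to $S$) is indeed the one genuinely new verification, and it is sound; it plays the same role as the stability computation $e_sI'_p\subset I'_p$, $g_wI'_pg_w^{-1}\subset I'_p$ in the paper. Your treatment of the rank and of the factorization of $\mathfrak{p}$ through the projection agrees with the paper's.
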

\begin{proof}
Let $I'_p$ be the $\kk$-module spanned by the 
$(e_{\langle J\rangle} - e_{\langle K\rangle})g_w$ for $w \in W$ and
$p(\langle J\rangle) = p(\langle K\rangle)$.
Since $p(\langle J\rangle) = p(\langle K\rangle)$
implies $p(\langle J,s\rangle) = p(\langle K,s\rangle)$ we know that
$e_sI'_p \subset I'_p$ for all $s \in S$;
since $p$ is equivariant we have $g_w I'_p g_w^{-1} \subset I'_p$ for all $w \in W$
and therefore $I'_p e_s \subset I'_p$. From this and the defining relation (6) of $\mathcal{C}_W(\underline{u})$
we get $I'_p g_s \subset I'_p$ for all $s \in S$, and $g_s I'_p = g_s I'_p g_s^{-1}.g_s \subset I'_p$.
Therefore $I'_p$ is an ideal. Since $I'_p \subset I_p$ we get $I_p = I'_p$ hence $I_p$
is spanned by the 
$(e_{\langle J\rangle} - e_{\langle K\rangle})g_w$ for $w \in W$ and
$p(\langle J\rangle) = p(\langle K\rangle)$. The assertion on the structure as a module and the rank then follows from the previous theorem.
The factorization assertion is clear from the definition of $I_p$ and proposition \ref{prop:exthecke}.
\end{proof}

Important examples of such $p$ are the following ones :
\begin{enumerate}
\item $\mathcal{F}=\mathcal{F}_{parab}$ is the collection of parabolic subgroups, and the map $p$
associates to $G \in \mathcal{W}$ the fixer of the fixed-point set $\{ x \in \R^n ; \forall g \in G
\ \ g. x =x \}$
\item If $W$ is the Weyl group of a reduced root system $R$, then
$\mathcal{W}$ can be identified with the collection of root subsystems of $R$. Then, 
one can take for $\mathcal{F}=\mathcal{F}_{closed}(R)$ the collection of closed symmetric subsystems,
and for $p$ the map which associate to a root subsystem its closure.
\end{enumerate}

The first example arises for arbitrary groups, and is the smaller of the two types,
when both can be compared : there is a natural surjective map $\mathcal{F}_{closed}(R) \to
\mathcal{F}_{parab}$ which is not bijective in general (e.g. see $A_2$ as the set of long roots inside $G_2$).
The second one is the one which is the
most relevant to the original Yokonuma-Hecke algebra $\mathcal{H}(G,U)$, as
$\mathcal{C}_W^{\mathcal{F}_{closed}(R)}$ `generically' embeds into $\mathcal{H}(G,U)$
(see section \ref{sect:YHvintage}). For short, we let $\mathcal{C}_W^R(\underline{u}) =  \mathcal{C}_W^{\mathcal{F}_{closed}(R)}(\underline{u})$
and $\mathcal{C}_W^p(\underline{u}) =  \mathcal{C}_W^{\mathcal{F}_{parab}}(\underline{u})$.

Note that, when $W$ has type $A_n$, and $R$ is the root system of type $A_n$, then
$\mathcal{C}_W(\underline{u})
= \mathcal{C}_W^{R}(\underline{u})=\mathcal{C}^{p}_W(\underline{u})$.
Moreover, in general the morphism onto $H_W(\underline{u})$ factorizes as follows
$$
\mathcal{C}_W(\underline{u}) \onto \mathcal{C}^{R}_W(\underline{u}) \onto \mathcal{C}^p_W(\underline{u}) \onto H_W(\underline{u}).
$$

\subsection{Lusztig's involution and Kazhdan-Lusztig bases}

Our basic reference on Kazhdan-Lusztig bases is \cite{SOERGEL}, although it deals only with the 1-parameter case, but the properties
that we use here are easily generalized from this case. The general statements can also be found in \cite{BONNAFE} (see also \cite{LUSZTIGBOOK} for an intermediate
situation).
We choose $\kk = \Z[v_s,v_s^{-1} ; s \in S]$, where there is one formal parameter
$v_s$ for each $s \in S$, with the condition $v_s = v_t$ if $s \sim t$. For short, we denote it $\kk = \Z[v,v^{-1}]$. We set $u_s = v_s^2$ for each $s \in S$.
We set $H_s = (-v_s^{-1})g_s$ for $s \in S$ and 
$H_w = H_{s_1} \dots H_{s_m}$ for $w \in W$ and $w = s_1\dots s_m$ a minimal
decomposition. Let $H_w^0$ denote its image inside the Hecke algebra $H_W$.
We have $(H_s^0)^2 = 1 + (v_s^{-1} - v_s)H_s^0$, and Lusztig's involution is
defined on $H_W$ by $H_w^0 \mapsto (H_{w^{-1}}^0)^{-1}$, $v_s \mapsto v_s^{-1}$.
The following equalities are easily checked
{}
$$
\begin{array}{lcl}
H_s^{-1} & =& v_s^2 H_s + (v_s-v_s^{-1}) e_s (1- v_s H_s) \\
H_s^2 &=& v_s^{-2} + (v_s-v_s^{-1}) e_s (v_s^{-1} - H_s) \\
H_s^{-2} &=& v_s^2 + (v_s^{-1} - v_s)e_s (v_s- H_s^{-1})
\end{array}
$$
Moreover, we have $H_s e_{W_0} H_s^{-1} = e_{s W_0 s^{-1}} = H_s^{-1} e_{W_0} H_s$.
From this the following proposition readily follows.

\begin{proposition} There exists an involutive ring automorphism of $\mathcal{C}_W$ over $\kk = \Z[v,v^{-1}]$ which maps
$v_s \mapsto v_s^{-1}$, $H_w \mapsto H_{w^{-1}}^{-1}$, $e_{W_0} \mapsto e_{W_0}$ for each $w \in W$ and $W_0 \in \mathcal{W}$.
It induces similar automorphisms of $\mathcal{C}_W^p$ and $\mathcal{C}_W^R$ (when defined).
 It is compatible with the ring automorphism of $\Z[v,v^{-1}] B$
which maps $v_s \mapsto v_s^{-1}$, $s \mapsto s^{-1}$ for $s \in S$, and with Lusztig's involution of $H_W$ (as in \cite{SOERGEL,LUSZTIGBOOK}),
that is the following diagram commutes, where the vertical maps are these involutive automorphisms and the horizontal
ones are the natural maps.
$$\xymatrix{
 \Z[v,v^{-1}] B \ar[r] \ar[d] & \mathcal{C}_W \ar[r] \ar[d] & H_W  \ar[d] \\
  \Z[v,v^{-1}] B \ar[r] & \mathcal{C}_W \ar[r]  & H_W  \\
}
$$
\end{proposition}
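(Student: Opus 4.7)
The plan is to construct the automorphism $\iota$ by specifying it on generators and checking the defining relations are preserved. Set $\iota(v_s) = v_s^{-1}$ (so $\iota(u_s) = u_s^{-1}$), $\iota(e_t) = e_t$ for every $t \in \mathcal{R}$, and $\iota(g_s) = g_s^{-1}$ for every $s \in S$. Involutivity $\iota^2 = \Id$ is then immediate on generators, so it suffices to verify that $\iota$ respects the defining relations of $\mathcal{C}_W(\underline{u})$. Relations (2), (3), (4) involve only the $e_t$'s and are fixed pointwise. Relation (1), the braid relation among the $g_s$, maps under $\iota$ to the same relation with each $g_s$ replaced by $g_s^{-1}$, which follows from the original by inverting both sides. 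Relation (5), $g_s e_t = e_{sts} g_s$, conjugates by $g_s$ into $e_t = g_s^{-1} e_{sts} g_s$, and taking $t \mapsto sts$ (and using $s^2=1$) yields $g_s^{-1}e_t = e_{sts} g_s^{-1}$, which is exactly $\iota$ of (5).

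The only nontrivial check is relation (6), namely
$$g_s^{-2} = 1 + (u_s^{-1} - 1)\, e_s(1+g_s^{-1}).$$
I would start from the formula $g_s^{-1} = g_s + (u_s^{-1}-1)\,e_s(1+g_s)$, already noted in the $Y_{d,n}(u)$ subsection and easily derived from relation (6) using that $e_s$ commutes with $g_s$ (a consequence of relation (5) with $t=s$). Squaring this expression, and repeatedly invoking $e_s^2 = e_s$ and $e_sg_s = g_s e_s$, one computes $g_s^{-2}$ and collects terms; using $(u_s^{-1}-1) + (u_s^{-1}-1)^2 = u_s^{-1}(u_s^{-1}-1)$ and $u_s^{-2}-u_s^{-1} = u_s^{-1}(u_s^{-1}-1)$ both sides reduce to $1 + u_s^{-1}(u_s^{-1}-1)\,e_s(1+g_s)$. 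A conceptual explanation (and the main intuition behind the identity) is the decomposition $g_s = (1-e_s)g_s + e_sg_s$ into orthogonal pieces: on $(1-e_s)\mathcal{C}_W$ one has $g_s^2 = 1-e_s$, so $g_s$ is its own inverse; on $e_s\mathcal{C}_W$ the element $e_sg_s$ satisfies the Iwahori–Hecke quadratic relation $(e_sg_s)^2 = u_s e_s + (u_s-1)e_sg_s$, and the usual inversion $T_s \mapsto T_s^{-1}$, $u_s \mapsto u_s^{-1}$ in Iwahori–Hecke produces exactly the image relation. Once relation (6) is verified, the universal property of $\mathcal{C}_W(\underline{u})$ yields the desired ring homomorphism $\iota$, and the involutivity check above promotes it to an involutive automorphism.

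The remaining assertions are then formal. For $H_w \mapsto H_{w^{-1}}^{-1}$, compute $\iota(H_s) = \iota(-v_s^{-1}g_s) = -v_s g_s^{-1} = (-v_s^{-1}g_s)^{-1} = H_s^{-1}$, and for $w = s_1\dots s_m$ a reduced expression one has $w^{-1} = s_m\dots s_1$ reduced, so
$$\iota(H_w) = \iota(H_{s_1})\dots\iota(H_{s_m}) = H_{s_1}^{-1}\dots H_{s_m}^{-1} = (H_{s_m}\dots H_{s_1})^{-1} = H_{w^{-1}}^{-1}.$$
For $e_{W_0} \mapsto e_{W_0}$, recall $e_{W_0} = e_J$ for any $J$ with $\langle J\rangle = W_0$, and $\iota$ fixes each $e_t$. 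Since $\iota$ fixes every $e_J$, it preserves the ideals defining the quotients $\mathcal{C}_W^p$ and $\mathcal{C}_W^R$ and therefore descends. The ring automorphism $s \mapsto s^{-1}$, $v_s \mapsto v_s^{-1}$ of $\Z[v,v^{-1}]B$ is intertwined with $\iota$ by the natural map $\Z[v,v^{-1}]B \to \mathcal{C}_W$ because $s$ goes to $g_s$ and $\iota(g_s) = g_s^{-1}$, the image of $s^{-1}$. Compatibility with Lusztig's involution on $H_W$ follows in the same way, since the natural map sends $H_w \in \mathcal{C}_W$ to $H_w \in H_W$ and both involutions send $H_w$ to $H_{w^{-1}}^{-1}$ and act as $v_s \mapsto v_s^{-1}$ on the ground ring. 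The main obstacle is therefore the verification of relation (6); once that explicit identity is in hand, everything else is bookkeeping.
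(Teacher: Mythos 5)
Your proof is correct and follows essentially the same route as the paper: the paper simply records the identities $H_s^{\pm 2}=v_s^{\mp 2}+(v_s-v_s^{-1})^{\pm 1}e_s(v_s^{\mp 1}-H_s^{\pm 1})$ and $H_s e_{W_0}H_s^{-1}=e_{sW_0s^{-1}}=H_s^{-1}e_{W_0}H_s$ and asserts that the proposition "readily follows," which amounts to exactly your verification that the defining presentation is preserved under $g_s\mapsto g_s^{-1}$, $e_t\mapsto e_t$, $v_s\mapsto v_s^{-1}$. Your explicit check of relation (6), including the orthogonal decomposition $g_s=(1-e_s)g_s+e_sg_s$, is just a more detailed rendering of the same key identity.
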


Assume that all reflections of $W$ are conjugate. Recall from \cite{SOERGEL} that the Kazhdan-Lusztig basis $(\mathbf{H}^0_w)_{w \in W}$ of $H_W$ is characterised
by the properties $\overline{ \mathbf{H}^0}_w = \mathbf{H}^0_w$  and
$\mathbf{H}^0_w \in H^0_w + \sum_{y \in w} v \Z[v] H^0_y$. One readily checks that,
for $\mathbf{H}_s \in C_W$ and $W = A_1$, the conditions $\overline{\mathbf{H}_s} = \mathbf{H}_s$
and $\mathbf{H}_s \in H_s + \sum_{y \in W, W_0 < W} v \Z[v] H_y e_{W_0}$
are equivalent to saying that $\mathbf{H}_s = H_s + (xv + v^2)(1-e_s) H_s + v e_s$ for
\emph{some} $x \in \Z$. Note that such a $\mathbf{H}_s$ clearly maps onto $\mathbf{H}^0_s = H_s + v$.
The question of whether the Kazhdan-Lusztig basis can be `lifted' in a natural and essentially unique way
is therefore an intriguing one, that we leave open for now.

\subsection{Combinatorics and Bell numbers}
\label{sect:Bell}

In type $A_{n-1}$, reflections have the form $(i,j), 1 \leq i<j \leq n$, and therefore a subset of $\mathcal{R}$
can be identified with a graph on $n$ vertices. If $J \subset \mathcal{R}$, then $\bar{J}$ is the graph
of the transitive closure of the graph given by $J$, and the set of all graphs of this form
is the set of disjoint unions of complete graphs on $\{1,\dots, n \}$. This set is in natural 1-1 correspondence
with partitions of the set $\{1,\dots, n\}$, and therefore has for cardinality the
$n$-th Bell number $Bell_n$ : $1,1,2,5,15,52,203,877,\dots$. Because of this, we will call
in general the Bell number of type $W$ the number of reflection subgroups of $W$,
and we will call $W$-partitions the elements $\bar{J}$, $J \subset \mathcal{R}$.

In type $D_n$, it can be interpreted as the number of symmetric partitions
of $\{-n,\dots,n \} \setminus \{ 0 \}$ such that none of the subsets is of the form
$\{j,-j \}$, see sequence A086365 in Sloane's Online Encyclopaedia of Integer Sequences. Here symmetric means that, for
every part $X$ of the partition, its opposite $-X$ is a part of the partition.

Indeed, the reflections have the form $s_{ij}$ or $s_{ij}'$,
where 
$$
\begin{array}{lcl}
s_{ij}.(z_1,\dots,z_i,\dots,z_j ,\dots, z_n)
&=& (z_1,\dots,z_j,\dots,z_i ,\dots, z_n)\\
s_{ij}'.(z_1,\dots,z_i,\dots,z_j ,\dots, z_n) &=& (z_1,\dots,-z_j,\dots,z_i ,\dots, z_n);
\end{array}
$$
then, to a stable subset $\mathcal{R}_0$ of $\mathcal{R}$
we associate the partition
of $\{-n,\dots,n \} \setminus \{ 0 \}$ made of the equivalence classes
under the relation $i \sim j$ for $ij > 0$ if $s_{ij} \in \mathcal{R}_0$,
for $ij <0$ if $s_{ij}' \in \mathcal{R}_0$. Conversely, we associate to a partition
$\mathcal{P}$ the collection of reflections made of the $s_{ij}$ for $i,j > 0$
in the same part of $\mathcal{P}$, and of the $s_{ij}'$ for $i,j > 0$ when $-i,j$ belong
to the same part of $\mathcal{P}$. These two maps provide a bijective correspondence.
An exponential generating function for this sequence is
$$
-1 + \exp\left( -x + e^{x}-1 + \frac{e^{2x}-1}{2} \right)
$$
and
the first terms are $1,4,15,75,428$. 
In type $B_n, n \geq 2$, we get the numbers
$8,38,218,1430,10514,\dots$, which we could not relate to other
mathematical objects. In type $I_2(m)$, we get $1+ \sigma(m)$,
where $\sigma(m)$ is the sum of divisors of $m$. Indeed, 
the non-trivial reflection subgroups are the stabilizer of the $d$-gons
with vertices $\exp(2 \pi \ii (\frac{k_0}{m}+\frac{k}{d}))$
for some $k_0 \in [0,m/d[$ and $k$ running from $0$ to $d-1$, and $d$
is a divisor of $m$. Since there are $m/d$ such $d$-gons for
$d$ dividing $m$, there are exactly $\sigma(m)$ non-trivial reflection subgroups.

\begin{table}
$$
\begin{array}{|c|c|c|c|c|r|}
\hline
W & |W| & \mathrm{Bell}^p(W) & \mathrm{Bell}^R(W)&\mathrm{Bell}(W) & \mathrm{rk}\, \mathcal{C}_W(\underline{u}) \\
\hline
G_2 & 12 & 8 & 12 & 13 & 156 \\
\hline
H_3 &120 &48 & & 53 & 6360 \\
\hline
H_4 & 14400& 2104& &2760 & 39744000 \\
\hline
F_4 & 1152& 268&447 & 637 & 733824 \\
\hline
E_6 & 51840& 4598&  5079 & 5079 &  263295360 \\
\hline
E_7 & 2903040 & 90408&  107911 &107911 &  313269949440 \\
\hline
E_8 &  696729600& 5506504 &  7591975 &7591975 & 5289553704960000 \\
\hline
\end{array}
$$
\caption{Bell numbers in exceptional types.}
\label{table:bellexc}
\end{table}

Among the exceptional groups, we computed the number of reflection subgroups
by using elementary methods in the computer system GAP3 together with its CHEVIE
package, except for the
largest ones $E_7$ and $E_8$, for which this was not sufficient. Therefore, we used
the classification of their reflection subgroups provided in \cite{DPR} in this case : the total
number is then the sum of the number of conjugacy classes provided in the third
columns of tables 4 and 5 of \cite{DPR}. The result can be found in table \ref{table:bellexc}.
In order to find the dimension of $\mathcal{C}^p(W)$, we need to know the number of parabolic subgroups. These are in 1-1 correspondence with the elements of the lattice of the corresponding
hyperplane arrangements, and with this interpretation they are described in \cite{OT}. We call
parabolic Bell number of type $W$ and denote $Bell^p(W)$ this number. 
Finally, when $R$ is (one of) the classical root systems
attached to $W$, we call Bell number of type $R$ and denote $Bell^{R}(W)$ the number of
closed root subsystems. If $W$ is of simply laced (ADE) type, then $Bell^{R}(W) = Bell(W)$.
For exceptional groups, both numbers are also
listed in table \ref{table:bellexc}. 
\begin{table}
$$
\begin{array}{|c||c|c|c|c|c|c|}
\hline
n & 2 & 3 & 4 & 5 & 6 & 7 \\ 
\hline
\hline
Bell(B_n) & 8 & 38 & 218 & 1430 & 10514 & 85202 \\
\hline
Bell^R(B_n) & 7 & 31 & 164 & 999  & 6841 & 51790 \\
\hline
Bell^p(B_n) & 6 & 24 &116  & 648 &4088  & 28640  \\
\hline
\hline
Bell^{(R)}(D_n) & 4 & 15 & 75 & 428 & 2781 & 20093 \\ 
\hline
Bell^p(D_n) & 4 & 15 & 72  & 403  & 2546  & 17867   \\
\hline
\end{array}
$$
\caption{Bell numbers in classical types.}
\label{table:bellcla}
\end{table}

For the infinite series $B_n$ and $D_n$, the first values are
listed in table \ref{table:bellcla}. The series $Bell^p(D_n)$ and $Bell^p(B_n)$ are investigated and presented as analogues
of Bell numbers in \cite{SUTER}. J. East communicated to us that he too generalized Bell
numbers to series $B,D$ and $I_2(m)$ (unpublished). In his approach, the `right analogues'
are $Bell^R(B_n)$, $Bell(D_n)$ and $Bell(I_2(m))$, respectively, which correspond
to the sequences A002872, A086365 and A088580 in Sloane's encyclopaedia of integer sequences.
To the best of our knowledge, the sequence $Bell(B_n)$ has not yet been investigated.

\subsection{Specialization at $u=1$ and semisimplicity}
\label{sect:spectss}

The algebra $\mathcal{C}_W(1)$ is obviously a semidirect product $\kk W \ltimes A$,
where $A$ is the subalgebra generated by the idempotents $e_{J}$.

Let $L$ be a join semilattice. That is, we have a finite partially ordered set $L$
for which there exists a least upper bound $x \vee y$ for every two $x,y \in L$.
Let $M$ be the semigroup with elements $e_{\la}, \la \in L$ and product law
$e_{\la} e_{\mu} = e_{\la \vee \mu}$. 
Such a semigroup is sometimes called a band.

If $L$ is acted upon by some group $G$
in an order-preserving way (that is $x \leq y \Rightarrow g.x \leq g.y$ for all $x,y \in L$ and $g \in G$)
then $M$ is acted upon by $G$, so that we can form the algebra $ \kk M \rtimes \kk G$. Up
to exchanging meet and joint, the algebra $\kk M$ is the Möbius algebra of \cite{STANLEY}, definition 3.9.1
(this reference was communicated to us by V. Reiner). We will need the following proposition, which is in part a $G$-equivariant version of \cite{STANLEY}, theorem 3.9.2.
Here $\kk^{L}$ is the algebra of $\kk$-valued functions on $L$, that is the direct product of a collection indexed by the elements of $L$ of copies of the $\kk$-algebra $\kk$.

\begin{proposition}  Let $M$ be the band associated to a 
finite join semilattice $L$. For every commutative ring $\kk$, the
semigroup algebra $\kk M$ is isomorphic to $\kk^{L}$.
If $L$ is acted upon
by some group $G$ as above, then $\kk M \rtimes \kk G \simeq \kk^{L} \rtimes \kk G$.
If $G$ is finite and
$\kk$ is a field whose  characteristic 
does not divide $|G|$, then the algebra $ \kk M \rtimes \kk G$ is semisimple. If $\kk G_{\la}$ is split semisimple for all $\la \in L$, where $G_{\la}<G$ is the stabilizer of $\la$, then so is $ \kk M \rtimes \kk G$.
\end{proposition}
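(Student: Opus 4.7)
The plan is to realize $\kk M$ as the algebra of functions $\kk^L$ via the ``principal up-set'' basis, to lift this identification to the semidirect product with $G$, and then to decompose orbit by orbit via a Morita equivalence with $\kk G_\lambda$.

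First I would define $\phi : \kk M \to \kk^L$ on the canonical basis $(e_\lambda)_{\lambda \in L}$ by $\phi(e_\lambda) = \chi_{\uparrow \lambda}$, the indicator function of $\{\nu \in L : \nu \geq \lambda\}$. The defining property of the join gives $\nu \geq \lambda \vee \mu$ if and only if $\nu \geq \lambda$ and $\nu \geq \mu$, so the pointwise identity $\chi_{\uparrow \lambda} \chi_{\uparrow \mu} = \chi_{\uparrow(\lambda \vee \mu)}$ shows $\phi$ is a $\kk$-algebra homomorphism. To check bijectivity I would order $L = \{\lambda_1,\dots,\lambda_n\}$ by a linear extension with larger elements first; then the matrix of $\phi$ in the canonical bases $(e_{\lambda_j})_j$ and $(\chi_{\{\lambda_i\}})_i$ has entries $a_{ij} = 1$ if $\lambda_i \geq \lambda_j$ and $0$ otherwise, hence is upper unitriangular with determinant $1$. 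This forces $\phi$ to be an isomorphism over any commutative ring $\kk$, which proves the first claim.

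Next I would incorporate the $G$-action. The rule $g \cdot e_\lambda = e_{g\lambda}$ defines a semigroup action on $M$ because $G$ preserves joins, while $G$ acts on $\kk^L$ by $(g \cdot f)(\nu) = f(g^{-1}\nu)$. Order-preservation yields $g \cdot \chi_{\uparrow \lambda} = \chi_{\uparrow g\lambda}$, so $\phi$ is $G$-equivariant and extends to a $\kk$-algebra isomorphism $\kk M \rtimes \kk G \simeq \kk^L \rtimes \kk G$.

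For the semisimplicity claims, I would decompose $L = \bigsqcup_i O_i$ into $G$-orbits, pick $\lambda_i \in O_i$ with stabilizer $G_{\lambda_i}$, and note that each $\kk^{O_i}$ is a $G$-stable ideal, giving $\kk^L \rtimes \kk G = \prod_i \kk^{O_i} \rtimes \kk G$. Setting $\epsilon_i = \chi_{\{\lambda_i\}} \in \kk^{O_i}$, the crossed product relation $\chi_{\{\lambda_i\}}\, g = g\, \chi_{\{g^{-1}\lambda_i\}}$ gives
$$
\chi_{\{\lambda_i\}}\, g\, \chi_{\{\lambda_i\}} \;=\; \begin{cases} g\, \chi_{\{\lambda_i\}} & \text{if } g \in G_{\lambda_i}, \\ 0 & \text{otherwise,} \end{cases}
$$
so that $\epsilon_i(\kk^{O_i} \rtimes \kk G)\epsilon_i \simeq \kk G_{\lambda_i}$ via $g\,\epsilon_i \leftrightarrow g$. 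Summing $g\, \epsilon_i\, g^{-1}$ over coset representatives of $G/G_{\lambda_i}$ recovers $\sum_{\nu \in O_i}\chi_{\{\nu\}} = 1 \in \kk^{O_i}$, so $\epsilon_i$ is a full idempotent and $\kk^{O_i} \rtimes \kk G$ is Morita equivalent to $\kk G_{\lambda_i}$. Since Morita equivalence preserves semisimplicity and split semisimplicity, the hypothesis $\mathrm{char}\, \kk \nmid |G|$ makes $|G_{\lambda_i}|$ invertible in $\kk$, hence $\kk G_{\lambda_i}$ semisimple by Maschke, and thus $\kk M \rtimes \kk G$ semisimple; similarly, split semisimplicity of every $\kk G_\lambda$ transfers to each factor $\kk^{O_i} \rtimes \kk G$.

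The only mildly delicate step is the upper unitriangularity in the first paragraph: it is what makes the identification $\kk M \simeq \kk^L$ valid over an arbitrary commutative ring, rather than only over a field where one would invoke Möbius inversion on the poset. The remaining orbit-decomposition and full-idempotent arguments are standard.
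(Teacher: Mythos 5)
Your proof is correct, and its overall architecture (indicator functions of principal up-sets, $G$-equivariance, orbit decomposition, reduction to the stabilizers) coincides with the paper's. The differences are in execution and are worth noting. For the isomorphism $\kk M \simeq \kk^L$, the paper proves injectivity by a minimal-counterexample argument and surjectivity by induction on the poset; your unitriangularity observation packages both at once and makes it transparent why the statement holds over an arbitrary commutative ring, which is the one genuinely delicate point. For the orbit-by-orbit analysis, the paper invokes an explicit isomorphism $\kk^X \rtimes \kk G \simeq \Mat_X(\kk G_0)$ cited from another reference, whereas you give a self-contained Morita argument: the corner $\epsilon_i(\kk^{O_i}\rtimes \kk G)\epsilon_i \simeq \kk G_{\la_i}$ together with the fullness identity $\sum_g g\epsilon_i g^{-1}=1$. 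These are two presentations of the same equivalence (the matrix-ring isomorphism is exactly what realizes your Morita equivalence concretely), and both correctly reduce semisimplicity and split semisimplicity to the group algebras $\kk G_{\la}$, the latter because a $\kk$-linear equivalence of module categories preserves endomorphism algebras of simple modules. No gaps.
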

\begin{proof}
To each $\la \in L$ we associate $\varphi_{\la} : L \to \kk$ defined by $\varphi_{\la}(\mu) = 1$ if $\la \leq \mu$
and $\varphi_{\la}(\mu) =0$ otherwise. We define a $\kk$-linear map $c : M \to \kk^{L}$
by $e_{\la} \mapsto \varphi_{\la}$. We prove that $c$ is an algebra homomorphism. We have
that $\varphi_{\la_1} \varphi_{\la_2}$ maps $\mu \in L$ to $1$ iff $\la_1 \leq \mu$ and $\la_2 \leq \mu$,
and to $0$ otherwise ; $\varphi_{\la_1 \vee \la_2}$ maps $\mu \in L$ to $1$ iff $\la_1 \vee \la_2 \leq \mu$,
and to $0$ otherwise. These two conditions being equivalent, this proves $c(e_{\la_1}e_{\la_2})=
c(e_{\la_1})c(e_{\la_2})$, hence $c$ is a $\kk$-algebra homomorphism. We now prove that $c$
is injective. We assume $\sum_{\la \in L} a_{\la} \varphi_{\la} = 0$ for a collection of $a_{\la} \in \kk$,
and we want to prove that all $a_{\la}$'s are zero. If not, let $\la_0$ be a minimal element (w.r.t. $\leq$)
among the elements of $L$ such that $a_{\la} \neq 0$. Then
$0=\sum_{\la \in L} a_{\la} \varphi_{\la}(\la_0) = a_{\la_0}$ provides a contradiction. Therefore, $c$ is injective. We now prove that $c$ is surjective. Let $f_{\la} \in \kk^{L}$
being defined by $f_{\la}(\mu) = \delta_{\la,\mu}$ (Kronecker symbol). The $f_{\la}$'s obviously form
a basis of $\kk^{L}$ and we need to prove that they belong to the image of $c$, that is to the submodule
$V$ spanned
by the $\varphi_{\la}$'s.
Let $\la_0 \in L$. We prove that $f_{\la_0}$ belongs to $V$ by induction with respect to $\leq$. If $\la_0$ is minimal in $L$, then $\varphi_{\la_0} = f_{\la_0}$ and this holds true. Now assume $f_{\la} \in V$
for all $\la < \la_0$. Let $g = f_{\la_0} - \varphi_{\la_0}$. We have $g(\mu) = 0$ unless $\mu < \la_0$.
Therefore $g$ is a linear combination of the $f_{\mu}$'s for $\mu < \la_0$ hence $g \in V$
and this implies $f_{\la_0} \in \varphi_{\la_0} + V \subset V$. By induction we conclude
that $c$ is surjective, and therefore is an isomorphism.

Now assume that 
$L$ is acted upon by $G$. Then $\kk M$ and $\kk^{L}$ are
both natural $\kk G$-modules : if $g \in G$, then $g.e_{\la} = e_{g. \la}$ and,
if $f : L \to \kk$, then $g.f : \la \mapsto f(g^{-1}.\la)$. For these actions, $c$ is an isomorphism
of $\kk G$-modules. Indeed, $g.\varphi_{\la}(\mu) = \varphi_{\la}(g^{-1}.\mu)$ is $1$ is $\la \leq g^{-1}.\mu$
and $0$ otherwise, while $\varphi_{g.\la}(\mu)$ is $1$ if $g.\la \leq \mu$ and $0$ otherwise. Since
the action of $G$ is order-preserving, the two conditions are equivalent and this proves the claim.
Therefore $c$ induces an isomorphism $\kk M \rtimes \kk G \simeq \kk^{L} \rtimes \kk G$.

When $G$ is a finite and $\kk$ is a field, we have $\kk M \rtimes \kk G \simeq \kk^{L} \rtimes \kk G
\simeq \bigoplus_{X \in E}  \left(\kk^X \rtimes \kk G \right)$
where $E$ is the set of orbits of the action of $G$ on $L$. Each $X$ is a finite, transitive
$G$-set, and therefore $\kk^X \rtimes \kk G \simeq Mat_X(\kk G_0)$,
where $Mat_X(R)$ denotes the $|X|\times |X|$ matrix ring over the ring $R$, and $G_0 < G$ is
the stabilizer of an element of $X$ (see e.g. \cite{CABANESMARIN}, proposition 3.4).
Therefore $\kk M \rtimes \kk G$ is isomorphic to a direct sum of matrix algebras over group algebras
of finite groups. It is thus semisimple if and only if all these group algebras are semisimple.
This is the case as soon as the characteristic of $\kk$ does not divide $|G|$.
Similarly, it is split semisimple if all these group algebras are split semisimple,
and this concludes the proof of the proposition.

\end{proof}

We use this proposition to prove
the following. 

\begin{theorem} \label{thm:semisimple} Let $W$ be a finite Coxeter group. The algebra $\mathcal{C}_W(1)$ is isomorphic to $\kk^{\mathcal{W}} \rtimes \kk W$.
Moreover, if $\kk$ is a field then the following holds.
\begin{enumerate}
\item If the characteristic of $\kk$ does not divide the order of $|W|$, then $\mathcal{C}_W(1)$ is semisimple. If $\kk$ has characteristic $0$, then the algebra $\mathcal{C}_W(\underline{u})$ is generically semisimple,
and $\mathcal{C}_W(\underline{u}) \simeq \kk^{\mathcal{W}} \rtimes \kk W$
for generic $\underline{u}$, up to a finite extension of $\kk$.
\item If moreover the group algebra $\kk N_{W}(W_0)$ of the normalizer of $W_0$ inside $W$
is split semisimple for every reflection subgroup
$W_0$ of $W$, then $\mathcal{C}_W(1)$ is split semisimple.
\end{enumerate}
\end{theorem}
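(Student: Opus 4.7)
The plan is to reduce the theorem to the structural proposition just proved about bands associated to join semilattices, by identifying $\mathcal{C}_W(1)$ as a semidirect product of exactly the required type, and then to deduce the $\underline{u}$-generic statement by a standard deformation argument.

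First I would analyze $\mathcal{C}_W(1)$. Setting $u_s = 1$ in defining relation (6) gives $g_s^2 = 1$, so combined with the braid relations (1) the map $s \mapsto g_s$ extends to a $\kk$-algebra morphism $\kk W \to \mathcal{C}_W(1)$; this morphism is injective because the images $g_w$ are part of the basis provided by Theorem \ref{thm:main}. On the other hand, let $A \subset \mathcal{C}_W(1)$ be the subalgebra generated by the $e_t$, $t \in \mathcal{R}$. By Theorem \ref{thm:main} the $e_{\bar J}$ for $J \in \mathcal{P}_f(\mathcal{R})$, equivalently the $e_{W_0}$ for $W_0 \in \mathcal{W}$, form a $\kk$-basis of $A$. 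Moreover defining relation (5), together with $g_s^2 = 1$, gives $g_s e_t g_s^{-1} = e_{sts}$, hence $w e_{W_0} w^{-1} = e_{w W_0 w^{-1}}$ for all $w \in W$. Thus $A$ is stable under conjugation by $W$, and counting basis elements via Theorem \ref{thm:main} one obtains the semidirect product decomposition $\mathcal{C}_W(1) \simeq A \rtimes \kk W$.

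Next I would apply the preceding proposition on band algebras. The poset $\mathcal{W}$ of (finitely generated) reflection subgroups of $W$ is a finite join semilattice under inclusion, with join $W_0 \vee W_1 = \langle W_0 \cup W_1 \rangle$. The multiplication in $A$ reads $e_{W_0} e_{W_1} = e_J e_K = e_{J \cup K} = e_{W_0 \vee W_1}$ where $\langle J\rangle = W_0$, $\langle K\rangle = W_1$, so $A \simeq \kk M$ for $M$ the band of $\mathcal{W}$. The conjugation action of $W$ on $\mathcal{W}$ is clearly order-preserving. The proposition therefore yields
$$
\mathcal{C}_W(1) \;\simeq\; \kk M \rtimes \kk W \;\simeq\; \kk^{\mathcal{W}} \rtimes \kk W.
$$

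For assertion (1), when $\mathrm{char}\,\kk \nmid |W|$, all stabilizers $N_W(W_0)$ are subgroups of $W$, so their group algebras are semisimple, and the last part of the preceding proposition gives semisimplicity of $\mathcal{C}_W(1)$. For the generic semisimplicity in characteristic $0$: by Theorem \ref{thm:main} the algebra $\mathcal{C}_W(\underline u)$ is a free module of finite rank $|W|\cdot|\mathcal{W}|$ over the Laurent polynomial ring in the $u_s$'s, and its specialization at $\underline u = 1$ is split semisimple (after a finite extension to split each $\kk N_W(W_0)$). A standard Tits deformation argument — the discriminant of the trace form is a polynomial in the $u_s$, nonzero at $\underline u = 1$, hence nonzero generically — then gives both generic semisimplicity and, over a finite extension of the fraction field, an isomorphism $\mathcal{C}_W(\underline u) \simeq \kk^{\mathcal{W}} \rtimes \kk W$ lifting the one at $\underline u=1$. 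For assertion (2) there is nothing more to do: the stabilizer of $W_0 \in \mathcal{W}$ under the $W$-action is precisely $N_W(W_0)$, so the hypothesis is exactly the split semisimplicity hypothesis of the band proposition, which then directly delivers split semisimplicity of $\mathcal{C}_W(1)$.

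The only genuinely delicate point is the deformation step in (1); the rest is essentially bookkeeping once the band proposition is in place. The main subtlety there is ensuring that the isomorphism $\mathcal{C}_W(1) \simeq \kk^{\mathcal{W}} \rtimes \kk W$ lifts to an isomorphism for generic $\underline u$, which requires invoking Tits's deformation theorem in a form that tracks the full algebra structure (not just the character table), but this is standard since both algebras are free of the same rank and the semisimple quotient at $\underline u=1$ is identified on the nose.
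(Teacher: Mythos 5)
Your proposal is correct and follows essentially the same route as the paper: identify $\mathcal{C}_W(1)$ as the semidirect product of $\kk W$ with the band algebra of the join semilattice $\mathcal{W}$ (inclusion order, conjugation action), apply the preceding proposition on M\"obius/band algebras, observe that the stabilizers are exactly the normalizers $N_W(W_0)$ for part (2), and invoke Tits' deformation theorem together with the freeness from Theorem \ref{thm:main} for the generic statement in (1). The extra details you supply (injectivity of $\kk W \to \mathcal{C}_W(1)$ via the basis, the explicit product rule $e_{W_0}e_{W_1}=e_{W_0\vee W_1}$) are exactly what the paper leaves implicit.
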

\begin{proof}
We apply the above proposition with $L$ the semilattice made of all the reflection subgroups $\mathcal{W}$,
with $\leq$ denoting the inclusion of reflection subgroups, and the action of $W$
is by conjugation. This proves one part of (1), and the
remaining part is a consequence of Tits' deformation theorem (see e.g. \cite{GECKPFEIFFER}, \S 7.4) and of the fact that
$\mathcal{C}_W(\underline{u})$ is a free module of finite rank over $\kk[\underline{u}]$, by theorem
\ref{thm:main}. Part (2) is the consequence of the proposition above together with the fact that
the stabilizers of the action of $W$ on $\mathcal{W}$ are exactly the normalizers of reflection
subgroups.

\end{proof}

In particular, for $W = \mathfrak{S}_n$, this has the following consequence.
\begin{corollary} \label{cor:SnSplit}  If $W = \mathfrak{S}_n$ and $\kk$ has characteristic
not dividing $n!$, then $\mathcal{C}_W(1)$ is split semisimple over $\kk$.
\end{corollary}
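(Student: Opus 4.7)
The plan is to apply part (2) of Theorem~\ref{thm:semisimple}, which reduces the claim to proving that for every reflection subgroup $W_0$ of $W = \mathfrak{S}_n$, the group algebra $\kk N_W(W_0)$ is split semisimple. Since the characteristic of $\kk$ does not divide $n!$, and in particular does not divide $|N_W(W_0)|$, semisimplicity is automatic by Maschke; the issue is splitness, i.e., that all absolutely irreducible representations of $N_W(W_0)$ are defined over $\kk$. Because the character values of these normalizers will turn out to be rational integers, it suffices to show that they are defined over the prime field of $\kk$, and hence over $\kk$.

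The first step is to identify reflection subgroups of $\mathfrak{S}_n$ with set partitions of $\{1,\dots,n\}$ (as already discussed in Section~\ref{sect:Bell}). A partition $\pi$ with blocks $B_1,\dots,B_k$ of sizes $n_1,\dots,n_k$ corresponds to the Young subgroup $W_0 = \mathfrak{S}_{B_1}\times\dots\times\mathfrak{S}_{B_k}$. The second step is to compute $N_W(W_0)$: it consists of the permutations preserving the set partition $\pi$, which are obtained by permuting inside each block and permuting blocks of the same size among themselves. If $m_i$ denotes the number of blocks of size $i$, this yields the familiar isomorphism
$$
N_{\mathfrak{S}_n}(W_0) \;\simeq\; \prod_{i \geq 1} \bigl(\mathfrak{S}_i \wr \mathfrak{S}_{m_i}\bigr).
$$

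The third step is to establish that the group algebra over $\kk$ of each such wreath product $\mathfrak{S}_i \wr \mathfrak{S}_{m_i}$ is split semisimple. This rests on the classical fact that $\mathbb{Q}\mathfrak{S}_n$ is split semisimple (all ordinary characters of $\mathfrak{S}_n$ are $\mathbb{Z}$-valued), combined with the standard inductive description of the representation theory of a wreath product $G \wr \mathfrak{S}_m$ (going back to Specht): the absolutely irreducible representations of $G \wr \mathfrak{S}_m$ are parametrized by tuples $(\lambda^{(\chi)})_{\chi \in \mathrm{Irr}(G)}$ of partitions with $\sum |\lambda^{(\chi)}| = m$, and each is constructed by inducing tensor products of pullbacks of the $\chi$'s with Specht-type modules for smaller symmetric groups. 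If $\mathbb{Q}G$ is split semisimple, then so is $\mathbb{Q}[G\wr \mathfrak{S}_m]$. Applying this with $G = \mathfrak{S}_i$ gives split semisimplicity of $\mathbb{Q}[\mathfrak{S}_i\wr\mathfrak{S}_{m_i}]$, and taking a product finishes the identification for $N_W(W_0)$. Since the resulting characters are integer-valued, they are defined over the prime subfield of $\kk$ and hence over $\kk$.

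The most delicate point is the third step: one needs the rationality/splitness statement for wreath products, not merely semisimplicity. I would cite this rather than re-derive it, since it is standard (e.g.\ Macdonald, \emph{Symmetric Functions and Hall Polynomials}, Appendix~B on $G\wr\mathfrak{S}_n$, or James-Kerber). Once it is in hand, combining it with Theorem~\ref{thm:semisimple}(2) and the classification of reflection subgroups of $\mathfrak{S}_n$ as Young subgroups yields Corollary~\ref{cor:SnSplit} immediately.
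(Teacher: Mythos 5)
Your proof is correct and follows essentially the same route as the paper: reduce via Theorem~\ref{thm:semisimple}(2) to the normalizers of reflection subgroups, identify these as direct products of wreath products $\mathfrak{S}_i \wr \mathfrak{S}_{m_i}$, and invoke the classical fact (James--Kerber, or Macdonald's Appendix B) that such wreath products have split semisimple group algebras whenever they are semisimple. One small caution: rationality of character values alone does not force splitness (Schur indices can obstruct it), so the operative point is the one you make afterwards, namely that the irreducibles of $G\wr\mathfrak{S}_m$ are explicitly constructed over the base field from those of $G$ and Specht modules -- which is exactly what the cited references provide.
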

\begin{proof} From the theorem above, we need to prove that, for every reflection
subgroup $W_0$ of $\mathfrak{S}_n$, its normalizer $N_0$ has a split semisimple
group algebra over $\kk$.
Recall that a reflection subgroup $W_0$ of $\mathfrak{S}_n$ naturally corresponds to a partition
$\mathcal{P}$ of $\{1,\dots,n\}$. The normalizer of $W_0$ is easily seen to be the
subgroup of $\mathfrak{S}_n$ stabilizing the partition, and is therefore a direct
product of wreath products of the form $\mathfrak{S}_m \wr \mathfrak{S}_d
= (\mathfrak{S}_m)^d \rtimes \mathfrak{S}_d$ for $md \leq n$. The group algebras
of these groups are split semisimple as soon as they are semisimple (see \cite{JAMESKERBER}, cor. 4.4.9). By Maschke's theorem this holds true as soon as the characteristic of $\kk$ does not
divides $n!$, and this proves the claim.
\end{proof}

We do not know the class of groups,  for which the above corollary holds (in characteristic $0$).
When $W$ is not a Weyl group, the field $\Q$ should of course be replaced by the field of definition
$K = \langle tr(w); w \in W \rangle$. Also, we might want to generalize this statement either
to
$\mathcal{C}_W(1)$ or, more cautiously, to $\mathcal{C}_W^p(1)$ or some $\mathcal{C}_W^R(1)$. The most naive (and vague) question
on Coxeter groups related to this is therefore the following one.
\begin{question}
For which finite Coxeter groups $W$ and which class of reflection subgroups $G$ of $W$ can we expect that the group algebra
$K N_W(G)$ of the normalizer is split semisimple ?
\end{question}

One may wonder whether this is actually true for an arbitrary \emph{reflection subgroup} and the class of \emph{all reflection subgroups}.
A simple and easy-to-visualize counterexample is given by the following construction. Consider
the normalizer
of a 2-Sylow subgroup $S \simeq (\Z/2\Z)^3$ of the symmetry group $W = H_3$ of the icosahedron.
It is a semi-direct product $S \rtimes C_3$, and $S$ is a reflection subgroup --
generated by the reflections around three orthogonal golden rectangles, see figure \ref{fig:icosaedre},
and the
element of order $3$ is
a rotation whose axis goes through the two opposite faces painted in blue.
Therefore this normalizer has (1-dimensional) representations that can
be realized only over $\Q(\zeta_3)$, while the group algebra of $W$ splits only over $\Q(\sqrt{5})$.

Relaxing the first assumption, the next natural question is whether this is actually true for a \emph{Weyl group} (that is, $K = \Q$)
 and again the class of \emph{all reflection subgroups}. A counter-example can be constructed
in type $E_7$, where there is a 2-reflection subgroup $W_0$ isomorphic to $\Z_2^7$,
whose normalizer $N_0$ has for quotient $N_0/W_0 \simeq \PSL_2(\F_7) \simeq \SL_3(\F_2)$. From the character table
of $\SL_3(\F_2)$ (that can be found e.g. in the ATLAS \cite{ATLAS}) one gets that it
admits
(for example, 3-dimensional) irreducible characters whose values generate $\Q(\sqrt{-7})$,
and therefore the irreducible characters $N_0$ are not all rationally-valued. Interestingly enough, the
reflection subgroups appearing as counter-examples here (for $H_3$ and $E_7$) both arise from the decomposition of $-1 \in W$
as a product of orthogonal reflections, established in \cite{SPRINGERINVOLUTIONS}. For the interested reader,
one can check that, in type $E_7$, we have  $N_0 = \SL_3(\F_2) \ltimes \F_2^7$, and the action of $\SL_3(\F_2)$ on $\F_2^7$
is the permutation representation over $\F_2$ associated to a transitive action of $\SL_3(\F_2)$
on 7 elements. Up to automorphism, there is only one transitive action of $\SL_3(\F_2)$,
and this is its natural action on the seven non-zero elements of $\F_2^3$. I thank R. Stancu for discussions
on this last topic.

The next natural question is whether, for all reflection groups, and the class of all parabolic subgroups,
the algebra $K N_W(G)$ is split semisimple, which would imply that $\mathcal{C}_W^p(1)$ is split semisimple for $\kk = K$.
This might be attacked through Howlett's general description of the normalizers of parabolic subgroups (see \cite{HOWLETT}).
Note that the constructions above in type $H_3$ and $E_7$ are not parabolic since they have the same rank as the whole group.

The above discussion on the normalizers motivates to our eyes that the most natural remaining questions on the splitting fields for our
algebras are the following ones.


\begin{question} Let $W$ be a finite Coxeter group.
\begin{enumerate}
\item Is $\mathcal{C}_W^p(1)$ split semisimple for $\kk = K$ ? At least when $K = \Q$ ?
\item Is there a natural minimal splitting field for $\mathcal{C}_W(1)$ ? Can one characterize it in terms of $W$ ? 
\end{enumerate}
\end{question}

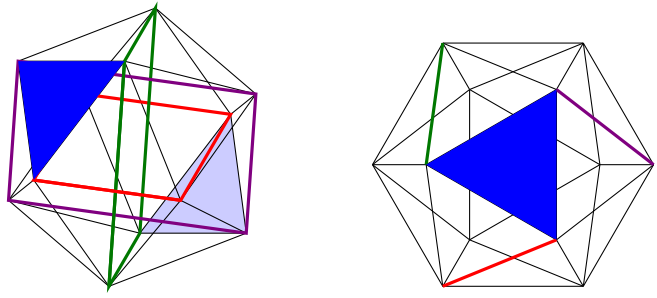
\begin{figure}
\begin{center}
\begin{tikzpicture}
\fill [color=blue!20] (1.3090169944,0.4370160246) -- (0.1019102132,-1.1441228059) -- (1.5161237758,-1.1441228059) -- cycle;
\draw (1.3090169944,0.4370160246) -- (-0.6441228058,0.7071067812);
\draw (1.3090169944,0.4370160246) -- (0.1019102132,-1.1441228059);
\draw (1.3090169944,0.4370160246) -- (0.3090169944,1.8512295871);
\draw (1.3090169944,0.4370160246) -- (1.5161237758,-1.1441228059);
\draw (1.3090169944,0.4370160246) -- (1.6441228059,0.7071067812);
\draw (-0.6441228058,0.7071067812) -- (0.1019102132,-1.1441228059);
\draw (-0.6441228058,0.7071067812) -- (0.3090169944,1.8512295871);
\draw (-0.6441228058,0.7071067812) -- (-1.6441228059,-0.7071067812);
\draw (-0.6441228058,0.7071067812) -- (-1.5161237758,1.1441228059);
\draw (0.1019102132,-1.1441228059) -- (-1.6441228059,-0.7071067812);
\draw (0.1019102132,-1.1441228059) -- (1.5161237758,-1.1441228059);
\draw (0.1019102132,-1.1441228059) -- (-0.3090169944,-1.8512295871);
\draw (0.3090169944,1.8512295871) -- (1.6441228059,0.7071067812);
\draw (0.3090169944,1.8512295871) -- (-1.5161237758,1.1441228059);
\draw (0.3090169944,1.8512295871) -- (-0.1019102132,1.1441228059);
\draw (-1.6441228059,-0.7071067812) -- (-0.3090169944,-1.8512295871);
\draw (-1.6441228059,-0.7071067812) -- (-1.5161237758,1.1441228059);
\draw (-1.6441228059,-0.7071067812) -- (-1.3090169944,-0.4370160246);
\draw (1.5161237758,-1.1441228059) -- (-0.3090169944,-1.8512295871);
\draw (1.5161237758,-1.1441228059) -- (1.6441228059,0.7071067812);
\draw (1.5161237758,-1.1441228059) -- (0.6441228058,-0.7071067812);
\draw (-0.3090169944,-1.8512295871) -- (-1.3090169944,-0.4370160246);
\draw (-0.3090169944,-1.8512295871) -- (0.6441228058,-0.7071067812);
\draw (1.6441228059,0.7071067812) -- (0.6441228058,-0.7071067812);
\draw (1.6441228059,0.7071067812) -- (-0.1019102132,1.1441228059);
\draw (-1.5161237758,1.1441228059) -- (-1.3090169944,-0.4370160246);
\draw (-1.5161237758,1.1441228059) -- (-0.1019102132,1.1441228059);
\draw (-1.3090169944,-0.4370160246) -- (0.6441228058,-0.7071067812);
\draw (-1.3090169944,-0.4370160246) -- (-0.1019102132,1.1441228059);
\draw (0.6441228058,-0.7071067812) -- (-0.1019102132,1.1441228059);
\draw [very thick] [color=red] (1.3090169944,0.4370160246) -- (0.6441228058,-0.7071067812) -- (-1.3090169944,-0.4370160246) -- (-0.6441228058,0.7071067812) -- cycle;
\draw [very thick] [color={rgb,255:red, 0;green, 120;blue, 0}] (0.1019102132,-1.1441228059) -- (0.3090169944,1.8512295871) -- (-0.1019102132,1.1441228059) -- (-0.3090169944,-1.8512295871) -- cycle;
\draw [very thick] [color=violet] (-1.6441228059,-0.7071067812) -- (1.5161237758,-1.1441228059) -- (1.6441228059,0.7071067812) -- (-1.5161237758,1.1441228059) -- cycle;
\draw [very thick] [color={rgb,255:red, 0;green, 120;blue, 0}] (-0.1019102132,1.1441228059)--(-0.3090169944,-1.8512295871);
\draw [very thick] [color=red] (-1.3090169944,-0.4370160246)--(0.6441228058,-0.7071067812);
\fill [color=blue] (-1.5161237758,1.1441228059) -- (-1.3090169944,-0.4370160246) -- (-0.1019102132,1.1441228059) -- cycle;
\end{tikzpicture} \ \ \ \ \ \ \ \ \ \ \ \ 
\begin{tikzpicture}
\draw (-0.5773502691,1) -- (0.934172359,1.6180339888);
\draw (-0.5773502691,1) -- (1.1547005383,0);
\draw (-0.5773502691,1) -- (-0.934172359,1.6180339888);
\draw (-0.5773502691,1) -- (-0.5773502691,-1);
\draw (-0.5773502691,1) -- (-1.868344718,0);
\draw (0.934172359,1.6180339888) -- (1.1547005383,0);
\draw (0.934172359,1.6180339888) -- (-0.934172359,1.6180339888);
\draw (0.934172359,1.6180339888) -- (1.868344718,0);
\draw (0.934172359,1.6180339888) -- (0.5773502691,1);
\draw (1.1547005383,0) -- (1.868344718,0);
\draw (1.1547005383,0) -- (-0.5773502691,-1);
\draw (1.1547005383,0) -- (0.934172359,-1.6180339888);
\draw (-0.934172359,1.6180339888) -- (-1.868344718,0);
\draw (-0.934172359,1.6180339888) -- (0.5773502691,1);
\draw (-0.934172359,1.6180339888) -- (-1.1547005383,0);
\draw (1.868344718,0) -- (0.934172359,-1.6180339888);
\draw (1.868344718,0) -- (0.5773502691,1);
\draw (1.868344718,0) -- (0.5773502691,-1);
\draw (-0.5773502691,-1) -- (0.934172359,-1.6180339888);
\draw (-0.5773502691,-1) -- (-1.868344718,0);
\draw (-0.5773502691,-1) -- (-0.934172359,-1.6180339888);
\draw (0.934172359,-1.6180339888) -- (0.5773502691,-1);
\draw (0.934172359,-1.6180339888) -- (-0.934172359,-1.6180339888);
\draw (-1.868344718,0) -- (-0.934172359,-1.6180339888);
\draw (-1.868344718,0) -- (-1.1547005383,0);
\draw (0.5773502691,1) -- (0.5773502691,-1);
\draw (0.5773502691,1) -- (-1.1547005383,0);
\draw (0.5773502691,-1) -- (-0.934172359,-1.6180339888);
\draw (0.5773502691,-1) -- (-1.1547005383,0);
\draw (-0.934172359,-1.6180339888) -- (-1.1547005383,0);
\draw [very thick] [color=red] (-0.934172359,-1.6180339888) -- (0.5773502691,-1);
\draw [very thick] [color=violet] (1.868344718,0) -- (0.5773502691,1);
\draw [very thick] [color={rgb,255:red, 0;green, 120;blue, 0}] (-0.934172359,1.6180339888) -- (-1.1547005383,0);
\fill [color=blue] (0.5773502691,1) -- (0.5773502691,-1) -- (-1.1547005383,0)-- cycle;
\end{tikzpicture}
\end{center}
\caption{Normalizer of a maximal reflection 2-group in type $H_3$.}
\label{fig:icosaedre}
\end{figure}

\section{Braid image}
\label{sect:braidimage}

In this section we study the image of the (generalized) braid group $B$
inside the algebra $\mathcal{C}_W(\underline{u})$. We let $B^+$
denote the positive braid monoid (or Artin monoid) associated to $W$.

\subsection{Braid morphisms}
\label{sect:braidmorphisms}

\begin{proposition} \label{prop:braidmorphisms} For every collection $(\la_s)_{s \in S} \in (\kk \setminus \{ 0 \})^S$ such that $s \sim t \Rightarrow \la_s = \la_t$,
there exists a morphism $ B^+ \to \mathcal{C}_W(\underline{u})$
defined by $s \mapsto g_s + \la_s g_ se_s$ for $s \in S$. When $\kk$ is a field, it can be extended to a morphism $\Phi_{\la} : \kk B \to \mathcal{C}_W(\underline{u})$ if and only if $\forall s \in S \ \la_s \neq -1 $.
\end{proposition}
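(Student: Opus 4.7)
Since relation (5) with $t = s$ gives $g_s e_s = e_{sss} g_s = e_s g_s$, the proposed image $\sigma_s := g_s + \lambda_s g_s e_s$ factors as $\sigma_s = g_s(1 + \lambda_s e_s) = (1 + \lambda_s e_s) g_s$. The defining relations of $B^+$ are just the braid relations, so the first assertion reduces to checking $\underbrace{\sigma_s \sigma_t \cdots}_{m_{st}} = \underbrace{\sigma_t \sigma_s \cdots}_{m_{st}}$ for each pair $s, t \in S$ with $m_{st} < \infty$.

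To verify this, I would expand both sides into a normal form in which every $g$-factor stands to the right of every $e$-factor. Writing $\sigma_{s_i} = (1 + \lambda_{s_i} e_{s_i}) g_{s_i}$ and repeatedly applying relation (5) in the form $g_u e_v = e_{uvu} g_u$, the product should simplify to
\[
\prod_{i=1}^{m_{st}} \sigma_{s_i} = \left(\prod_{i=1}^{m_{st}} (1 + \lambda_{s_i} e_{\beta_i})\right) g_{s_1} g_{s_2} \cdots g_{s_{m_{st}}},
\]
where $\beta_i = (s_1 s_2 \cdots s_{i-1}) s_i (s_{i-1} \cdots s_2 s_1) \in \mathcal{R}$. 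The idempotent factors commute pairwise by relations (2)--(4), so the first product is order-independent. The second product equals $g_{w_0}$, where $w_0$ is the longest element of the dihedral subgroup $\langle s, t \rangle$, and coincides on both sides by the braid relation for the $g$'s.

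It then remains to match the commuting products of factors $(1 + \lambda_{s_i} e_{\beta_i})$ on the two sides. In both cases, the multiset $\{\beta_1, \ldots, \beta_{m_{st}}\}$ equals the full set of reflections of the dihedral group $\langle s, t \rangle$, so the issue is tracking which label $\lambda_{s_i}$ is attached to which reflection. When $m_{st}$ is odd, all reflections of $\langle s, t \rangle$ are conjugate, hence conjugate in $W$, so $\lambda_s = \lambda_t$ by hypothesis and there is nothing to check. When $m_{st}$ is even, I expect the indices $i$ with $s_i = s$ to produce the reflections in the $W$-class of $s$, and those with $s_i = t$ the reflections in the class of $t$, consistently on both sides.

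For the extension to $\kk B$, each $\sigma_s$ must be invertible. Since $g_s$ is invertible by relation (6), this reduces to invertibility of $1 + \lambda_s e_s$; using $e_s^2 = e_s$, the candidate inverse $1 - \frac{\lambda_s}{1 + \lambda_s} e_s$ exists precisely when $1 + \lambda_s$ is invertible in $\kk$, i.e., over a field, when $\lambda_s \neq -1$. Conversely, if $\lambda_s = -1$, then $1 - e_s$ is a nontrivial idempotent (since $e_s \neq 0$ by theorem \ref{thm:main}), hence not invertible in any ring. The main obstacle, I expect, is the bookkeeping in the third paragraph: verifying for even $m_{st}$ that the two parities of indices really do match up with the two $W$-classes of reflections of $\langle s, t \rangle$ in the same way on both sides of the braid relation.
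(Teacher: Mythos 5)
Your argument is correct and is essentially the paper's proof: expand each factor $g_{s_i}(1+\lambda_{s_i}e_{s_i})$, push the idempotents to one side so that both sides of the braid relation become the same commuting product of $(1+\lambda_r e_r)$ over the full set of reflections $r$ of the dihedral group $\langle s,t\rangle$, times $\underbrace{g_sg_tg_s\cdots}_{m_{st}}$. The bookkeeping you flag in the even case is automatic: $\beta_i=(s_1\cdots s_{i-1})s_i(s_1\cdots s_{i-1})^{-1}$ is by construction $W$-conjugate to $s_i$, so the label $\lambda_{s_i}$ attached to $e_{\beta_i}$ depends only on the $W$-conjugacy class of the reflection $\beta_i$, and the two commuting products therefore coincide factor by factor.
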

\begin{proof}
Let $s,t \in S$, and $m_{st}$ denote the order of $st \in W$.
We have
$$
\begin{array}{cl}
&g_s(1+\la_se_s)g_t(1+\la_te_t)g_s(1+\la_se_s) \dots \\ 
=& \sum g_s (\la_s e_s)^{\eps_1} g_t (\la_t e_t)^{\eps_2} g_s (\la_s e_s)^{\eps_3} \dots \\ 
= &\sum (\la_s e_s)^{\eps_1} (\la_{sts} e_{sts})^{\eps_2} g_s  g_t  g_s (\la_s e_s)^{\eps_3} \dots \\ 
= &( \sum  \underbrace{(\la_s e_s)^{\eps_1} (\la_{sts} e_{sts})^{\eps_2} (\la_{ststs} e_{ststs})^{\eps_3} \dots}_{m_{st}}) \underbrace{g_s  g_t  g_s \dots}_{m_{st}}
\end{array}
$$
where the sums are over all the $(\eps_1,\dots,\eps_{m_{st}}) \in \{0,1 \}^{m_{st}}$. By the braid relations inside $\mathcal{C}_W(\underline{u})$ we have $\underbrace{g_s  g_t  g_s \dots}_{m_{st}} = \underbrace{g_t  g_s  g_t \dots}_{m_{st}}$. Finally,
inside the dihedral group $\langle s,t \rangle$, the set of cardinality $m_{st}$ given by
$\{ s, sts, ststs,stststs,\dots \}$ is exactly the union of all the reflections (this is for instance a consequence of the fact
that $\ell(\underbrace{sts\dots}_{m_{st}}) = m_{st}$, see e.g. \cite{BOURB456}  ch. 4 \S 1 no. 4, Lemme 2). From this we get 
$$
\underbrace{g_s(1+\la_se_s)g_t(1+\la_te_t)g_s(1+\la_se_s) \dots }_{m_{st}}
=\underbrace{g_t(1+\la_te_t)g_s(1+\la_se_s)g_t(1+\la_te_t) \dots }_{m_{st}}$$
and this proves the first part. In order to extend this morphism to $B$ it is necessary and sufficient to have $g_s(1+\la_s e_s)$
invertible for all $s \in S$. Since $g_s$ is invertible, this means $(1+ \la_s e_s)$ invertible. Since $e_s^2 = e_s$ and $e_s \not\in \{0, 1\}$
this means $\la_s +1 \neq 0$.
Indeed, we have $(1+\la_s e_s)(1 + \la_s e_s-\la_s -2) = - (\la_s +1)$ hence $(1+\la_s e_s)$
is invertible as soon as $\la_s +1\neq 0$, and conversely $1-e_s$ is not invertible since
$(1-e_s)e_s = 0$.
\end{proof}

\subsection{Description in type $A_1$, and beyond for generic $\la$}

If $W$ has type $A_1$, the algebra $\mathcal{C}_W(\underline{u})$ can be described
by two generators $g,e$ and relations $e^2=e$, $ge = eg$, $g^2 = 1 + (u-1)e(1+g)$.
We know that it is a free module with basis $1,e,g,eg$.
We let $a_0 = (1+g)(1-e)$, $a_1 = e(1+g)$, $a_2 = (g-1)(1-e)$, $a_3 = (g-u)e$.
If $2(u+1)$ is invertible in $\kk$, then $a_0,a_1,a_2,a_3$ is again a basis over $\kk$.
It is made of eigenvectors for $g$ and $e$. The eigenvalues are
$$
\begin{array}{c||c|c|c|c|}
 & a_0 & a_1 & a_2 & a_3 \\
\hline
\hline
e & 0 & 1 & 0 &1 \\
\hline
g & 1 & u & -1 & -1 \\
\hline
eg & 0 & u & 0 & -1
\end{array}
$$
It follows that $g + \la ge$ has eigenvalues 
$1,u(1+\la),-1,-1-\la$. The discriminant of its
characteristic polynomial $(X-1-\la)(X-u(1+\la))(X+1)(X+1+\la)$
is
$$
Q(\la,u) = 4(\la+2)^2( \la u + u-1)^2 (1+u)^2(1+\la)^2(\la u + 1+ u)^2 \la^2.
$$
When this discriminant vanishes, and over a domain, $g+\la ge$ satisfies a cubic relation, because 2 of the 4 eigenvalues are equal. When it is invertible, $g+\la ge$ generates the whole
algebra. As a consequence,
we get for an arbitrary Coxeter group $W$ the following.
\begin{proposition} If the $\la_s,u_s$ as in the
previous proposition are such that $Q(\la_s,u_s)$
is invertible for all $s \in S$, then $\mathcal{C}_W(\underline{u})$
is generated by the $g_s,s \in S$.
\end{proposition}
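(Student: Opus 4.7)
My plan is to reduce the statement to the rank-one computation just performed. Set $h_s := g_s + \la_s g_s e_s = \Phi_{\underline{\la}}(s)$ for $s \in S$; these are the braid images whose generating power is at stake, so the proposition amounts to showing that they already generate $\mathcal{C}_W(\underline{u})$. Since the proposition of section \ref{sect:genconstruction} asserts that the $g_s, e_s$ together generate $\mathcal{C}_W(\underline{u})$, it is enough to show that for each $s \in S$ both $g_s$ and $e_s$ lie in the polynomial subalgebra $\kk[h_s]$.

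Fix $s \in S$ and let $A_s \subset \mathcal{C}_W(\underline{u})$ denote the $\kk$-subalgebra generated by $g_s$ and $e_s$. By Theorem \ref{thm:main} applied to the rank-one parabolic $\langle s \rangle$ (or directly from the defining relations), $A_s$ is a free $\kk$-module of rank $4$ on $\{1, e_s, g_s, g_s e_s\}$, and it is commutative by relation (5). The hypothesis that $Q(\la_s, u_s)$ is invertible forces $2(u_s+1) \in \kk^\times$, since $(2(u_s+1))^2 = 4(u_s+1)^2$ appears as an explicit factor of $Q$; hence the eigenvectors $a_0, a_1, a_2, a_3$ constructed in the type $A_1$ discussion are a genuine $\kk$-basis of $A_s$, and $h_s$ acts diagonally in this basis with entries $1,\ u_s(1+\la_s),\ -1,\ -1-\la_s$.

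The invertibility of $Q(\la_s, u_s)$ is, by its very construction, precisely the statement that every one of the six pairwise differences of these four entries is a unit in $\kk$. The associated $4 \times 4$ Vandermonde determinant is then itself a unit, so $\{1, h_s, h_s^2, h_s^3\}$ is a second $\kk$-basis of $A_s$. In particular both $g_s$ and $e_s$ belong to $\kk[h_s]$, and running this argument for every $s \in S$ concludes the proof.

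I do not foresee any substantial obstacle: the single nonroutine input, namely the discriminant $Q$, is already computed in the paragraph preceding the statement. The only two minor points that deserve explicit verification are the four-dimensionality of $A_s$ (so that the eigenbasis $a_i$ really is a basis and not just a linearly independent set) and the propagation of invertibility from the product $Q$ to each of its factors, the latter holding in any commutative ring with unit. The reason invertibility (rather than mere non-vanishing) of $Q$ is imposed is precisely to guarantee that the Vandermonde passage works over $\kk$ itself, not only after inverting further elements.
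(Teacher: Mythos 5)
Your proof is correct and is exactly the argument the paper intends (the paper itself offers no proof beyond the sentence ``when it is invertible, $g+\la ge$ generates the whole algebra''): you reduce to the rank-one subalgebra $A_s$ and use that the four eigenvalues of $h_s$ on $a_0,\dots,a_3$ have pairwise differences which are units whenever $Q(\la_s,u_s)$ is. One small point to make explicit for a general commutative ring $\kk$: the transition matrix from $(a_0,a_1,a_2,a_3)$ to $(1,h_s,h_s^2,h_s^3)$ is not the bare Vandermonde matrix but the Vandermonde matrix composed with the diagonal matrix of the coefficients of $1=\frac{1}{2}(a_0-a_2)+\frac{1}{1+u_s}(a_1-a_3)$ in the eigenbasis; these coefficients are units precisely because $2(1+u_s)$ is, so the conclusion stands, but over a ring the mere linear independence of $1,h_s,h_s^2,h_s^3$ (which is all the Vandermonde determinant gives directly) would not by itself show they form a basis of $A_s$.
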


\subsection{Description in type $A_2$ and beyond for $\la = 0$}

\subsubsection{The cubic Hecke algebra}

For $a,b,c \in \kk$, the $\kk$-algebra $H_3(a,b,c)$ presented
by generators $s,t$ and relations $sts=tst$,
$(s-a)(s-b)(s-c) = (t-a)(t-b)(t-c)=0$ is known
to be a free deformation of the group algebra of the group $\Gamma_3=Q_8 \rtimes (\Z/3\Z)$,
where $Q_8$ is the quaternion group of order $8$ (see \cite{HECKECUBIQUE}). Moreover it is known
to be a symmetric algebra, with explicitely determined Schur elements. 
Specializing $a,b,c$ to $1,-1,u$ we get from \cite{TRBMW} that,
when $\Delta(u) = 6 u(1-u)(u+1)(u^2-u+1)(u^2+u+1)$ is invertible in $\kk$,
then $H_3(1,-1,u)$ is a semisimple algebra, isomorphic to $\kk \Gamma_3$,
possibly after some extension of scalars. We know that $H_3(1,-1,u)$
is a free module of rank $|\Gamma_3| = 24$ and that $\mathcal{C}_{A_2}(\underline{u})$
as rank $30$. Over the field $\kk = \Q(u)$, the image of the natural map 
$H_3(1,-1,u) \to \mathcal{C}_{A_2}(\underline{u})$ can be easily computed,
starting from a basis of $H_3(1,-1,u)$. We get
a vector space of dimension 20.
Therefore, this image is the quotient of $H_3(1,-1,u)$ by one of its three 2-sided
ideals corresponding to its simple modules of dimension 2. This quotient also
appears in the study of the Links-Gould invariant, see \cite{LINKSGOULD}.
This incites to look at skein relation of braid type satisfied by the Links-Gould invariant
on 3 strands. Ishii has established (\cite{ISHII} and also private communication, 2012) that, besides a cubic relation of the form
$(\sigma_i-t_0)(\sigma_i-t_1)(\sigma_i+1)=0$,
the Links-Gould invariant vanishes on the following relation 

$$
\begin{array}{cl}
&s_1 s_2 s_1^{-1} + s_1^{-1} s_2^{-1} s_1 + s_1 s_2 + s_1^{-1} s_2^{-1} + s_2 s_1^{-1} 
+ s_2^{-1} s_1 \\
= &s_1 s_2^{-1} s_1^{-1} + s_1^{-1} s_2 s_1 + s_1 s_2^{-1} + s_1^{-1} s_2 + s_2^{-1} s_1^{-1} + s_2 s_1
\end{array}$$
From explicit calculations inside $H_3(1,t_0,t_1)$ one checks that
this relation is non-trivial in this algebra. Therefore it is a generator of the
simple ideal defining the Links-Gould quotient $LG_3$ in the notations of \cite{LINKSGOULD}.

Another relation communicated by Ishii is the following one.
$$
\begin{array}{cl}
& s_1 s_2^{-1} s_1 - s_2 s_1^{-1} s_2 + t_0 t_1 s_2^{-1} s_1 s_2^{-1} - t_0 t_1
s_1^{-1} s_2 s_1^{-1} \\
=&-(t_0-1)(t_1-1) \left(
s_2^{-1} s_1 - s_1^{-1} s_2 + s_1 s_2^{-1} - s_2 s_1^{-1} + s_1 - s_2 + s_2^{-1} - s_1^{-1}
 \right)
\end{array}
$$
One checks similarly that it is nontrivial in $H_3(1,t_0,t_1)$.
By explicit computations inside $\mathcal{C}_{A_2}(u)$, one checks that both relations
are valid there. For the second one one neeeds to specialize at $\{t_0,t_1 \} = \{1,u \}$.
This proves
\begin{proposition} The two relations above are satisfied inside $\mathcal{C}_{A_k}(u)$
(and therefore inside $Y_{d,k+1}(u)$), for
all $k \geq 2$. Moreover, if $\kk$ is a field and $\Delta(u) \neq 0$,
then the image of $\kk B_3$ inside 
the algebra $\mathcal{C}_{A_2}(u)$ is semisimple, has dimension 20, and can be presented by generators $s_1,s_2$, and the braid relations together with the cubic relation $(s_1 -1)(s_1+1)(s_1-u) = 0$ and \emph{one} of the two relations above.

\end{proposition}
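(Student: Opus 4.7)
The plan is to leverage that the Ishii-type identities lie in the three-strand braid subgroup (so they may be checked entirely inside $\mathcal{C}_{A_2}(u)$) and then to describe the braid image via the cubic Hecke algebra $H_3(1,-1,u)$. First, I would verify both identities directly in $\mathcal{C}_{A_2}(u)$. By Theorem \ref{thm:main}, this algebra is a free $\kk$-module of rank $|W|\cdot|\mathcal{W}|=6\cdot 5 = 30$ with the explicit basis $e_{\overline{J}}g_w$, and the rewriting rules extracted in the proof of Theorem \ref{thm:main} reduce any braid word to a $\kk$-linear combination of basis elements; checking that both sides of each relation give the same expansion is then a finite mechanical computation.

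Next, to propagate the relations to $\mathcal{C}_{A_k}(u)$ for $k \geq 2$, I would observe that sending $g_{s_i} \mapsto g_{s_i}$ for $i = 1,2$ and $e_t \mapsto e_t$ for the three reflections of the standard $A_2$-subsystem extends to a well-defined algebra morphism $\mathcal{C}_{A_2}(u) \to \mathcal{C}_{A_k}(u)$; this is just a routine verification that each of the six defining relations of section \ref{sect:genconstruction} is preserved. Since each Ishii relation vanishes in the source, it vanishes in the target, and the statement for $Y_{d,k+1}(u)$ then follows from the natural morphism $\mathcal{C}_{A_k}(u) \to Y_{d,k+1}(u)$ already recorded in the introduction.

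For the structural part, note that the cubic relation $(g_s-1)(g_s+1)(g_s-u)=0$ is a formal consequence of relation (6) of $\mathcal{C}_W(\underline{u})$ (one multiplies $(g_s-1)(g_s+1) = (u-1)e_s(1+g_s)$ on the right by $g_s-u$ and uses $e_s(1-e_s)=0$), so the map $\kk B_3 \to \mathcal{C}_{A_2}(u)$ factors through $H_3(1,-1,u)$ with the same image $I$. Under $\Delta(u) \neq 0$, $H_3(1,-1,u)$ is semisimple of dimension $24$, so $I$ is semisimple as a quotient of a semisimple algebra. From the dimension count recalled just before the proposition we have $\dim I = 20$; hence the kernel $K$ of $H_3(1,-1,u) \onto I$ is a $4$-dimensional two-sided ideal. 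Since $4 = 2^2$ and among the Wedderburn components of $H_3(1,-1,u)$ only the matrix blocks $\Mat_2(\kk)$ attached to the three $2$-dimensional simple modules have dimension $4$, $K$ must be one such simple ideal.

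To obtain the presentation, I would finally show that each Ishii relation, viewed as an element $r \in H_3(1,-1,u)$, generates exactly the ideal $K$: nontriviality of $r$ in $H_3(1,-1,u)$ was recorded before the proposition, while $r \in K$ follows from the first part of the proposition just proved. Then $\langle r \rangle$ is a nonzero two-sided subideal of the simple ideal $K$, forcing $\langle r \rangle = K$ and $H_3(1,-1,u)/\langle r \rangle \cong I$; this is the stated presentation. The main and essentially only obstacle is the initial direct computation in $\mathcal{C}_{A_2}(u)$: the verification is purely mechanical inside a rank-$30$ free module, but neither identity admits an obviously conceptual proof, so one has to commit to the normal form of Theorem \ref{thm:main} and grind. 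Everything after that — the propagation to $\mathcal{C}_{A_k}(u)$ and $Y_{d,k+1}(u)$, the semisimplicity, the dimension, and the identification of $K$ — is formal.
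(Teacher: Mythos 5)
Your proposal is correct and follows essentially the same route as the paper: a direct (mechanical) verification of both Ishii relations in the rank-$30$ free module $\mathcal{C}_{A_2}(u)$, propagation to $\mathcal{C}_{A_k}(u)$ and $Y_{d,k+1}(u)$ via the obvious morphisms, and then the identification of the $4$-dimensional kernel of $H_3(1,-1,u)\onto I$ with a single $2\times 2$ matrix block generated by either (nontrivial) relation. The only difference is that you spell out the formal deduction of the cubic relation from relation (6) and the simple-ideal argument in more detail than the paper, which simply records the computations and the dimension count $24-20=4$.
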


The study of the algebra for a higher number of strands cannot be continued using
the same methods as in \cite{LINKSGOULD}, because the cubic quotient $H_4(1,-1,u)$,
though still being finite dimensional, is conjecturally not semisimple. Indeed,
the Schur elements of a conjectural symmetric trace for $H_4(a,b,c)$ -- as defined and described e.g. in \cite{CHLOUBOOK} --
were computed and included in the development version of the CHEVIE package for GAP3 (see \cite{MICHELCHEVIE}),
and some of them vanish when $(a+b)(a+c)(b+c)=0$.

We computed the dimension of the algebra generated by the braid generators inside $\mathcal{C}_{A_k}(u)$, $k \in \{3,4\}$
for a few rational values of $u$ (including, for $k=4$, $u \in \{ 17,127,217 \}$). We obtained $217$ for $k=3$ and $3364$ for $k = 4$.
This sequence $3,20,217,3364$ of dimensions does not appear for now in Sloane's encyclopaedia of integer sequences, so we could not extrapolate a general formula
from this.

\subsection{Positive representation of the braid monoid for $\la = -1$}
\label{monoidrepresentation}
When $\la = -1$, the images of the Artin generators still satisfy the braid relations,
but they are not invertible anymore. Therefore, they define a representation
of the positive braid monoid, or Artin monoid, that we denote $B^+$.
We denote $b_s = g_s - g_s e_s$
the action of $s \in S$. We have $b_s^3 = b_s$, and a straightforward computation shows that, for all $J \in\mathcal{P}_f(\mathcal{W})$
and $w \in W$, we have
$$
b_s.v_{J,w} = v_{sJs,sw} - v_{sJs \cup \{s \},sw}.
$$
It is remarkable that this action does not depend on the parameters $u_s$ anymore. Moreover,
when $W$ is finite, we can convert it to a linear action with \emph{positive} coefficients, as follows. Composing
through the natural projection $\mathcal{C}_W(\underline{u}) \to \mathcal{C}^{\mathcal{F}_{parab}}_W(\underline{u})$ we get a linear action on a vector space with basis the $x_{W_0,w}$
with $W_0$ a parabolic subgroup of $W$ and $w \in W$. Letting $[J]$ denote the
parabolic closure of $\langle J \rangle$, and $x_{J,w} = x_{[J],w}$
we get that $b_s. x_{[J],w} = x_{[sJs],sw} - x_{[sJs \cup \{ s \}],sw}$
(that is the fixer of the fixed point subspace of $\langle J \rangle$).
The rank of $[J]$ is equal to the codimension of the fixed point space of $\langle J \rangle$.

Note that $b_s.x_{[J],w} = 0$ iff $s \in [sJs]$ iff $s \in [J]$ iff $\rk([J \cup \{s \}]) = \rk([J])$.
Otherwise, $\rk([J \cup \{s \}]) = \rk([J])+1$. Because of this, letting $y_{[J],w} = (-1)^{\rk([J])} x_{[J],w}$
we get the formula
{}
$$
\begin{array}{lcll}
b_s.y_{[J],w} &=& y_{[sJs],sw} + y_{[sJs \cup \{ s \}],sw} \mbox{\ if \ } s \in [J] \\
b_s.y_{[J],w}&= & 0 \mathrm{\ otherwise.} \\
\end{array}
$$
In particular, if $g \in B^+$ is divisible by $s \in S$, then $g.y_{[J],w} = 0$
for all $s \in [J]$. Therefore, one could hope that this representation $g \mapsto b_g$
of $B^+$ is initially injective in the sense given by H\'ee in his analysis of Krammer's
faithfulness criterium (see \cite{HEE}), meaning that
$b_g$ determines the leftmost (or rightmost) simple factor of $g$. This would imply that
the representation $s \mapsto g_s + \la g_s e_s$ of $B$ is faithful, for generic $\la$.
However, this is not the case : in type $A_2$, with generators $s,t$,
a straightforward computation shows that
$b_s^2b_t^3b_s^2 = b_s b_t b_s b_t b_s$ while $s^2t^3s^2$ is divisible
by $s$ and not by $t$ (on both sides), while $ststs = tstts=sttst$
is divisible by $s$ and $t$ on both sides.

Finally, we remark that this representation with positive coefficients cannot be readily transposed to infinite
Coxeter groups. Indeed, although the intersection of all parabolic subgroups containing
a \emph{finitely generated} reflection subgroup of $W$ \emph{is} a parabolic subgroup, and therefore the notion
of parabolic closure remains well-defined, the relation $\rk([J \cup \{r \}]) = \rk([J])+1$ whenever $r \not\in [J]$ fails. The following easy example was communicated to me by T. Gobet. Let $(W,S)$ be an affine Coxeter group of type $\tilde{A}_2$, and $S = \{s,t,u \}$. Let $J = I = \{ s \}$ and $r = tut = utu$. Then $\langle s, t \rangle$
is an infinite dihedral group, whose parabolic closure is $W$, because every proper parabolic subgroup
of $W$ is finite. Therefore $\rk [J \cup \{ r \}] = 2+\rk [J]$ in this case.

\section{Generalization to complex reflection groups}

Let $W < \GL(V)$ be a finite complex reflection group, $\mathcal{R}$ its set of pseudo-reflections,
$\mathcal{W}_{parab}$ the collection of its parabolic subgroups, defined as the fixers of some linear subspace of $V$. We let $\mathcal{A} = \{\Ker(s-1) ,s \in \mathcal{R} \}$ denote the
associated hyperplane arrangement, $X = V \setminus \bigcup \mathcal{A}$ the hyperplane
complement and $B = \pi_1(X/W)$ its braid group. Without loss of generality we may assume that
$\mathcal{A}$ is essential, meaning $\bigcap \mathcal{A} = \{ 0 \}$.
We let $\mathcal{L}$
denote the lattice of the arrangement, formed by the intersections of reflecting hyperplanes.
There is a 1-1 correspondence $\mathcal{L} \to \mathcal{W}$ given by $L \mapsto W_L$
where $W_L = \{ w \in W ; w_{|L} = \Id_{|L} \}$. This bijection is an isomorphism of lattices,
and it is equivariant under the natural actions of $W$.

\subsection{Generalization of $\mathcal{C}_W^p(1)$, and a monodromy representation}
For $\kk$ an arbitrary unital commutative ring,
we let $\kk \mathcal{W}_{parab} = \kk \mathcal{L}$ denote the commutative algebra spanned by a basis of idempotents $e_G, G \in \mathcal{W}$ with relations $e_{G_1} e_{G_2} = e_{[ G_1, G_2 ]}$,
where $[A]$ denotes the parabolic closure of $A$, that is the fixer of the fixed point set of $A \subset W$. Equivalently, it is spanned by idempotents $e_L, L \in \mathcal{L}$ with relations $e_{L_1}e_{L_2}
= e_{L_1 \vee L_2}$, where $e_L = e_{W_L}$. 
In particular, $e_s = e_{Ker(s-1)}$ for all $s \in \mathcal{R}$.
This algebra is naturally
acted upon by $W$, through $w.e_G = e_{wGw^{-1}}$, or equivalently $w.e_L = e_{w(L)}$. We define $\mathcal{C}^p_W(1)$
as the
semidirect product $W \ltimes \kk \mathcal{W}_{parab} \simeq W \ltimes \kk \mathcal{L}$.
It is again acted upon by $W$ through
$w.(w_1.e_G) = (ww_1w^{-1})e_{wGw^{-1}}$. 
Applying proposition 3.7, we have the following analogue of theorem 3.8.
\begin{proposition} \label{prop:CWcrgSS} Let $W$ be a finite complex reflection group, and $\kk$ be a field. The algebra $\mathcal{C}^p_W(1)$
is isomorphic to $\kk^{\mathcal{L}} \rtimes \kk W$. Moreover, if $char.\,\kk$ does not divide $|W|$,
then $\mathcal{C}^p_W(1)$ is semisimple. It is split semisimple as soon as the group algebra $\kk N_W(W_0)$ is split semisimple for all $W_0 \in \mathcal{W}_{parab}$, where $N_W(W_0)$
denotes the normalizer of $W_0$ inside $W$. 
\end{proposition}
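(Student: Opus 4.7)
The plan is to deduce this proposition directly from Proposition 3.7 by identifying the relevant data. The semilattice here is $\mathcal{L}$, the intersection lattice of the hyperplane arrangement $\mathcal{A}$, ordered by reverse inclusion so that the join is given by $L_1 \vee L_2 = L_1 \cap L_2$. Via the lattice isomorphism $L \mapsto W_L$ described just before the statement, this is the same as ordering $\mathcal{W}_{parab}$ by inclusion of parabolic subgroups, with join given by parabolic closure $[G_1,G_2]$. The action of $W$ on $\mathcal{L}$ by $w.L = w(L)$ (equivalently $w.W_L = wW_Lw^{-1}$) is order-preserving, since conjugation by $w$ sends parabolic subgroups to parabolic subgroups and respects inclusion.

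First I would observe that the commutative algebra $\kk\mathcal{L}$ spanned by the $e_L$ with $e_{L_1}e_{L_2} = e_{L_1 \vee L_2}$ is, by definition, the semigroup algebra $\kk M$ of the band $M$ associated to $\mathcal{L}$, and the $W$-action on $\kk\mathcal{L}$ is precisely the one induced from the order-preserving action on $\mathcal{L}$. Therefore $\mathcal{C}^p_W(1) = \kk W \ltimes \kk\mathcal{L}$ coincides with $\kk M \rtimes \kk W$. Applying the first two assertions of Proposition 3.7 yields the isomorphism $\mathcal{C}^p_W(1) \simeq \kk^{\mathcal{L}} \rtimes \kk W$, which is the first claim.

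For the second assertion, I would simply invoke the third part of Proposition 3.7 with $G = W$: when $\mathrm{char}\,\kk \nmid |W|$ the algebra $\kk^{\mathcal{L}} \rtimes \kk W$ is semisimple, and hence so is $\mathcal{C}^p_W(1)$. For the split-semisimplicity statement I would use the fourth part of Proposition 3.7: it suffices to check that for each $L \in \mathcal{L}$ the stabilizer $W_L^{stab} = \{w \in W : w(L) = L\}$ has split semisimple group algebra over $\kk$. Under the bijection $L \leftrightarrow W_L$, the stabilizer of $L$ in $W$ is exactly the normalizer $N_W(W_L)$ of the parabolic subgroup $W_L$, since $w(L) = L$ is equivalent to $wW_Lw^{-1} = W_L$ (parabolic subgroups are determined by their fixed-point spaces). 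As $L$ ranges over $\mathcal{L}$, $W_L$ ranges over all of $\mathcal{W}_{parab}$, so the hypothesis that $\kk N_W(W_0)$ is split semisimple for every $W_0 \in \mathcal{W}_{parab}$ is precisely what Proposition 3.7 requires.

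The only slightly delicate point, which I would state carefully but not belabor, is the identification of the stabilizer of $L \in \mathcal{L}$ under the $W$-action with $N_W(W_L)$; everything else is a clean transcription of Proposition 3.7 into the current notation. There is no essential obstacle — all the work was already done in the proof of Proposition 3.7, and the present statement amounts to checking that the combinatorial structure $(\mathcal{L}, W)$ attached to a complex reflection group fits the hypotheses of that proposition.
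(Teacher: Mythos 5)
Your proposal is correct and follows exactly the route the paper intends: the paper's entire justification is the single sentence ``Applying proposition 3.7, we have the following analogue of theorem 3.8,'' and your argument simply makes explicit the identifications (the join semilattice $\mathcal{L}$ with join $L_1\vee L_2=L_1\cap L_2$, the band algebra $\kk M=\kk\mathcal{L}$, the order-preserving $W$-action, and the stabilizer of $L$ being $N_W(W_L)$) that the paper leaves to the reader. Nothing further is needed.
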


We let $\mathcal{T}$ denote the holonomy Lie algebra of the hyperplane
complement $V \setminus \bigcup \mathcal{A}$. Recall from \cite{KOHNONAGOYA} that it is presented by generators $t_H,H \in \mathcal{A}$ and relations $[t_{H_0},t_E] = 0$ for all $H_0 \in \mathcal{A}$ and $E$ a codimension 2 subspace contained in $H_0$ inside the hyperplane lattice (such a subspace
is called a flat), where $t_E = \sum_{H \supset E} t_H$. It is acted upon by $W$ through $w.t_H = t_{w(H)}$.
For $H \in \mathcal{A}$ we let $W_H = \{ w \in W ; w_{|H} = \mathrm{Id}_H \}
\in \mathcal{W}_{parab}$. It is a cyclic group of order $m_H \in \Z_{\geq 2}$. It contains
a unique generator $s_H$ with eigenvalue $\exp(2 \ii \pi/m_H)$, that we call the distinguished
reflection associated to $H \in \mathcal{A}$. We remark that, if $H_2 = w(H_1)$ for some $w \in W$,
then $e_{H_2} = e_{H_1}$ and $w s_{H_1} w^{-1} = s_{H_2}$. 
The following simple fact
will be crucial for us. We state it as a lemma.
\begin{lemma} \label{lem:secomm} Let $H,H_0 \in \mathcal{A}$ and $s \in \mathcal{R}$ such
that $\Ker(s-1) = H$. Then $[se_H,e_{H_0}] = 0$.
\end{lemma}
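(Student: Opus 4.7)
The plan is to expand the commutator and reduce everything to a statement about the lattice $\mathcal{L}$. The key observation is that $s$ fixes $H = \Ker(s-1)$ pointwise, so in particular $s(H)=H$, and hence in the semidirect product $\mathcal{C}_W^p(1) = W \ltimes \kk\mathcal{L}$ the element $s$ commutes with $e_H$: indeed $s\,e_H = e_{s(H)}\,s = e_H\,s$. For an arbitrary $H_0 \in \mathcal{A}$ one only has the weaker relation $s\,e_{H_0} = e_{s(H_0)}\,s$.

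Using these, I would rewrite
$$
s\,e_H\,e_{H_0} = e_H\,s\,e_{H_0} = e_H\,e_{s(H_0)}\,s,\qquad e_{H_0}\,s\,e_H = e_{H_0}\,e_H\,s = e_H\,e_{H_0}\,s,
$$
where the last equality in the second chain uses the commutativity of $\kk\mathcal{L}$. Thus
$$
[s\,e_H,\,e_{H_0}] = \bigl(e_H\,e_{s(H_0)} - e_H\,e_{H_0}\bigr)\,s,
$$
and the claim reduces to the identity $e_H\,e_{s(H_0)} = e_H\,e_{H_0}$ in $\kk\mathcal{L}$.

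The remaining step is the key geometric input: since $e_{H_1}e_{H_2} = e_{H_1\vee H_2}$ in $\kk\mathcal{L}$, and the join in the lattice of flats corresponds to intersection of subspaces, it suffices to check that $H \cap s(H_0) = H \cap H_0$. This is elementary: if $x \in H\cap H_0$, then $s(x)=x$ because $x\in H$, so $x = s(x)\in s(H_0)$; conversely, if $y \in H\cap s(H_0)$, then $y = s^{-1}(y) \in H_0$ by the same reasoning. Hence both sides coincide as flats, the corresponding idempotents in $\kk\mathcal{L}$ are equal, and the commutator vanishes.

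There is no real obstacle here; the only thing to be careful about is distinguishing the two different roles of $s$ (as an element of $W$ acting by conjugation on $\kk\mathcal{L}$ versus as a factor in the semidirect product), and keeping track of the fact that although $s$ moves $H_0$ to $s(H_0)$, the product $e_H\,e_{H_0}$ is invariant because $H$ itself is fixed pointwise by $s$. In particular, the argument never uses the distinguished generator $s_H$: any reflection whose fixed hyperplane is $H$ works, which is precisely what the statement asserts.
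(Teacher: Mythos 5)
Your argument is correct and is essentially the paper's own proof: both reduce the commutator to the identity $e_{H\cap s^{\pm 1}(H_0)} = e_{H\cap H_0}$ in $\kk\mathcal{L}$, which follows because $s$ fixes $H=\Ker(s-1)$ pointwise and $H\cap H_0\subset H$. The only cosmetic difference is that you push $s$ to the right in both terms (via $se_H=e_Hs$), whereas the paper pushes it to the left and works with $s^{-1}(H_0)\cap H$ instead of $H\cap s(H_0)$.
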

\begin{proof} We have $se_H.e_{H_0}  = s e_{H \cap H_0}$ and
$e_{H_0}.se_H = s e_{s^{-1}(H_0)} e_H = s e_{s^{-1}(H_0)\cap H}$.
Since $s^{-1}$ acts by the identity on $H$, and $H_0 \cap H \subset H$,
we get $s^{-1}(H_0) \cap H = s^{-1}(H_0) \cap s^{-1}(H) = s^{-1}(H_0 \cap H) = H_0 \cap H$
hence $e_{H_0}.se_H =se_H.e_{H_0}$, which proves the claim. 
\end{proof}

Let us choose for each $H \in \mathcal{A}$ a collection of scalar parameters $\la^{(i)}_H, 0 \leq i < m_H$,
such that the condition $H_2 = w(H_1)$ for some $w \in W$ implies $\la^{(i)}_{H_1} = \la^{(i)}_{H_2}$
for all $0 \leq i < m_{H_1} = m_{H_2}$.

\begin{proposition} \label{prop:repholo} There exists a (necessarily unique) morphism of Lie algebras
$\varphi : \mathcal{T} \to \mathcal{C}_W(1)$ satisfying
$$
t_H \mapsto \left( \sum_{0 \leq i < m_H} \la^{(i)}_H s_H^i \right) e_H
$$
This morphism is $W$-equivariant.
\end{proposition}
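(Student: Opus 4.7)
The plan is to invoke the universal property of $\mathcal{T}$: it is presented as a Lie algebra by the generators $t_H$ and the Kohno--Drinfeld relations $[t_{H_0},t_E] = 0$ for $E$ a codimension-$2$ flat and $H_0 \supset E$ (with $t_E = \sum_{H \supset E} t_H$), so it suffices to exhibit elements $T_H := \bigl(\sum_{0 \le i < m_H} \la_H^{(i)} s_H^i\bigr) e_H \in \mathcal{C}^p_W(1)$ satisfying the analogous bracket relations. Uniqueness of $\varphi$ is automatic since the $t_H$ generate $\mathcal{T}$.

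The key computation is a clean factorization of $[T_{H_0}, T_H]$ for $H, H_0 \supset E$, driven by Lemma \ref{lem:secomm}. Writing $T_H = P_H(s_H) e_H$ with $P_H(X) = \sum_i \la_H^{(i)} X^i$, the lemma (applied to each pseudo-reflection $s_H^j$ for $1 \le j < m_H$, together with the trivial case $j = 0$ using commutativity of $\kk\mathcal{L}$) yields $[s_H^j e_H, e_{H_0}] = 0$ for all $H, H_0 \in \mathcal{A}$, and symmetrically $[s_{H_0}^i e_{H_0}, e_H] = 0$. Combined with the lattice identity $e_H e_{H_0} = e_{H \cap H_0} = e_E$ (valid for $H \ne H_0$, since any two distinct hyperplanes through the codimension-$2$ flat $E$ must meet exactly in $E$), these commutations let the idempotent factors slide freely past each other and produce
\[
 [T_{H_0}, T_H] \;=\; \bigl[P_{H_0}(s_{H_0}),\, P_H(s_H)\bigr]\, e_E,
\]
with the diagonal term $H = H_0$ vanishing automatically. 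Summing over $H \supset E$ then reduces the desired relation to
\[
 [T_{H_0}, T_E] \;=\; \bigl[P_{H_0}(s_{H_0}),\, \Sigma_E\bigr]\, e_E, \qquad \Sigma_E \;:=\; \sum_{H \supset E} P_H(s_H).
\]

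The main obstacle is to recognise that $\Sigma_E$ is central in the group algebra $\kk W_E$ of the rank-$2$ parabolic subgroup fixing $E$ pointwise. Since $W_E$ permutes the hyperplanes $H \supset E$ via $H \mapsto w(H)$ (using $w(E) = E$) and conjugation sends $s_H^i$ to $s_{w(H)}^i$, the hypothesis that $\la_H^{(i)}$ is constant on $W$-orbits (hence \emph{a fortiori} on $W_E$-orbits) implies $w \Sigma_E w^{-1} = \Sigma_E$ for all $w \in W_E$. Since $W_E$ is generated by the $s_H$ with $H \supset E$, this means $\Sigma_E$ commutes with every element of $\kk W_E$, and in particular with $P_{H_0}(s_{H_0}) \in \kk W_{H_0} \subset \kk W_E$. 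This kills the bracket and produces the desired Lie-algebra morphism $\varphi$.

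The $W$-equivariance is then a direct verification: using $w s_H^i w^{-1} = s_{w(H)}^i$, the semidirect-product relation $w e_H w^{-1} = e_{w(H)}$, and the parameter invariance $\la_H^{(i)} = \la_{w(H)}^{(i)}$, one computes
\[
 w T_H w^{-1} \;=\; \sum_i \la_H^{(i)} s_{w(H)}^i e_{w(H)} \;=\; P_{w(H)}(s_{w(H)})\, e_{w(H)} \;=\; T_{w(H)} \;=\; \varphi(w \cdot t_H),
\]
which extends by Lie-algebra generation to all of $\mathcal{T}$.
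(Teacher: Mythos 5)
Your proof is correct and follows essentially the same route as the paper: both arguments rest on Lemma \ref{lem:secomm}, the collapse $e_He_{H_0}=e_E$ for distinct $H,H_0\supset E$, and the constancy of the parameters $\la^{(i)}_H$ along $W$-orbits of hyperplanes. The only difference is organizational: the paper verifies that each monomial $se_{H_0}$ (for $s$ a pseudo-reflection with hyperplane $H_0$) commutes with the image of $t_E$, treating the $H=H_0$ term separately via commutativity of the cyclic group $W_{H_0}$, whereas you bracket the full $T_{H_0}$ against each $T_H$ and package the remaining cancellation as centrality of $\Sigma_E$ in $\kk W_E$.
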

\begin{proof} Let $E$ be a codimension 2 flat, and $H_0 \in \mathcal{A}$ such that $E \subset H_0$.
Let $s \in \mathcal{R}$ such that $\Ker(s-1) = H_0$. It is enough to prove that $s e_{H_0}$
commutes with the image of $t_E$ for all such $s$. We have
$$
s e_{H_0} \left( \sum_{H \supset E} \left( \sum_{0 \leq i < m_H} \la^{(i)}_H s_H^i \right) e_H \right) =
 \sum_{H \supset E}  \sum_{0 \leq i < m_H} \la^{(i)}_H  s s_H^i e_{s_H^i(H_0)}  e_H 
 $$
 {}
 $$
=  \sum_{0 \leq i < m_{H_0}} \la_{H_0}^{(i)} s s_{H_0}^i e_{H_0} +  \sum_{\stackrel{H \supset E}{H \neq H_0}}  \sum_{0 \leq i < m_H} \la^{(i)}_H  s s_H^i e_{s_H^i(H_0)}  e_H 
$$
and 
$$
\left( \sum_{H \supset E} \left( \sum_{0 \leq i < m_H} \la^{(i)}_H s_H^i \right) e_H \right) s e_{H_0}  =
 \sum_{H \supset E}  \sum_{0 \leq i < m_H} \la^{(i)}_H   s_H^i s e_{s^{-1}(H)}  e_{H_0} 
 $$
 {}
 $$
 = \sum_{0 \leq i < m_{H_0}} \la^{(i)}_{H_0}   s_{H_0}^i s   e_{H_0} +
  \sum_{\stackrel{H \supset E}{H \neq H_0}}  \sum_{0 \leq i < m_H} \la^{(i)}_H   s_H^i s e_{s^{-1}(H)}  e_{H_0}.
$$
We notice that $s s_{H_0}^i = s_{H_0}^i s$ for all $i$, since $s \in W_{H_0} = \langle s_{H_0} \rangle$.
Moreover, if $H \neq H_0$, then $s^{-1}(H) \neq H_0$ and $s_H^i(H_0) \neq H$,
and therefore $e_{s^{-1}(H)}  e_{H_0}  = e_{s_H^i(H_0)} e_H = 
e_E$. Therefore, it is sufficient to prove that
$$
\sum_{\stackrel{H \supset E}{H \neq H_0}}  \sum_{0 \leq i < m_H} \la^{(i)}_H  s s_H^i
 = 
s \left(
\sum_{\stackrel{H \supset E}{H \neq H_0}}  \sum_{0 \leq i < m_H} \la^{(i)}_H   s_H^i s \right)s^{-1}
$${}$$
=
\sum_{\stackrel{s(H) \supset E}{s(H) \neq H_0}}  \sum_{0 \leq i < m_H} \la^{(i)}_H   s_H^i s 
=
  \sum_{\stackrel{H \supset E}{H \neq H_0}}  \sum_{0 \leq i < m_H} \la^{(i)}_H   s_H^i s
  $$
  which holds true. The $W$-equivariance is clear.
\end{proof}

Now assume that the $\la_{H,i}$ are complex numbers, and that $\kk$ contains $\C[[h]]$ as a subring.
We consider the $\mathcal{C}_W(1)$-valued 1-form
$$
\om = \frac{1}{\ii \pi} \sum_{H \in \mathcal{A}} h\varphi(t_H) \om_H
$$
with $\om_H$ is the canonical logarithmic form around $H$ defined as $\mathrm{d} \alpha_H/\alpha_H$
for some linear form $\alpha_H \in V^*$ with kernel $H$.
Proposition \ref{prop:repholo} states that $\om \in \Omega^1(X) \otimes \mathcal{C}_W(1)$
is integrable and $W$-invariant. By monodromy we get a representation $B \to \mathcal{C}_W(1)$,
where the image of a braided reflection $\sigma$ associated to $s=s_{H}^k \in \mathcal{R}$
is conjugated to $s_H^k \exp(h \varphi(t_H))$, that is
$$
s_H^k \exp\left( h \left( \sum_{0 \leq i < m_H} \la^{(i)}_H s_H^i  \right) e_H\right).
$$
The algebra $\mathcal{C}_{W_H}(1)$ can be decomposed as $(\kk C) (e_H-1) \oplus (\kk C)e_H$
where $C = \langle s_H \rangle$, since $e_H$ is a central idempotent inside $\mathcal{C}_{W_H}(1)$.
This proves that $\sigma$ is semisimple with eigenvalues $1, \zeta^k$ together
with the $\zeta^{kr}\exp(h\sum_{0 \leq i < m_H} \la_H^{(i)} (\zeta)^{ri})$ for $0 \leq r < m_r$
for $\zeta = \exp(2 \ii \pi/m_H)$.

\subsection{Generalization of $\mathcal{C}_W^p$}

Let $a_{H,i}, 0 \leq i < m_H$ be a collection of parameters in $\kk$ such that
$a_{w(H),i} = a_{H,i}$ for all $H \in \mathcal{A}$, $w \in W$, and $a_{H,0}\in \kk^{\times}$.
The natural action of $W$ on $\mathcal{L}$ can be considered as an action of $B$,
through the natural morphism $\pi : B \onto W$, so that the
relation $g e_L = e_{\pi(g)(L)} g$ for all $g \in B$, $L \in \mathcal{L}$ holds inside $B \ltimes \kk \mathcal{L}$.

\begin{definition} \label{def:CWPcrg} The algebra $\mathcal{C}_W^p(\underline{a})$ is defined
as the quotient of the group algebra $B \ltimes \kk \mathcal{L}$ by the relations
$$\sigma^{m_H} = 1 + e_H (\sum_{k=0}^{m_H-1} a_{H,k}\sigma^k - 1)$$ for any braided reflection
$\sigma$ associated to $s_H \in \mathcal{R}$ with $H \in \mathcal{A}$.

\end{definition}

We remark that the relation of the semidirect product is the consequence
of the more elementary relations $g e_H = e_{\pi(g)(H)} g$
for $H \in \mathcal{A}$ and $g$ running among a generating system of $B$. Similarly,
since all braided reflections corresponding to some $H \in \mathcal{A}$ are conjugated
one to the other by an element of $B$, using the relations of the semidirect product, the defining
relations of $\mathcal{C}_W^p(\underline{a})$ can be replaced
by imposing the same relations but only for one braided reflection per conjugacy
class of hyperplanes. Since $B$ is known to be finitely generated (and even finitely presentable), these remarks prove
the following
\begin{proposition} {\ }
\begin{enumerate}
\item The algebra $\mathcal{C}^p_W(\underline{a})$ is finitely generated (and even finitely presentable) as an algebra.
\item If $W$ is a finite Coxeter group with generating system $S$, and if $a_{\Ker(s-1),0} = u_s$, $a_{\Ker(s-1),1}=u_s-1$,
then $\mathcal{C}^p_W(\underline{a}) = \mathcal{C}^p_W(\underline{u})$.
\end{enumerate}

\end{proposition}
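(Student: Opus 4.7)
For (1), the plan is to combine the three observations in the paragraph preceding the statement. Since $W$ is finite, both the hyperplane arrangement $\mathcal{A}$ and the intersection lattice $\mathcal{L}$ of flats are finite, so the collection $\{e_L : L \in \mathcal{L}\}$ is finite. By the Brou\'e--Malle--Rouquier paper \cite{BMR}, the braid group $B = \pi_1(X/W)$ is finitely presentable. Putting these together, I would present $\mathcal{C}_W^p(\underline{a})$ using a finite generating set of $B$ together with the $e_L$'s, with as relations: the finite presentation of $B$; the commutative algebra structure of $\kk\mathcal{L}$ (the multiplication table of the $e_L$'s); the commutation relations $g e_H = e_{\pi(g)(H)} g$ for $g$ in the chosen generating set of $B$ and $H \in \mathcal{A}$; and one modified reflection relation per $W$-orbit of hyperplanes. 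All four groups of relations are finite, and the remaining quadratic relations are obtained from a chosen orbit representative by conjugation in $B$ together with the semidirect product relations, as recorded in the text before the statement.

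For (2), the plan is to build mutually inverse $\kk$-algebra homomorphisms between $\mathcal{C}_W^p(\underline{u})$ and $\mathcal{C}_W^p(\underline{a})$. From $\mathcal{C}_W^p(\underline{u})$ to $\mathcal{C}_W^p(\underline{a})$, I would send each $g_s$ (for $s \in S$) to the canonical braided reflection $\sigma_s$, and each $e_t$ (for $t \in \mathcal{R}$) to $e_{\Ker(t-1)}$; in the reverse direction I would send $\sigma_s \mapsto g_s$ and $e_L$ to the class of $e_J$ for any $J \subset \mathcal{R}$ with $\mathrm{Fix}(\langle J \rangle) = L$ (such a $J$ exists since every flat is an intersection of reflecting hyperplanes, and the choice is immaterial since the parabolic-closure quotient identifies all such $e_J$). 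The braid relations, the semidirect product relations (matching relation (5) of $\mathcal{C}_W(\underline{u})$) and the idempotent and product relations (matching (2), (3)) translate directly between the two sides; for relation (4) of $\mathcal{C}_W(\underline{u})$, $e_t e_{t_1} = e_t e_{t t_1 t^{-1}}$, I would use that $t$ acts trivially on $\Ker(t-1)$, which yields $\Ker(t-1) \cap E = \Ker(t-1) \cap t(E)$ for any subspace $E$, hence equality of the two intersections representing the flats involved.

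The single nonroutine check is the quadratic relation, which is precisely where the specialization of parameters matters. For $m_H = 2$ and the prescribed values $a_{H,0} = u_s$, $a_{H,1} = u_s - 1$, the defining reflection relation of $\mathcal{C}_W^p(\underline{a})$ becomes
\[
\sigma_s^{2} = 1 + e_H\bigl(u_s + (u_s-1)\sigma_s - 1\bigr) = 1 + (u_s-1)\, e_H \,(1 + \sigma_s),
\]
which is precisely relation (6) of $\mathcal{C}_W(\underline{u})$ under $\sigma_s \leftrightarrow g_s$ and $e_H \leftrightarrow e_s$. Once this match is verified, the two morphisms are visibly mutually inverse on generators and are therefore isomorphisms. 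I expect the main obstacle to be not analytical but organizational: setting up carefully the dictionary between reflection-indexed idempotents on the Coxeter side and flat-indexed idempotents on the complex-reflection side, and confirming that the parabolic-closure quotient is automatically respected. This last point reduces to the identity $\prod_{t \in J} e_{\Ker(t-1)} = e_{\mathrm{Fix}(\langle J \rangle)}$, whose right-hand side depends only on the parabolic closure of $\langle J \rangle$, as needed.
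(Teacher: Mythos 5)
Your proposal is correct and follows essentially the same route as the paper: part (1) is exactly the combination of the finiteness of $\mathcal{L}$, the finite presentability of $B$, and the two reduction remarks (semidirect-product relations from a generating set, one quadratic relation per orbit of hyperplanes) that the paper records immediately before the statement. For part (2), which the paper leaves implicit, your dictionary $g_s \leftrightarrow \sigma_s$, $e_t \leftrightarrow e_{\Ker(t-1)}$, $e_L \leftrightarrow e_J$ with $\mathrm{Fix}(\langle J\rangle)=L$, together with the computation showing that $\sigma^2 = 1 + e_H(u_s + (u_s-1)\sigma - 1)$ is relation (6) of $\mathcal{C}_W(\underline{u})$, is precisely the intended verification.
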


The Hecke algebra $H_W(\underline{a})$ of $W$ is defined as the
quotient of $\kk B$ by the relations $\sigma^{m_H} = \sum_{k=0}^{m_H-1}a_{H,k}\sigma^k $
for any braided reflection
$\sigma$ associated to $s_H \in \mathcal{R}$ with $H \in \mathcal{A}$.
We remark that the algebra $\kk \mathcal{L}$ admits an augmentation map $\kk \mathcal{L} \to \kk$
defined by $e_L \mapsto 1$, which is split through $1 \mapsto e_W = e_{\{ 0 \}}$.
From this the following is immediate.
\begin{proposition} The maps $\kk B \to H_W(\underline{a})$ and $\kk \mathcal{L} \mapsto \kk$
together induce an algebra morphism $B \ltimes \kk \mathcal{L} \to H_W(\underline{a}) \otimes_{\kk} \kk = H_W(\underline{a})$. It factorizes through a surjective algebra morphism
$\mathfrak{p} : \mathcal{C}_W^p(\underline{a}) \onto H_W(\underline{a})$. The morphism $\kk B
\to \mathcal{C}_W^p(\underline{a})$ induced by $b \mapsto b e_W$ for $b \in B$
factors through a splitting $\mathfrak{q} : H_W(\underline{a}) \to \mathcal{C}_W^p(\underline{a})$
of $\mathfrak{p}$.
\end{proposition}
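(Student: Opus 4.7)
The plan follows the template of Proposition \ref{prop:exthecke} in the Coxeter case. First I would construct the map $\psi : B \ltimes \kk\mathcal{L} \to H_W(\underline{a})$ by combining the canonical projection $\kk B \onto H_W(\underline{a})$ with the augmentation $\kk\mathcal{L} \to \kk$, $e_L \mapsto 1$. The only point to verify is compatibility with the semidirect product relation $b e_L = e_{\pi(b)(L)} b$, but under $\psi$ each $e_L$ is killed to $1$ so both sides become the image of $b$ in $H_W(\underline{a})$.

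Next, to show $\psi$ descends to $\mathfrak{p} : \mathcal{C}_W^p(\underline{a}) \onto H_W(\underline{a})$, I would apply it to the defining relation of $\mathcal{C}_W^p(\underline{a})$: the relation
$\sigma^{m_H} = 1 + e_H(\sum_{k=0}^{m_H-1} a_{H,k}\sigma^k - 1)$
becomes $\sigma^{m_H} = 1 + (\sum_{k=0}^{m_H-1} a_{H,k}\sigma^k - 1) = \sum_{k=0}^{m_H-1} a_{H,k}\sigma^k$, which is exactly the defining relation of $H_W(\underline{a})$. Surjectivity of $\mathfrak{p}$ is immediate since $H_W(\underline{a})$ is generated by images of braided reflections.

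For the splitting, I would define $\mathfrak{q}$ on the generators of $B$ by $\sigma \mapsto \sigma e_W$, where $e_W = e_{\{0\}}$ corresponds to $W \in \mathcal{W}_{parab}$. The crucial observation is that $e_W$ is \emph{central and absorbing} in $\mathcal{C}_W^p(\underline{a})$: absorption is immediate from $e_L \cdot e_W = e_{L \vee \{0\}} = e_W$ (so in particular $e_H e_W = e_W$), and centrality follows from $b e_W = e_{\pi(b)(\{0\})} b = e_W b$, which holds because $\pi(b) \in W \subset \GL(V)$ fixes the origin. Multiplying the defining relation of $\mathcal{C}_W^p(\underline{a})$ on the right by $e_W$ then collapses it to
$\sigma^{m_H} e_W = \sum_{k=0}^{m_H-1} a_{H,k} \sigma^k e_W$,
that is $(\sigma e_W)^{m_H} = \sum_k a_{H,k} (\sigma e_W)^k$, which is the Hecke relation with $e_W$ playing the role of the unit. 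This confirms that the assignment extends to an algebra map $\mathfrak{q} : H_W(\underline{a}) \to \mathcal{C}_W^p(\underline{a})$ (non-unital on the nose, sending $1$ to $e_W$). A one-line check gives $\mathfrak{p}(\mathfrak{q}(\sigma)) = \mathfrak{p}(\sigma e_W) = \sigma$, so $\mathfrak{p}\circ\mathfrak{q} = \Id$.

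No step here is deep; the only point requiring any care is the centrality and absorbing property of $e_W$, after which the Hecke relation for $\sigma e_W$ follows essentially by multiplication. The argument runs in parallel with the Coxeter case of Proposition \ref{prop:exthecke}(2), the only new ingredient being that a complex reflection fixes the origin, ensuring that $\pi(b)(\{0\}) = \{0\}$ for all $b \in B$.
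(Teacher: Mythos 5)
Your proof is correct and follows exactly the route the paper intends: the paper dispatches this proposition with ``From this the following is immediate'' after noting the split augmentation $e_L \mapsto 1$, $1 \mapsto e_W$, and your write-up simply supplies the details, in parallel with the proof of Proposition \ref{prop:exthecke}(2) in the Coxeter case. The two points you single out --- that $e_W = e_{\{0\}}$ is central (since $W$ fixes the origin) and absorbing (since $L \vee \{0\} = \{0\}$ in $\mathcal{L}$), so that multiplying the defining relation by $e_W$ collapses it to the Hecke relation --- are precisely the content being left implicit.
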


\begin{proposition} \label{prop:monomorphsurj}
Assume that $\kk = \C[[h]]$, and $v_{H,i} = \zeta_{m_H}^i \exp(h  \tau_{H,i})$ for some
$\tau_{H,i} \in \C$.Then there exists a surjective morphism $\mathcal{C}_W^p(\underline{a}) \to
\mathcal{C}_W^p(1)$, where 
$$X^{m_H} - \sum_{0 \leq i <m_H} a_{H,i} X^i = \prod_{0 \leq i < m_H} (X- v_{H,i}).
$$
\end{proposition}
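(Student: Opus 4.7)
The plan is to realize $\Phi$ as a KZ-style monodromy representation. Starting from the spectral data $v_{H,i} = \zeta_{m_H}^i \exp(h \tau_{H,i})$, I first choose complex parameters $\lambda^{(i)}_H$ by inverting the discrete Fourier transform
\[
\tau_{H,r} \;=\; \sum_{0 \leq i < m_H} \lambda^{(i)}_H \, \zeta_{m_H}^{ri},
\]
which is uniquely solvable because the DFT matrix is invertible, and which inherits from the $v_{H,i}$ the $W$-orbit compatibility required by Proposition \ref{prop:repholo}. Feeding these $\lambda^{(i)}_H$ into that proposition produces a $W$-equivariant Lie morphism $\varphi : \mathcal{T} \to \mathcal{C}_W^p(1)$, and the associated $\mathcal{C}_W^p(1)$-valued one-form $\omega$ then yields, by the standard construction of monodromy for a $W$-equivariant integrable connection on $X/W$, a group morphism $\Psi : B \to \mathcal{C}_W^p(1)^{\times}$.

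Next, I would glue $\Psi$ with the inclusion $\kk \mathcal{L} \hookrightarrow \mathcal{C}_W^p(1)$ to build a $\kk$-algebra morphism $\tilde\Phi : \kk B \ltimes \kk \mathcal{L} \to \mathcal{C}_W^p(1)$. The non-trivial compatibility is the semidirect product relation $\Psi(g) e_L = e_{\pi(g)(L)} \Psi(g)$, which follows from the fact that $\omega$ descends to $X/W$ and is $W$-equivariant; at the algebraic level the key input is that $\varphi(t_H)$ commutes with every $e_L$, as follows from Lemma \ref{lem:secomm} together with the factorization $e_L = \prod_{H_0 \supset L} e_{H_0}$ (each factor $e_{H_0}$ commutes with each summand $s_H^i e_H$ of $\varphi(t_H)$), so the infinitesimal holonomy centralizes $\kk \mathcal{L}$ and the entire $W$-twisting is carried by the $s_H^k$-component of $\Psi$.

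The core algebraic check is then that $\tilde\Phi$ annihilates the polynomial relation of Definition \ref{def:CWPcrg}. Fix a braided reflection $\sigma$ around $s_H$. By the discussion after Proposition \ref{prop:repholo}, $\Psi(\sigma)$ is conjugate to $s_H \exp(h \varphi(t_H))$, and $\varphi(t_H) = f(s_H) e_H$ with $f(X) = \sum_i \lambda_H^{(i)} X^i$ commutes with $s_H$ (since $e_H$ is central in $\mathcal{C}_{W_H}(1)$ and $s_H \in W_H$). Since $\Psi(\sigma)$ commutes with $e_H$ (by the already-established semidirect relation and $\pi(\sigma)(H) = H$), the subalgebra generated by $\Psi(\sigma)$ decomposes via the idempotent $e_H$. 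On the $(1-e_H)$-part, $\varphi(t_H)$ vanishes, so $\Psi(\sigma)^{m_H}$ is conjugate to $s_H^{m_H} = 1$; on the $e_H$-part, the eigenvalues of $\Psi(\sigma)$ are exactly $\zeta_{m_H}^r \exp(h f(\zeta_{m_H}^r)) = \zeta_{m_H}^r \exp(h \tau_{H,r}) = v_{H,r}$, so $\prod_r (\Psi(\sigma) - v_{H,r}) = 0$ there. Adding the two pieces yields the desired relation $\Psi(\sigma)^{m_H} = 1 + e_H(\sum_k a_{H,k} \Psi(\sigma)^k - 1)$, so $\tilde\Phi$ factors through a morphism $\Phi : \mathcal{C}_W^p(\underline{a}) \to \mathcal{C}_W^p(1)$.

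For surjectivity, modulo $h$ one has $\Psi(\sigma) \equiv s_H \pmod h$, so $\tilde\Phi \pmod h$ hits the generators $W$ and $e_L$ of $\mathcal{C}_W^p(1)/h\mathcal{C}_W^p(1)$; since $\mathcal{C}_W^p(1)$ is a finite free $\C[[h]]$-module and $h$ lies in its Jacobson radical (by $h$-adic completeness), Nakayama's lemma upgrades this to surjectivity of $\Phi$. The main expected obstacle is rigorously establishing the semidirect product relation for $\Psi$ — i.e., organizing the KZ monodromy of a $W$-equivariant flat bundle on $X$ as an algebra morphism intertwining $W$ with conjugation inside $\mathcal{C}_W^p(1)$; once this is in place, the rest is a short algebraic verification based on Lemma \ref{lem:secomm} and the Fourier inversion in the first paragraph.
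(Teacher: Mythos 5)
Your proof follows essentially the same route as the paper's: Vandermonde/Fourier inversion to produce the $\la^{(i)}_H$, the monodromy morphism of Proposition \ref{prop:repholo}, extension to $B\ltimes \kk\mathcal{L}$ via the observation (resting on Lemma \ref{lem:secomm}) that the holonomy part of $R(b)$ centralizes the $e_L$ so that conjugation acts through $W$ alone, and finally the spectral decomposition along $e_H$ to verify the defining relation of $\mathcal{C}_W^p(\underline{a})$. The only addition is your explicit Nakayama argument for surjectivity, which the paper leaves implicit; this is a correct and welcome completion of the same argument rather than a different approach.
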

\begin{proof}
For instance by invertibility of the Vandermonde determinant, one can
find complex scalars $\la_{H,i}$ such that 
$\sum_{0 \leq i < m_H} \la_H^{(i)} (\zeta_H)^{ri}) = \tau_{H,r}$ for $0 \leq r < m_H$, with
$\zeta_H = \exp(2 \ii \pi/m_H)$. We consider the monodromy morphism $R : \kk B \to \mathcal{C}_W^p(1)$
constructed above. The image of a braided reflection $\sigma$ associated so $s_H$
has eigenvalues $1, \zeta$ and $\zeta^{r} \exp( h\tau_{H,r}) = v_{H,r}$. For instance
by using Chen's iterated integrals, we notice that, for $b \in B$,
$R(b)$ has the form $\beta M(h)$,
where $\beta \in W \subset \mathcal{C}_W^p(1)$ is the image of $b \in B$ under the natural map
$B \onto W$, and $M(h) \in A_0[[h]]$, where $A_0$ is the subalgebra of $\mathcal{C}_W^p(1)$
generated by the $s e_H$, for $s \in \mathcal{R}$ and $H = \Ker(s-1)$. Lemma \ref{lem:secomm}
implies that $A_0$ commutes with all $e_L, L \in \mathcal{L}$. Therefore,
we have $R(b) e_L R(b)^{-1} = \beta M(h) e_L M(h)^{-1} \beta^{-1} = \beta e_L \beta^{-1}
= e_{\beta(L)} = e_{b(L)}$. This proves that $R$ can be extended to
a morphism $B \ltimes \kk \mathcal{L} \to \mathcal{C}_W^p(1)$ through $b \otimes e_L \mapsto
R(b) \otimes e_L$.

It remains to prove that the defining relations of $\mathcal{C}_W^p(\underline{a})$ are satisfied.
Let $H \in \mathcal{A}$, $s = s_H$ and $\sigma$ a braided reflection associated to them.
For short, let $S = R(\sigma)$ and $S_0 = s \exp(h \varphi(t_H))$. We have $S = P s \exp(h \varphi(t_H)) P^{-1}$
for some $P \in A_0[[h]]$. Since $\varphi(t_H)$ commutes with $A_0$,
we get $S^m = P \exp(m h \varphi(t_H))P^{-1}
= 1+ P (\exp(m h \varphi(t_H))-1)P^{-1}$. We have $\exp(m h \tau_{H,r}) = v_{H,r}^{m_H}$
and $v_{H,r}^{m_H} - \sum_i a_{H,i} v_{H,r}^i = \prod_i (v_{H,r}-v_{H,i})=0$.
Now, the compared spectrum of the elements in play is as follows
$$
\begin{array}{|c||c|c|c|}
\hline 
S_0= s \exp(h \varphi(t_H)) & 1 & \zeta & v_{H,r} \\
\hline
e_H & 0 & 0 & 1 \\
\hline
S_0e_H & 0 & 0 & v_{H,r} \\
\hline
S_0^m & 1 & 1 & v_{H,r}^m \\
\hline 
S_0^m-1 & 0 & 0 & v_{H,r}^m -1 \\
\hline
e_HS_0^i & 0 & 0 & v_{H,r}^i \\
\hline
\end{array}$$
Therefore, we have $S_0^m -1 = e_H((\sum_{0 \leq i < m_H} a_{H,i} S_0^i) - 1)$
hence $S^m  
= 1 + Pe_H((\sum_{0 \leq i < m_H} a_{H,i} S_0^i) - 1)P^{-1}
= 1 + e_H((\sum_{0 \leq i < m_H} a_{H,i} (PS_0P^{-1})^i) - 1)
= 1 + e_H((\sum_{0 \leq i < m_H} a_{H,i} S^i) - 1)
$
and this proves the claim.

\end{proof}

We remark that proposition \ref{prop:braidmorphisms} admits no direct
generalization to the complex reflection groups setting, namely there
is not in general a 1-parameter family of morphisms $B \to \mathcal{C}_W^p(\underline{a})$
of a similar form. Indeed, let us consider for $W$ the group generated by 2-reflections
called $G_{12}$ in the Shephard-Todd classification. Its braid group has the presentation
$\langle s,t,u \ | \ stus = tust = ustu \rangle$ and $W = B/\langle s^2,t^2,u^2 \rangle$.
Letting $e_x \in \mathcal{C}_W^p(\underline{a})$ denote the idempotent associated
to the hyperplane $\Ker( x - 1)$, for $x \in W$ a reflection, one can check that 
there can be
a morphism $B \to \mathcal{C}_W^p(\underline{a})$ 
satisfying $y \mapsto y+ \la e_y y$, for $y \in \{ s, t, u \}$
only if the
4 reflecting hyperplanes associated to the reflections $\{ s, sts, stuts, stusuts \}$
are the same as the ones associated to the reflections $\{ t,tut,tusut,tustsut \}$ (equivalently, that these
two sets of 2-reflections are equal). One readily
checks that this does \emph{not} hold.

\subsection{An extended freeness conjecture}
For a $W$-orbit of hyperplanes $c$, the order $m_H$ of $W_H$ for $H \in c$
depends only on $c$. Therefore, we can denote it $m_c$, and define
a generic ring $R_W = \Z[ (a_{c,i},a_{c,0}^{-1}]$ for $c \in \mathcal{A}/W$ and $0 \leq i < m_c$.
The \emph{generic algebra} $\mathcal{C}_W^p$ is defined over the ring $\kk = \Z[ (a_{c,i},a_{c,0}^{-1}]$ as in definition \ref{def:CWPcrg} by letting $a_{H,i} = a_{c,i}$ if $H \in c$.

\begin{proposition} \label{prop:crit1freeness} If the algebra $\mathcal{C}^p_W$
is spanned by $|W|.|\mathcal{L}|$ elements as a $R_W$-module, then it is a free $R_W$-module of rank $|W|.|\mathcal{L}|$.
\end{proposition}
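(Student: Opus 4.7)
The plan is to use Proposition \ref{prop:monomorphsurj} for varying complex parameters $\tau=(\tau_{H,r})$, combined with a Zariski density argument in the specialization space, to control the kernel of the spanning surjection. Set $N=|W|\cdot|\mathcal{L}|$; an assumed spanning set $b_1,\ldots,b_N$ produces a surjection $\psi:R_W^N\twoheadrightarrow \mathcal{C}_W^p$, and the goal is to show its kernel $K$ is zero. For each $\tau$, Proposition \ref{prop:monomorphsurj} provides a ring homomorphism $\psi_\tau:R_W\to \C[[h]]$ and a surjective $\C[[h]]$-algebra morphism $\rho_\tau:\mathcal{C}_W^p\otimes_{R_W}\C[[h]]\twoheadrightarrow \mathcal{C}_W^p(1)\otimes_\C\C[[h]]$---taking the natural target $\mathcal{C}_W^p(1)[[h]]$ dictated by the monodromy construction in its proof---whose target is free of rank $N$ over $\C[[h]]$ by Proposition \ref{prop:CWcrgSS}.

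The next step is to deduce that $\mathcal{C}_W^p\otimes_{R_W}\C[[h]]$ is itself free of rank $N$ over $\C[[h]]$. Composing the spanning surjection $\C[[h]]^N\twoheadrightarrow \mathcal{C}_W^p\otimes\C[[h]]$ with $\rho_\tau$, once the target is identified with $\C[[h]]^N$, gives a surjective $\C[[h]]$-linear endomorphism of $\C[[h]]^N$, hence bijective by the classical theorem that a surjective endomorphism of a finitely generated module over a commutative ring is automatically injective. Thus the first factor is itself an isomorphism, so for any $k=(c_1,\ldots,c_N)\in K$ the componentwise image $(\psi_\tau(c_1),\ldots,\psi_\tau(c_N))\in\C[[h]]^N$ must equal zero (it maps to $\psi(k)\otimes 1=0$ under an isomorphism), forcing $c_i\in \mathfrak{q}_\tau:=\ker \psi_\tau$ for each $i$. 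Hence $K\subseteq \mathfrak{q}_\tau\cdot R_W^N$ for every choice of $\tau$.

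It remains to verify that $\bigcap_\tau \mathfrak{q}_\tau=0$, which will force $K=0$. Any $P\in R_W$ in this intersection satisfies $\psi_\tau(P)=0$ as a formal power series in $h$ for every $\tau$; since $a_{H,i}(h,\tau)$ is entire in $h$ (a polynomial in the $e^{h\tau_{H,r}}$), evaluating at $h=1$ shows that $P$ vanishes at every point $(a_{H,i}(1,\tau))\in \mathrm{Spec}(R_W\otimes \C)$. As $\tau_{c,r}$ varies over $\C$, the quantities $v_{c,r}=\zeta_{m_c}^r e^{\tau_{c,r}}$ range over all of $(\C^\times)^{m_c}$, and through the elementary symmetric polynomials they realize every polynomial of degree $m_c$ with nonzero constant term, so the image fills a Zariski-dense subset of $\mathrm{Spec}(R_W\otimes\C)$; therefore $P=0$. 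The main subtleties are, first, correctly interpreting $\rho_\tau$ as taking values in $\mathcal{C}_W^p(1)[[h]]$ (which the explicit monodromy formula from Proposition \ref{prop:repholo} makes transparent), and second, the density step that allows a single parametric family of specializations to detect all relations in $R_W$---together they bypass any need for a direct flatness or $h$-torsion analysis.
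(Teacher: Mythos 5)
Your argument is correct and is precisely the one the paper intends: the paper's proof simply defers to the argument of Brou\'e--Malle--Rouquier (and \cite{CYCLO}, proposition 2.4), whose content is exactly your chain of steps --- specialize via proposition \ref{prop:monomorphsurj} to get a surjection onto the free $\C[[h]]$-module $\mathcal{C}_W^p(1)[[h]]$ of rank $|W|\cdot|\mathcal{L}|$, use that a surjective endomorphism of a finitely generated module over a commutative ring is bijective to trap the kernel of the spanning map inside $\ker\psi_\tau\cdot R_W^N$, and conclude by Zariski density of the attainable parameters $(a_{c,i})$ with $a_{c,0}\neq 0$. You have correctly filled in the details the paper leaves to the reader, including the identification of the monodromy target as $\mathcal{C}_W^p(1)[[h]]$ and the evaluation at $h=1$ needed for the density step.
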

\begin{proof} The proof follows exactly the same lines as in \cite{BMR} (proof of theorem 4.24),
see also \cite{CYCLO} proposition 2.4, the `monodromic' ingredient being given by
proposition \ref{prop:monomorphsurj} above. It is left to the reader.
\end{proof}

We consider $\mathcal{C}^p_W(\underline{a})$ as a $\kk B$-module.
As a $\kk B$-module, it is generated by the $e_L, L \in \mathcal{L}$. Let
$\mathcal{E}_L = \sum_{F \subset L} (\kk B).e_F$,
$\mathcal{E}'_L = \sum_{\stackrel{F \neq L}{F \subset L}} (\kk B).e_F$
and $\overline{\mathcal{E}}_L = \mathcal{E}_L/\mathcal{E}'_L$.

\begin{lemma} If each $\overline{\mathcal{E}}_L$ is spanned as a $\kk$-module
by $|W|$ elements  of the form $b.e_L, b \in B$, then $\mathcal{C}^p_W(\underline{a})$
is spanned by $|W|.|\mathcal{L}|$ elements, and therefore it is a free $\kk$-module of rank $|W|.|\mathcal{L}|$.
\end{lemma}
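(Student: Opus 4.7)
The plan is to build a filtration of $\mathcal{C}^p_W(\underline{a})$ indexed by a linear extension of $(\mathcal{L},\subset)$ and to control each successive quotient using the hypothesis on $\overline{\mathcal{E}}_L$.

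First I would enumerate $\mathcal{L}$ as $L_1,\dots,L_N$ (with $N = |\mathcal{L}|$) along a linear extension of the subspace-inclusion order, so that $L_i \subsetneq L_j$ forces $i < j$. Setting $M_k = \sum_{i \le k} (\kk B)\, e_{L_i}$, one has $M_0 = 0$, $M_{k-1} \subseteq M_k$ for every $k$, and $M_N = \mathcal{C}^p_W(\underline{a})$ by the remark immediately preceding the lemma that the $e_L$ generate the algebra as a $\kk B$-module.

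The crucial observation will be that $M_k/M_{k-1}$ is a quotient of $\overline{\mathcal{E}}_{L_k}$. Indeed, $\mathcal{E}'_{L_k} = \sum_{F \subsetneq L_k}(\kk B)\,e_F$ is contained in $M_{k-1}$, because every $F \in \mathcal{L}$ with $F \subsetneq L_k$ equals some $L_i$ with $i<k$ by the enumeration. Since $\mathcal{E}_{L_k} = \mathcal{E}'_{L_k} + (\kk B)\,e_{L_k}$ and $(\kk B)\,e_{L_k} \subseteq M_k$, the natural map $(\kk B)\,e_{L_k} \to M_k/M_{k-1}$ is surjective, and its kernel contains the kernel of the surjection $(\kk B)\,e_{L_k} \twoheadrightarrow \overline{\mathcal{E}}_{L_k}$; hence it factors through a surjection $\overline{\mathcal{E}}_{L_k} \twoheadrightarrow M_k/M_{k-1}$. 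By hypothesis, $\overline{\mathcal{E}}_{L_k}$ is spanned as a $\kk$-module by $|W|$ elements of the form $b\, e_{L_k}$; lifting them back to $(\kk B)\,e_{L_k}$ yields $|W|$ elements of $M_k$ whose images span $M_k/M_{k-1}$.

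Iterating on $k$ produces a spanning set of $M_N = \mathcal{C}^p_W(\underline{a})$ of cardinality $N\cdot |W| = |\mathcal{L}|\cdot |W|$, which is the first assertion of the lemma. For the freeness statement, I would run the same construction over the generic ring $R_W$ to exhibit a spanning set of the generic algebra $\mathcal{C}^p_W$ of size $|W|\cdot|\mathcal{L}|$, and then invoke Proposition~\ref{prop:crit1freeness} to conclude that $\mathcal{C}^p_W$ is a free $R_W$-module of rank $|W|\cdot|\mathcal{L}|$; the specialization $\mathcal{C}^p_W(\underline{a}) = \mathcal{C}^p_W \otimes_{R_W}\kk$ then inherits freeness of the same rank by base change. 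The argument is essentially a bookkeeping exercise; the only genuinely substantive point is the inclusion $\mathcal{E}'_{L_k} \subseteq M_{k-1}$, which is exactly where the choice of a linear extension of $\subset$ on $\mathcal{L}$ is used.
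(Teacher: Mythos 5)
Your proof is correct and follows essentially the same route as the paper: the paper phrases it as an induction on $L_0$ along the well-ordering of $\mathcal{L}$ (reducing $b.e_{L_0}$ modulo $\mathcal{E}'_{L_0}$ using the hypothesis, then invoking the inductive hypothesis on the strictly smaller flats), which is exactly your filtration $M_k$ read one step at a time; your appeal to Proposition~\ref{prop:crit1freeness} over the generic ring for the freeness conclusion is also the intended argument, and is in fact spelled out more carefully than in the paper.
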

\begin{proof}
Assume that, for each $L$, we have elements $b_{L,w}, w \in W$
such that $\overline{\mathcal{E}}_L$ is spanned by the $b_{L,w}.e_L$.
We shall prove that $\mathcal{C}_W^p(\underline{a})$ is spanned
by the $b_{L,w}.e_L$ for $L \in \mathcal{L}, w \in W$. Since $\mathcal{C}_W^p(\underline{a})$
is generated as a $\kk B$-module by the $e_L, L \in \mathcal{L}$,
it is spanned as a $\kk$-module by the $b e_L, L \in \mathcal{L}$. Therefore,
it is sufficent to prove that such a $b e_{L_0}$ is a linear combination of the 
$b_{L,w}.e_L,L \in \mathcal{L}$. We prove this by induction on $L_0$
with respect to the well-ordering provided by the lattice $\mathcal{L}$.
If $L_0 = \{ 0 \}$, then $b.e_L = b.e_W \in \mathcal{E}_L = \overline{\mathcal{E}}$ and
we have the conclusion by assumption. If not, we know that
there exists scalars $\alpha_{L_0,w}, w \in W$ such that $x = b.e_{L_0} - \sum_{w \in W} \alpha_{L_0,w} b_{L_0,w}.e_{L_0}  \in \mathcal{E}'_{L_0}$. By the induction assumption we can
write $x$ as a linear combination of the $b_{L,w}e_L$ for $L \subsetneq L_0$, and
therefore $b.e_{L_0}$ as a linear combination of the $b_{L,w}e_L$ for $L \subset L_0$,
and this proves the claim.
\end{proof}

We notice that the action of $\kk B$ on $\overline{\mathcal{E}}_{\{ 0 \}} = \mathcal{E}_{\{ 0 \}}$
factorizes through $H_W(\underline{a})$, and therefore $\overline{\mathcal{E}}_{\{ 0 \}}$
is spanned by $|W|$ elements if and only if the BMR freeness conjecture is true for $W$.
We also notice that the action of $\kk B$ on $\overline{\mathcal{E}}_{V}$
factorizes through the regular representation of $\kk W$, hence $\overline{\mathcal{E}}_{V}$
is clearly spanned by $|W|$ elements.

In this way, the presumed fact that each $\overline{\mathcal{E}}_L$ is spanned
by $|W|$ elements appears as an intermediate between the trivial fact that $\kk W$
has this property and the BMR freeness conjecture that $H_W$ is spanned by $|W|$ elements.
For a given $L \neq \{ 0 \}$, and if true, it should be easier to prove than the freeness conjecture for $H_W$, since, at each stage, the relation $g_s^m = \dots$  to be used can be either the complicated (Hecke) one or the trivial one ($g_s^m = 1$). However, it does not seem to readily follow from
it, and therefore we propose it as a (a priori stronger) conjecture.

\begin{conjecture} \label{conj:freeness} (extended freeness conjecture) The algebra $\mathcal{C}_W^p(\underline{a})$ is a free $\kk$-module
of rank $|W|.|\mathcal{L}|$. Moreover, each module $\overline{\mathcal{E}_L}, L \in \mathcal{L}$,
is spanned by $|W|$ elements of the form $b_{L,w}.e_L, w \in W$, with $b_{L,w} \in B$ mapping to $w \in W$ under the natural map $B \onto W$.
\end{conjecture}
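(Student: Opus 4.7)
The plan is to reduce the freeness conjecture to a spanning statement via the two reductions already in place in the excerpt: by Proposition~\ref{prop:crit1freeness} it suffices to find an $R_W$-spanning set of size $|W|\cdot|\mathcal{L}|$, and by the lemma immediately following it suffices in turn to produce, for each flat $L\in\mathcal{L}$, a spanning set of $\overline{\mathcal{E}}_L$ of size $|W|$ of the form $b_{L,w}e_L$ with $\pi(b_{L,w})=w$. Two extreme cases are already handled in the paper: $\overline{\mathcal{E}}_V$ because the $\kk B$-action factors through the regular representation of $\kk W$, and $\overline{\mathcal{E}}_{\{0\}}$ by the BMR freeness conjecture for $W$. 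The intermediate flats constitute the heart of the matter.

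I would start by determining the \emph{residual} relations holding in $\overline{\mathcal{E}}_L$. For a braided reflection $\sigma$ around a hyperplane $H\in\mathcal{A}$ and any $b\in B$ of image $w=\pi(b)$, the defining relation of $\mathcal{C}^p_W(\underline{a})$ combined with $e_H\, b = b\, e_{w^{-1}(H)}$ and $e_{w^{-1}(H)}e_L = e_{w^{-1}(H)\cap L}$ yields, modulo $\mathcal{E}'_L$, the dichotomy
\[
\sigma^{m_H}\cdot be_L \;\equiv\;\begin{cases}\sum_k a_{H,k}\,\sigma^k\cdot be_L & \text{if } H\supseteq w(L),\\ be_L & \text{otherwise.}\end{cases}
\]
Thus the action of $\kk B$ on $\overline{\mathcal{E}}_L$ factors through a ``relative Hecke algebra'' $\mathcal{H}_L$ in which, on the piece labelled by $w(L)$, only those braided reflections whose hyperplane contains $w(L)$ participate in the cyclotomic Hecke relation, while the others are trivialized at their $m_H$-th power.

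Next I would fix a system of representatives for $W/N_W(W_L)$, together with compatible lifts $\tilde w\in B$. Every class in $\overline{\mathcal{E}}_L$ can then be written as $\tilde w\cdot b'\cdot e_L$ with $b'$ in a subgroup of $B$ projecting onto $N_W(W_L)$. Using Bessis's description of parabolic subgroups of complex braid groups, one extracts from $b'$ a ``local'' factor in the braid group $B(W_L)$ and a ``global'' factor capturing the $N_W(W_L)/W_L$-action, both acting on $e_L$. The first step forces the local factor to act through the cyclotomic Hecke algebra of $W_L$, which by the BMR freeness conjecture has rank $|W_L|$; aggregating over the orbit produces $|W/N_W(W_L)|\cdot|N_W(W_L)/W_L|\cdot|W_L|=|W|$ generators of the required type.

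The main obstacle to making this plan into a proof is the rigorous justification of the last step: one has to show that the kernel of the projection from the preimage of $N_W(W_L)$ in $B$ onto the cyclotomic Hecke algebra of $W_L$ genuinely annihilates $e_L$ modulo $\mathcal{E}'_L$. This is a ``parabolic covering'' assertion whose Coxeter version follows from Dyer's theorem combined with Matsumoto's lemma, and whose complex reflection version should follow from Bessis's results together with the recently completed BMR freeness conjecture; but each of the small exceptional Shephard--Todd groups presumably requires a dedicated verification. Consistency of the plan is supported by the two cases already treated in the paper (type~$A$, recovering Ryom-Hansen's result, and the case $G(d,1,n)$ giving an extension of the Ariki--Koike algebra).
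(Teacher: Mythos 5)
The statement you are trying to prove is stated in the paper as a \emph{conjecture}, and the paper offers no general proof: it only establishes the two reductions you quote (Proposition~\ref{prop:crit1freeness} and the lemma on the modules $\overline{\mathcal{E}}_L$) and then verifies the conjecture by explicit, case-specific computations for $W=G_4$ and $W=G(d,1,n)$ (exhibiting concrete spanning sets $\mathcal{B}e_1$, resp.\ $t_1^{k_1}\cdots t_n^{k_n}a_gE$, and checking stability under left multiplication by generators). So there is no ``paper's own proof'' to match your argument against, and your proposal cannot be accepted as a proof of the general statement.

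Concerning the content of your proposal: the opening computation is correct and worth keeping. Writing the defining relation as $\sigma^{m_H}=1+e_H(\sum_k a_{H,k}\sigma^k-1)$ and pushing $e_H$ past $\sigma^k b$ does give, modulo $\mathcal{E}'_L$, the dichotomy you state (the Hecke relation survives exactly when $w(L)\subseteq H$, and $\sigma^{m_H}$ acts as the identity otherwise); this is consistent with, and slightly refines, the paper's observations that $\overline{\mathcal{E}}_{\{0\}}$ sees the full Hecke algebra while $\overline{\mathcal{E}}_V$ sees only $\kk W$. The gap is everything after that. Your ``relative Hecke algebra'' $\mathcal{H}_L$ is only defined by the relations you have exhibited, and nothing guarantees that these relations suffice to cut the module down to rank $|W|$: in particular you have no control over the action of the pure braid group $P=\ker(B\onto W)$ on the classes $be_L$. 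The decomposition of $b'$ into a ``local'' factor in $B(W_{w(L)})$ and a ``global'' factor over $N_W(W_L)/W_L$ is not something Bessis's theory hands you (the normalizer of a parabolic subgroup of $B$ is not a semidirect product of this kind in general), and even granting such a decomposition, the count $|W/N_W(W_L)|\cdot|N_W(W_L)/W_L|\cdot|W_L|=|W|$ presupposes exactly the two facts you have not proved: that the local factor acts through the rank-$|W_L|$ cyclotomic Hecke algebra of $W_L$ \emph{on the class of $e_L$ modulo $\mathcal{E}'_L$}, and that the global factor contributes no further relations and no further elements. The ``parabolic covering'' assertion you flag as the main obstacle is therefore not a technical verification to be outsourced to the exceptional groups; it is essentially equivalent to the conjecture itself. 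As it stands, your text is a plausible strategy for attacking the conjecture, not a proof, and it should be presented as such.
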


If $\mathcal{C}^p_W$ is a $R_W$-module of rank $|W|. |\mathcal{L}|$, then
it is a free deformation of the algebra $\mathcal{C}_W(1)$, which is semisimple for $\kk= \Q$
by proposition \ref{prop:CWcrgSS}. Therefore, Tits' deformation theorem  (see e.g. \cite{GECKPFEIFFER}, \S 7.4) and
proposition \ref{prop:CWcrgSS} imply the following,
where $K_W$ denotes a
field containing $R_W$.

\begin{proposition} If the extended conjecture is true, then $\mathcal{C}_W^p \otimes_{R_W} K_W$
is semisimple. If moreover $K_W$ is algebraically closed, then 
$\mathcal{C}_W^p \otimes_{R_W} K_W \simeq \mathcal{C}_W^p(1) \otimes K_W
\simeq K_W^{\mathcal{L}} \rtimes K_W W$.
\end{proposition}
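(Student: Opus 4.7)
The plan is to view $\mathcal{C}_W^p$ as a free $R_W$-algebra of rank $|W|\cdot|\mathcal{L}|$ (granted by the extended conjecture), exhibit one explicit specialization recovering the semisimple reference algebra $\mathcal{C}_W^p(1)$, and then invoke Tits' deformation theorem.

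First I would identify an $R_W$-algebra map $\chi : R_W \to \Q$ defined by $a_{c,0} \mapsto 1$ and $a_{c,i} \mapsto 0$ for $i \geq 1$; this is well-defined because $a_{c,0}$ is sent to a unit. Under $\chi$, the defining relation of Definition \ref{def:CWPcrg} collapses to $\sigma^{m_H} = 1 + e_H(1-1) = 1$ for every braided reflection, so the natural map $B \ltimes \Q \mathcal{L} \to \mathcal{C}_W^p \otimes_{R_W,\chi} \Q$ factors through $\Q W \ltimes \Q \mathcal{L} = \mathcal{C}_W^p(1) \otimes \Q$. Comparing $\Q$-dimensions (both are $|W|\cdot|\mathcal{L}|$, using the extended conjecture on one side and Proposition \ref{prop:CWcrgSS} on the other) forces this surjection to be an isomorphism. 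By Proposition \ref{prop:CWcrgSS} the resulting algebra is semisimple, and split semisimple after base change to any algebraically closed field of characteristic $0$.

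Tits' deformation theorem (in the form of \cite{GECKPFEIFFER}, \S 7.4) now does the remaining work: for a finite free algebra over a Noetherian integral domain $R$ admitting a specialization to a split semisimple algebra over a residue field, the generic algebra over $\overline{\mathrm{Frac}(R)}$ is itself split semisimple with the same Wedderburn decomposition. Applied to $\mathcal{C}_W^p$ over $R_W$ together with the specialization $\chi$, this yields semisimplicity of $\mathcal{C}_W^p \otimes_{R_W} K_W$ for any field $K_W$ containing $R_W$ (by passing through the fraction field), and for $K_W$ algebraically closed the Wedderburn isomorphism $\mathcal{C}_W^p \otimes_{R_W} K_W \simeq \mathcal{C}_W^p(1) \otimes K_W$; the second identification with $K_W^{\mathcal{L}} \rtimes K_W W$ is then immediate from Proposition \ref{prop:CWcrgSS}.

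The only substantive point to verify is that the hypotheses of Tits' theorem are met: that $R_W$ is a sufficiently well-behaved base (it is, being a localization of a polynomial ring over $\Z$), that $\chi$ factors through a residue field of $R_W$ (it does, since $a_{c,0}$ is made invertible and the remaining parameters are unconstrained), and that the reference algebra $\mathcal{C}_W^p(1)$ is split semisimple over an algebraically closed field of characteristic $0$ — which, via Proposition \ref{prop:CWcrgSS}, reduces to split semisimplicity of the group algebras $K_W N_W(W_0)$, automatic over any algebraically closed field of characteristic not dividing $|W|$. Once these are in place, no deeper argument is required beyond the machinery already summarized.
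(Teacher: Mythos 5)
Your proposal is correct and follows essentially the same route as the paper: the extended conjecture gives freeness of rank $|W|\cdot|\mathcal{L}|$, the algebra is recognized as a free deformation of $\mathcal{C}_W^p(1)$ (you make explicit the specialization $a_{c,0}\mapsto 1$, $a_{c,i}\mapsto 0$ that the paper leaves implicit), and Tits' deformation theorem together with the semisimplicity statement for $\mathcal{C}_W^p(1)$ finishes the argument.
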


If $W$ has rank 2 and the BMR freeness conjecture is true for $W$, the proof is reduced
to the consideration of the $\overline{\mathcal{E}}_H$ for $H \in \mathcal{A}$.
Since 
$g b e_L g^{-1} = gbg^{-1} e_{\pi(g)(L)}$ for all $g \in B$,
we moreover need to consider only one hyperplane per $W$-orbit.

\subsection{The case of $G_4$}

The smallest non-trivial example of an irreducible non-real complex reflection group
outside the infinite series of monomial groups
is the group $Q_8 \rtimes \Z_3$ denoted $G_4$ in Shephard-Todd notation.
It is also the group for which the original BMR freeness conjecture has had, so far, the
more topological applications (see e.g. \cite{LINKSGOULD, TRBMW}).
In this case $B = \langle s_1,s_2 \ | \ s_1 s_2 s_1 = s_2 s_1 s_2 \rangle$ is the Artin
group of type $A_2$ (a.k.a. the braid group on 3 strands) and $W$ is the quotient of $B$
by the relations $s_1^3 = s_2^3 = 1$. A proof of the original BMR freeness conjecture
for this case can be found for instance in \cite{HECKECUBIQUE}. 

We let $\mathcal{B} = \{ 1, s_1^{\eps}, s_1^{\alpha} s_2^{\eps} s_1^{\beta}, s_2^{-1}s_1 s_2^{-1} s_1^{\alpha} \}$ where $\alpha,\beta \in \{-1,0,1 \}$, $\eps \in \{-1, 1 \}$. We have $\# \mathcal{B} = 24 = |W|$,
and we want to prove that $\mathcal{B}.e_1 \subset E = \overline{\mathcal{E}_1}$
is a $\kk B$-submodule. This will prove $\mathcal{B}.e_1 = E$ since $1 \in \mathcal{B}$.
Since $B$ is generated as a group by $s_1,s_2$ it is sufficient to prove that $s_i be_1 \subset \mathcal{B}.e_1$ for $i \in \{ 1, 2 \}$ and $b \in \mathcal{B}$. We let $e_i = e_{\Ker(s_i-1)}$
pour $i \in \{1,2,3,4 \}$ by letting
$s_3 = s_1 s_2 s_1^{-1}$, $s_4 = s_2 s_1 s_2^{-1}$.
Inside $E$ we have $e_i e_j = \delta_{ij}e_i = \delta_{ij}e_j$. 
We have $s_i e_j =e_{\sigma_i(j)} s_i$ where $\sigma_1 = (2,3,4)$ and $\sigma_2 = (1,4,3)$.
By definition we have $s_i^3 = 1 + e_i P_i = 1+P_i e_i$ for $P_i = a s_i^2 + b s_i + c-1$,
and this implies $s_i^2 = s_i^{-1} + e_i Q_i = 1+Q_i e_i$ with $Q_i = a s_i + b + (c-1)s_i^{-1}$.

\begin{lemma}{\ } \label{lem:techG4}
\begin{enumerate}
\item For all $m \in \Z$, $b \in B$, $i \in \{1,2\}$, we have $s_i^m b e_1 \in \langle s_i^{\alpha} b e_1,
\alpha \in \{-1,0,1 \} \rangle$.
\item For all $\alpha,\beta \in \Z$ we have
$s_2^{\alpha} s_1 s_2^{-1} s_1^{\beta} e_1 \in \langle \mathcal{B}e_1 \rangle$.
\item For all $x \in \langle s_1^{m}, m \in \Z  \rangle$ we have
$(s_2 s_1^{-1} s_2)s_1 x e_1 = s_2^{-1} s_1 s_2^{-1} x e_1$
\item For all $x \in \langle s_1^{m}, m \in \Z  \rangle$ we have $(s_2^{-1} s_1 s_2^{-1})x e_1 = x (s_2^{-1} s_1 s_2^{-1}) e_1$.
\end{enumerate}
\end{lemma}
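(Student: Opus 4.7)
The plan is to work throughout inside $E = \overline{\mathcal{E}_1}$, leveraging the decisive feature of $G_4$ that any two distinct reflections have parabolic closure equal to the whole $W$, so $e_i e_j$ collapses to $e_{\{0\}} = 0$ in $E$ whenever $i \neq j$. Combined with the commutation $g \, e_H = e_{\pi(g)(H)} \, g$, this yields a workhorse identity: for any $b \in B$ and any reflection $s$ with hyperplane $H_s$, one has $e_s \cdot b e_1 = b e_1$ in $E$ if $\pi(b)(H_1) = H_s$, and $e_s \cdot b e_1 = 0$ in $E$ otherwise. I will also freely use $e_i s_i = s_i e_i$ and the braid consequence $s_1 s_2 s_1^{-1} = s_2^{-1} s_1 s_2$ (equivalently $s_2 s_1 s_2^{-1} = s_1^{-1} s_2 s_1$).

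For part (1), I would case-split on whether $\pi(b)(H_1) = H_i$. In both cases, the cubic relation $s_i^3 = 1 + e_i P_i$ and its derivative $s_i^2 = s_i^{-1} + e_i Q_i$, together with the workhorse identity, reduce $s_i^{\pm 3} b e_1$ to a $\kk$-linear combination of $s_i^{-1} b e_1, \, b e_1, \, s_i b e_1$ (with invertible leading coefficient, coming from the invertibility of the constant term $c$ of $P_i$), so an induction on $|m|$ concludes. Part (2) then follows by splitting on $\alpha \in \{-1, 0, 1\}$: the cases $\alpha \in \{-1, 0\}$ lie in $\mathcal{B}$ verbatim, and for $\alpha = 1$ the braid consequence $s_2 s_1 s_2^{-1} = s_1^{-1} s_2 s_1$ rewrites the element as $s_1^{-1} s_2 s_1^{\beta+1} e_1$, which lies in $\mathcal{B} e_1$ for $\beta \in \{-1, 0\}$ and, for $\beta = 1$, after applying part (1) to reduce $s_1^2 e_1$ to $\kk$-combinations of $s_1^{\pm 1} e_1$ and $e_1$.

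For part (3), the braid relation supplies $(s_2 s_1^{-1} s_2) s_1 \cdot s_2 = s_2 s_1^{-1}(s_1 s_2 s_1) = s_2^2 s_1$, hence $(s_2 s_1^{-1} s_2) s_1 = s_2^2 s_1 s_2^{-1}$ in $B$; the claim therefore reduces to $(s_2^2 - s_2^{-1}) s_1 s_2^{-1} x e_1 = 0$ in $E$. Since $s_2^2 - s_2^{-1} = e_2 Q_2$, it suffices to verify $e_2 \cdot s_1 s_2^{-1} x \cdot e_1 = 0$ in $E$, i.e., $\pi(s_1 s_2^{-1} x)(H_1) \neq H_2$ for $x = s_1^k$; a direct computation in $W = G_4$ gives $s_2^{-1}(H_1) = H_3$ and shows that $s_1$ acts on $\{H_2, H_3, H_4\}$ as a $3$-cycle, so $\pi(s_1 s_2^{-1} s_1^k)(H_1) = s_1(H_3) = H_4 \neq H_2$.

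Part (4) is the main obstacle. The starting point is to verify in $W = G_4$, using the braid and cubic relations, the identity $(s_2^{-1} s_1 s_2^{-1}) s_1 (s_2 s_1^{-1} s_2) = s_1$, so $\pi(A)$ centralizes $s_1$ and in particular fixes $H_1$, where $A := s_2^{-1} s_1 s_2^{-1}$; this yields the exact equality $A \, e_1 = e_1 \, A$ in $\mathcal{C}_W^p(\underline{a})$. Combined with $s_1^k e_1 = e_1 s_1^k$, both sides of (4) become $e_1$ times an element of $\kk B$, and their difference equals $e_1 \cdot [A, s_1^k]$. Since $[A, s_1^k] \in \Ker \pi$ it also commutes with $e_1$, and the recursion $[A, s_1^k] = [A, s_1] s_1^{k-1} + s_1 [A, s_1^{k-1}]$, together with the two-sided $\kk B$-submodule structure of $\mathcal{E}'_1 = \kk B \, e_{\{0\}}$ (from $b \, e_{\{0\}} = e_{\{0\}} \, b$), reduces the claim to the base case $k = 1$. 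For the base case, I would first derive the intermediate identity $s_1 s_2 s_1^{-1} s_2 \cdot e_1 = A \, e_1$ in $E$, using $s_1 s_2 s_1^{-1} = s_2^{-1} s_1 s_2$ and the $E$-level relation $s_2^2 e_1 = s_2^{-1} e_1$; multiplying this on the right by $s_1$ gives $A s_1 e_1 = s_1 s_2 s_1^{-1} s_2 s_1 \cdot e_1$, while multiplying on the left by $s_1$ gives $s_1 A e_1 = s_1^2 s_2 s_1^{-1} s_2 \cdot e_1$. Cancelling a common $s_1$ on the left then reduces the base case to $s_2 s_1^{-1} s_2 s_1 \cdot e_1 = s_1 s_2 s_1^{-1} s_2 \cdot e_1$ in $E$, and both sides equal $A \, e_1$ --- the left by part (3) at $x = 1$, the right by the intermediate identity itself --- closing the proof.
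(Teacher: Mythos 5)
Your proof is correct. Parts (1)--(3) follow essentially the same route as the paper's: the deformed cubic relation plus the vanishing $e_j\, b\, e_1=0$ in $\overline{\mathcal{E}_1}$ for $j$ with $\pi(b)(H_1)\neq H_j$ (your explicit use of the invertibility of the constant term $c=a_{H,0}$ to handle negative exponents is actually spelled out more carefully than in the paper's one-line induction), the braid rewriting $s_2s_1s_2^{-1}=s_1^{-1}s_2s_1$ for (2), and for (3) the identity $(s_2s_1^{-1}s_2)s_1=s_2^2s_1s_2^{-1}$ together with $e_2\,s_1s_2^{-1}s_1^k\,e_1=0$, which is exactly the paper's computation $e_2s_1s_2^{-1}=s_1s_2^{-1}e_3$ and $e_3s_1^{\alpha}e_1=0$. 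Part (4) is where you genuinely depart. The paper proves the single case $x=s_1^{-1}$ by a direct chain of manipulations (replacing $s_1$ by $s_1^{-2}$ modulo terms killed by $e_1(\cdot)e_1=0$ and applying the braid relation twice) and then asserts that this propagates to all powers; that propagation is left implicit and in effect requires rerunning the computation with $ze_1$ in place of $e_1$. You instead observe that $A=s_2^{-1}s_1s_2^{-1}$ maps in $G_4$ to a central element times $s_1^{-1}$, hence fixes $H_1$ and commutes \emph{exactly} with $e_1$ in $\mathcal{C}_W^p(\underline{a})$; this converts (4) into the vanishing of $e_1[A,s_1^k]$ in $\overline{\mathcal{E}_1}$, the commutator recursion reduces that to $k=1$, and the $k=1$ case falls out of part (3) at $x=1$ combined with your intermediate identity $s_1s_2s_1^{-1}s_2\,e_1=A\,e_1$. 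This is a more structural argument: it explains why (4) holds and makes the passage from one power to all powers mechanical, at the cost of one extra verification in $W$. The only point worth adding explicitly is the negative-$k$ direction, e.g.\ via $[A,s_1^{-1}]=-s_1^{-1}[A,s_1]s_1^{-1}$, which is covered by the same stability of $\mathcal{E}'_1=\kk B\, e_{\{0\}}$ under left and right multiplication by $B$.
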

\begin{proof}
Since $s_i^2 = s_i^{-1} + e_i Q_i$ we have
$s_i^2b e_1 = s_i^{-1}b e_1 +  Q_i e_i b e_1 = s_i^{-1}b e_1 +  Q_i e_{\beta(i)} e_1
=  s_i^{-1}b e_1 +  Q_i \delta_{\beta(i),1} e_1$ for some $\beta \in \mathfrak{S}_4$.
This proves $s_i^2 b e_1 \in  \langle s_i^{\alpha} b e_1,
\alpha \in \{-1,0,1 \} \rangle$ hence (1).

We now prove (2). By item (1) we can assume $\alpha,\beta \in \{-1,0,1 \}$. If $\alpha \in \{ 0, -1 \}$,
then $s_2^{\alpha} s_1 s_2^{-1} s_1^{\beta} e_1  \in \mathcal{B} e_1$ by definition.
If $\alpha=1$, then $s_2^{\alpha} s_1 s_2^{-1} s_1^{\beta} e_1 = (s_2 s_1 s_2^{-1}) s_1^{\beta} e_1
= s_1^{-1} s_2 s_1 s_1^{\beta} e_1 =  s_1^{-1} s_2 s_1^{\beta+1} e_1$ by the braid relation. This element belongs to $\langle \mathcal{B} e_1 \rangle$ by (1), and this proves (2).

We now prove (3). We have $s_2 (s_1^{-1} s_2s_1) x e_1=
s_2^2 s_1s_2^{-1} x e_1= (s_2^{-1} - Q_2e_2) s_1 s_2^{-1} x e_1
= s_2^{-1}  s_1 s_2^{-1} xe_1  - Q_2 e_2s_1 s_2^{-1} x e_1$. Now $e_2 s_1 s_2^{-1} = s_1 s_2^{-1} e_3$ and $e_3s_1^{\alpha}e_1 = 0$ for all $\alpha$, hence $ Q_2 e_2s_1 s_2^{-1} x e_1 = Q_2 s_1 s_2^{-1}x e_3 e_1$.

We now prove (4). We first assume $x = s_1^{-1}$.
We have $s_1 s_2^{-1} s_1^{-1}e_1 = (s_1^{-2} - A e_1) s_2^{-1} s_1^{-1} e_1 $for some $A \in \kk B$.
Since $e_1 s_2^{-1} s_1^{-1} e_1 = 0$ we get $s_1 s_2^{-1} s_1^{-1}e_1 = s_1^{-2}  s_2^{-1} s_1^{-1} e_1= s_1^{-1} (s_1^{-1} s_2^{-1} s_1^{-1}) e_1 =  s_1^{-1} s_2^{-1} s_1^{-1} s_2^{-1} e_1
$. Thus $s_2^{-1} . s_1 s_2^{-1} s_1^{-1}e_1 = s_2^{-1}.s_1^{-1} s_2^{-1} s_1^{-1} s_2^{-1} e_1
= s_1^{-1}s_2^{-1} s_1^{-1} s_1^{-1} s_2^{-1} e_1$ by the braid relation
Now, $s_1^{-2} s_2^{-1} e_1 = s_1 s_2^{-1} e_1$ since $e_1 s_2^{-1} e_1 = 0$
hence $s_2^{-1} . s_1 s_2^{-1} s_1^{-1}e_1 =  s_1^{-1}s_2^{-1} s_1 s_2^{-1} e_1$
and we have proven (4) for $x = s_1^{-1}$. This implies that (4) holds true for every
power of $s_1^{-1}$, whence the claim.

\end{proof}

We now prove that $s_i b e_1 \in \langle \mathcal{B} e_1 \rangle$ for all $b \in \mathcal{B}$, $i \in \{1,2 \}$. 
For $b \in \{ 1 , s_1^{\eps}  \}$ this is clearly true. Now assume
$b = s_1^{\alpha} s_2^{\eps} s_1^{\beta}$. If $i = 1$
this is an immediate consequence of the definition of $\mathcal{B}$ and of lemma \ref{lem:techG4} (1).
We consider $s_2 b e_1$. If $\eps = -1$, then $s_2 b e_1 = s_2 s_1^{\alpha} s_2^{-1} s_1^{\beta}e_1
 = s_1^{-1} s_2^{\alpha} s_1 s_1^{\beta} e_1$ by the braid relation, 
 hence $s_2 b e_1  \in \langle \mathcal{B} e_1 \rangle$. Therefore we can assume $\eps = 1$.
If $\alpha = 0$, then $s_2 b e_1 = s_2^{2} s_1^{\beta} e_1
\in \langle \mathcal{B} e_1 \rangle$ by lemma \ref{lem:techG4} (1). If $\alpha = 1$,
we have $s_2 b e_1 = (s_2 s_1 s_2) s_1^{\beta} e_1
= (s_1 s_2 s_1) s_1^{\beta} e_1 \in \langle \mathcal{B} e_1 \rangle $ by (1).
If $\alpha = -1$, then $s_2 b e_1 = s_2 s_1^{-1} s_2 s_1^{\beta} e_1
 = s_2 s_1^{-1} s_2 s_1 s_1^{\beta-1} e_1
  = s_2^{-1} s_1 s_2^{-1} s_1^{\beta-1} e_1$ by (3), hence $s_2 b e_1 \in \langle \mathcal{B} e_1 \rangle$.
This concludes the case $b = s_1^{\alpha} s_2^{\eps} s_1^{\beta}$. We now assume $b =  s_2^{-1} s_1 s_2^{-1} s_1^{\alpha}$. We have $s_2 b e_1 = s_1 s_2^{-1} s_1^{\alpha} e_1 \in \mathcal{B}$,
and $s_1 b e_1 = s_1 s_2^{-1} s_1 s_2^{-1} s_1^{\alpha} e_1 = s_2^{-1} s_1 s_2^{-1} s_1^{\alpha+1}e_1$
by (4), which belongs to $\langle \mathcal{B} e_1 \rangle$ by (1). This proves conjecture \ref{conj:freeness} for $W = G_4$.

\subsection{An extended Ariki-Koike algebra}

Let $W = G(d,1,n)$ be the group of $n\times n$ monomial matrices with entries in $\mu_d(\C) \cup \{ 0 \}$. In this case $H_W$ is known
as the Ariki-Koike algebra, and $B$ is the Artin group of type $B_n/C_n$, with generators $t = a_1,a_2,\dots, a_n$.
The images of $a_i$ and $a_j$ inside $W$ satisfy an Artin (braid) relation of length $4$ if $\{i,j\} = \{1,2 \}$, $2$ if $|i-j|\geq 2$,
and $3$ otherwise.
If we abuse notations by letting $e_b, b \in B$ be equal to $e_{\Ker(\beta-1)}$ for $\beta \in W$
the image of $b$, we have inside $\mathcal{C}_W^p$
the relations $t^d = 1 + (q-1) e_tP(t)$ for some polynomial $P$ of degree at most $d-1$,
and $a_i^2 = 1+(q-1)(a_i+1)e_{a_i}$ for $i \geq 2$. We adapt the arguments of \cite{ARIKIKOIKE}
to prove conjecture \ref{conj:freeness}.

First of all, we let $t_1 = t$, $t_i = a_i t_{i-1} a_i$. There is a classical injective morphism $B \to B_{n+1}$, where $B_{n+1}$ is the usual braid group on $n+1$ strands, given by $t \mapsto \sigma_1^2$,
$a_i \mapsto \sigma_{i+1}$ for $i \geq 2$, where $\sigma_1,\dots,\sigma_{n+1}$ denote the classical Artin generators. Under this map, each $t_i$ is mapped to $\delta_{i+1} = \sigma_{i+1}\sigma_i
\dots \sigma_2 \sigma_1^2 \sigma_2 \dots \sigma_{i+1}$, and $\delta_i = z_{i+1}/z_i$ where
$z_i = (\sigma_1\dots\sigma_{i-1})^i$ is the canonical generator of $Z(B_i)$. From this, we have the
following relations inside $B$, and therefore inside $\mathcal{C}_W^p$:
\begin{enumerate}
\item For all $i,j$ with $j \not\in \{i,i+1\}$ we have $a_j t_i = t_i a_j$
\item For all $i,j$ we have $t_i t_j = t_j t_i$
\item For all $i$, we have $a_i t_{i-1} t_i = t_{i-1}t_i a_i$.
\end{enumerate}
Let $E$ denote the (commutative) subalgebra of $\mathcal{C}_W^p$
generated by the $e_H, H \in \mathcal{A}$. 
Note that $E b \subset bE$ for all $b \in B$. The above equalities moreover imply that
the $t_i, i \geq 1$ generate a commutative subalgebra of $\mathcal{C}_W^p$.
We prove the following lemma.
\begin{lemma} \label{lem:techAK} For all  $k \geq 1$,
\begin{enumerate}
\item For all $i \geq 2$, $a_i t_i^k \in t_{i-1}^k a_i E + \sum_{j=1}^k t_{i-1}^{j-1} t_i^{k+1-j} E$
\item For all $i \geq 1$, $a_{i+1} t_i^k \in t_{i+1}^k a_{i+1} E + \sum_{j=1}^k t_i^{k-j} t_{i+1}^j E$
\end{enumerate}
\end{lemma}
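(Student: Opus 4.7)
The plan is to prove both parts simultaneously by induction on $k \geq 1$, since their inductive steps share the same structure. The crucial input is a careful treatment of the $k=1$ case for each part; the induction step then follows mechanically from $Eb \subset bE$ and the commutativity of the $t_j$'s.

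For the base case of (1), expanding $a_i t_i = a_i^2 t_{i-1} a_i$ and substituting the quadratic relation $a_i^2 = 1 + (q-1)(a_i+1) e_{a_i}$ yields
$$a_i t_i = t_{i-1} a_i + (q-1) a_i e_{a_i} t_{i-1} a_i + (q-1) e_{a_i} t_{i-1} a_i.$$
Pushing idempotents to the right via $Eb \subset bE$, the last summand lies in $t_{i-1} a_i E$; using also $a_i e_{a_i} = e_{a_i} a_i$ (since $a_i$ stabilizes its own fixed hyperplane) together with $a_i t_{i-1} a_i = t_i$, the middle summand equals $(q-1) e_{a_i} t_i \in t_i E$. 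This gives $a_i t_i \in t_{i-1} a_i E + t_i E$.

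For the base case of (2), the identity $t_{i+1} = a_{i+1} t_i a_{i+1}$ gives $a_{i+1} t_i = t_{i+1} a_{i+1}^{-1}$, so it suffices to write $a_{i+1}^{-1}$ as an element of $a_{i+1} E + E$. Multiplying $a_{i+1}^2 = 1 + (q-1)(a_{i+1}+1) e_{a_{i+1}}$ on the right by $a_{i+1}^{-1}$ and using $a_{i+1}^{\pm 1} e_{a_{i+1}} = e_{a_{i+1}} a_{i+1}^{\pm 1}$, one derives
$$a_{i+1}^{-1}\bigl(1 + (q-1) e_{a_{i+1}}\bigr) = a_{i+1} - (q-1) e_{a_{i+1}}.$$
A direct calculation with $e_{a_{i+1}}^2 = e_{a_{i+1}}$ shows that $1 + (q-1) e_{a_{i+1}}$ has inverse $1 + (q^{-1}-1) e_{a_{i+1}}$ whenever $q$ is invertible (as in $R_W$); this yields $a_{i+1}^{-1} = a_{i+1} + (q^{-1}-1)(a_{i+1}+1) e_{a_{i+1}}$, and multiplying on the left by $t_{i+1}$ gives $a_{i+1} t_i \in t_{i+1} a_{i+1} E + t_{i+1} E$.

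For the inductive step of (1), I write $a_i t_i^{k+1} = (a_i t_i) t_i^k$ and insert the base case: the factor $(t_i E) \cdot t_i^k$ contributes $t_i^{k+1} E$ via $E t_i^k \subset t_i^k E$, while $(t_{i-1} a_i E) \cdot t_i^k$ rewrites as $t_{i-1}(a_i t_i^k) E$, to which the induction hypothesis applies. After reindexing $j \mapsto j+1$ in the resulting sum, one obtains precisely the decomposition claimed at level $k+1$. The step for (2) is strictly analogous, using $t_i t_{i+1} = t_{i+1} t_i$ to push $t_{i+1}$ past powers of $t_i$. The main obstacle in the whole argument is the explicit inversion of $1 + (q-1) e_{a_{i+1}}$ needed in the base case of (2), which is where the invertibility of the Hecke parameter $q$ in $R_W$ enters in an essential way; no such inversion is needed for (1).
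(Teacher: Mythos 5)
Your proof is correct and follows essentially the same route as the paper: the $k=1$ cases via the quadratic relation and pushing idempotents through $Eb \subset bE$, then induction on $k$ using commutativity of the $t_j$'s. The only substantive addition is your explicit inversion $a_{i+1}^{-1} = a_{i+1} + (q^{-1}-1)(a_{i+1}+1)e_{a_{i+1}}$ in the base case of (2), which the paper leaves implicit; it is the standard formula for the inverse of a Yokonuma--Hecke-type generator and correctly uses $a_{c,0}^{-1}\in R_W$.
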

\begin{proof}

We prove (1) by induction on $k \geq 1$. We first assume $k = 1$. We have $a_i t_i = a_i^2 t_{i-1}a_i = t_{i-1}a_i + (q-1)
(a_i+1) e_{a_i} t_{i-1}a_i = t_{i-1}a_i + (q-1)e_{a_i} t_{i-1}a_i + (q-1) a_i e_{a_i} t_{i-1} a_i
= t_{i-1}a_i + (q-1)t_{i-1}a_ie_{a_i^{-1}t_{i-1}^{-1}a_it_{i-1}a_i} + (q-1) a_i  t_{i-1} a_i e_{t_{i-1}^{-1}a_it_{i-1}}
= t_{i-1}a_i + (q-1)t_{i-1}a_ie_{a_i^{-1}t_{i-1}^{-1}a_it_{i-1}a_i}+ (q-1) t_i e_{t_{i-1}^{-1}a_it_{i-1}}
\in t_{i-1}a_i E +t_i E$.
Now, if $k \geq 2$, then $a_i t_i^k=a_i t_i^{k-1}t_i \in 
t_{i-1}^{k-1} a_i Et_i + \sum_{j=1}^{k-1} t_{i-1}^{j-1} t_i^{k-j} Et_i
\subset t_{i-1}^{k-1} (a_i t_i) E + \sum_{j=1}^{k-1} t_{i-1}^{j-1} t_i^{k-j}t_i E
\subset t_{i-1}^{k-1} (t_{i-1}a_i E +t_i E) E + \sum_{j=1}^{k-1} t_{i-1}^{j-1} t_i^{k+1-j} E
\subset t_{i-1}^{k} a_i E +t_{i-1}^{k-1} t_i E + \sum_{j=1}^{k-1} t_{i-1}^{j-1}j t_i^{k+1-j} E
\subset t_{i-1}^{k} a_i E + \sum_{j=1}^{k} t_{i-1}^{j-1} t_i^{k+1-j} E$ and this proves (1).

We now prove (2) by induction on $k \geq 1$. If $k=1$, then $a_{i+1} t_i a_{i+1} = t_{i+1}$
implies $a_{i+1} t_i = t_{i+1} a_{i+1}^{-1} \in t_{i+1} a_{i+1} E + t_{i+1} E$. If
$k \geq 2$, then
$a_{i+1} t_i^k = a_{i+1} t_i^{k-1} t_i 
\in t_{i+1}^{k-1} a_{i+1} E t_i +  \sum_{j=1}^{k-1} t_i^{k-1-j} t_{i+1}^j E t_i
\subset t_{i+1}^{k-1} (a_{i+1}t_i) E  +  \sum_{j=1}^{k-1} t_i^{k-1-j} t_{i+1}^j t_iE 
\subset t_{i+1}^{k-1} (t_{i+1} a_{i+1} E + t_{i+1} E) E  +  \sum_{j=1}^{k-1} t_i^{k-j} t_{i+1}^j E 
\subset  t_{i+1}^{k-1}t_{i+1} a_{i+1} E + t_{i+1}^{k-1}t_{i+1} E  +  \sum_{j=1}^{k-1} t_i^{k-j} t_{i+1}^j E 
\subset  t_{i+1}^{k} a_{i+1} E   +  \sum_{j=1}^{k} t_i^{k-j} t_{i+1}^j E 
$ and this proves (2).
\end{proof}

Since $a_2,\dots,a_n$ satisfy the braid relations in type $A_{n-1}$, by Iwahori-Matsumoto theorem
we know that, for each $g \in \mathfrak{S}_n$
there is a well-defined $a_g \in B$ such that $a_g = a_{i_1}\dots a_{i_r}$ for every
reduced decomposition $g = s_{i_1}\dots s_{i_r}$ with $s_{i_m} = (m, m-1)$.
We note that, for each $i \geq 2$, $a_i a_g \in \sum_{h \in \mathfrak{S}_n} a_h E$, as a consequence
of the corresponding inequality inside $\mathcal{C}_{\mathfrak{S}_n}(u)$.
From this we prove that
$$
\mathcal{C}_W^p = \sum_{g \in \mathfrak{S}_n} \sum_{0 \leq k_1,\dots,k_n \leq d} t_1^{k_1}\dots t_n^{k_n} a_{g} E
$$
Indeed, the RHS contains $1$ and is clearly stable by left multiplication under
\begin{itemize}
\item $a_1 = t = t_1$, by the order relation $t^d = 1 + (q-1) P(t)e_t$
\item $a_2,\dots,a_n$ by lemma \ref{lem:techAK} and the fact that $a_i a_g E \subset \sum_{h \in \mathfrak{S}_n} a_h E$ for all $i \geq 2$.
\end{itemize}
Since $E$ is spanned by $|\mathcal{W}_p|$ elements, and $|W| = d^n n!$, this
proves that the assumption of proposition \ref{prop:crit1freeness} is satisfied, and this proves conjecture \ref{conj:freeness} for $W = G(d,1,n)$.
\bigskip
\bigskip
\bigskip

\end{document}